\newtheorem{Theorem}{\indent\sc Theorem}[section]  
\newtheorem{Lemma}[Theorem]{\indent\sc Lemma}
\newtheorem{Proposition}[Theorem]{\indent\sc Proposition}
\newtheorem{Corollary}[Theorem]{\indent\sc Corollary}
\theoremstyle{definition}   
\newtheorem{Definition}[Theorem]{\indent\sc Definition} 
\theoremstyle{remark}
\newtheorem{Remark}[Theorem]{\indent\sc Remark}
\newtheorem{Example}[Theorem]{\indent\sc Example}
\numberwithin{equation}{section}
\newcommand{\R}{\mathbb R}
\newcommand{\C}{\mathbb C^{\prime}}
\newcommand{\D}{\mathbb D}
\newcommand{\Min}{\mathbb L^3}
\newcommand{\Nil}{{\rm Nil}_3}
\renewcommand{\Re}{\operatorname {Re}}
\renewcommand{\Im}{\operatorname {Im}}
\newcommand{\ip}{i^{\prime}}
\begin{document}
 \title[Timelike minimal surfaces with $Q \overline Q=0$ in $\Nil$]{Minimal null scrolls in the three-dimensional Heisenberg group}
%
 \author[H.~Kiyohara]{Hirotaka Kiyohara}
 \address{
 Department of Mathematics, Hokkaido University, 
 Sapporo, 060-0810, Japan}
 \email{kiyosannu@eis.hokudai.ac.jp}
 \subjclass[2020]{Primary~53A10; Secondary~53A35, 53C42}
 \keywords{Surfaces; minimal surfaces; Heisenberg group; timelike surfaces; null scrolls}
 \thanks{This work was supported by JST SPRING, Grant Number JPMJSP2119.}
 \date{}

\begin{abstract}
 In the three-dimensional Heisenberg group equipped with a certain left invariant Lorentzian metric,
 timelike minimal surfaces
 which have the Abresch-Rosenberg differentials
 with vanishing multiplication of the coefficient function and its para-complex conjugate
 are characterized
 as the surfaces defined by the multiplication of null curves and affine null lines.
 Moreover, the constructions of these surfaces with prescribed curvatures of null curves or prescribed null lines are given.
\end{abstract}
\maketitle   

\section{Introduction}
The left invariant Lorentzian metrics
in the three-dimensional Heisenberg group $\Nil$
are isometrically classified into three types \cite{R1992}.
In this paper we investigate timelike minimal surfaces in $\Nil$ equipped with one of them,
which is non-flat and its isometry group is of dimension $4$.

In the classical surface theory in the $3$-dimensional space forms,
a quadratic differential called the {\it Hopf differential} is a useful tool
to study constant mean curvature surfaces.
Abresch and Rosenberg \cite{AR2004, AR2005} introduced another quadratic differential
for a surface in the homogeneous $3$-spaces which have isometry groups of dimension $4$.
It is holomorphic for constant mean curvature surfaces
and known as an analogy of the Hopf differential.

In the previous study \cite{KK2022},
Kobayashi and the author gave the corresponding quadratic differential $Qdz^2$ in our setting
and called it the {\it Abresch-Rosenberg differential}
for a timelike surface in the Lorentz Heisenberg group $\Nil$ (Definition \ref{defARdiff}).
It is a para-complex number valued $(0, 2)$-tensor field.
Moreover,
in \cite{KK2022},
we determined timelike minimal surfaces with the Abresch-Rosenberg differential vanishing.

We will determine timelike minimal surfaces with the condition $Q \overline Q=0$ in this paper.
We would like to note that
vanishing the real-valued function $Q \overline Q$ does not imply $Q=0$
because of using the para-complex coordinates.
The condition $Q \overline Q=0$ proceeds from the integrability condition \eqref{integrablecondition}
for a timelike minimal surface in $\Nil$.

As is well known,
null scrolls are ruled surfaces over null curves with null director curves in the Minkowski space.
Based on these surfaces,
we define a new class of surfaces in $\Nil$,
using the exponential map of the Lie group,
in the form$:$
$\gamma(s) \cdot \exp(t \widetilde B(s))$,
and call them {\it null scrolls} in $\Nil$
(Definition \ref{defnullscroll} and Remark \ref{remnullscroll}).
Here $\gamma$ is a null curve in $\Nil$
and $\widetilde B$ is
a curve in the light cone in Lie algebra $\mathfrak{nil}_3$.
The purpose of this paper is to show that
minimal null scrolls are the only timelike minimal surfaces
satisfying $Q \overline Q=0$,
where $Qdz^2$ is the Abresch-Rosenberg differential (Theorem \ref{thmmain}).
This theorem will be expected to give a geometrical interpretation
of the Abresch-Rosenberg differential.
In order to prove this theorem,
we introduce a null curve theory in $\Nil$ by using one in the Minkowski $3$-space.
In particular,
we prove that for any two real valued functions $k_1$ and $k_2$,
there exists a framed null curve with the``curvatures'' $k_1$ and $k_2$
(Theorem \ref{curvefromcurvatures}).
Moreover,
we obtain a method to construct minimal null scrolls
from an arbitrary real valued function (Corollary \ref{Corcurvature}).

In Section \ref{sec:pre},
we will recall the basic knowledge of the three-dimensional Heisenberg group,
the para-complex structure,
and integrability conditions of timelike minimal surfaces in $\Nil$.
In Section \ref{sec:characterization},
we will introduce null scrolls,
and prove the main theorem mentioned above.
Section \ref{sec:construct} presents a more practical construction of minimal null scrolls than the one in Section \ref{sec:characterization}.
We can construct all minimal null scrolls
from prescribed rulings $\widetilde B$ and an arbitrary function of one variable (Theorem \ref{thmmain2}).
In the Appendix,
we write down the background of this study.
In particular,
a relation between minimal null scrolls in $\Nil$
and $B$-scrolls in Minkowski $3$-space is explained.


\section{Preliminaries}\label{sec:pre}
We devote this section to recalling the fundamental theorem of timelike surfaces in the Minkowski $3$-space and to explaining the basic information of timelike surfaces in the three-dimensional Heisenberg group.
The Gauss-Codazzi equations for these surfaces reveal that
a timelike minimal surface in the three-dimensional Heisenberg group induces a timelike surface of constant mean curvature $1/2$ in the Minkowski $3$-space.


\subsection{Para-complex structure}
First, we give some basic properties of the para-complex number.
Para-complex number $\C$ is a real algebra spanned by $1$ and the para-complex unit $\ip$
which satisfies the conditions$:$
\begin{equation*}
 {\ip}^2=1 \quad \text{and} \quad 1 \cdot \ip = \ip \cdot 1 = \ip.
\end{equation*}
We call the elements of $\C$ also para-complex numbers.
Similarly to complex number $\mathbb{C}$,
every $z \in \C$ can be decomposed uniquely into the real part and the imaginary part,
that is,
\begin{equation*}
 z= x +\ip y.
\end{equation*}
Moreover, the para-complex conjugate is also defined as well as the complex conjugate$:$
\begin{equation*}
 \bar z = x - \ip y.
\end{equation*}
However, the para-complex number is not a field.
In fact, there exist numbers which do not have inverse elements.
This fact is explained in the following proposition.
\begin{Proposition}\label{properties}
 For a para-complex number $z = x + \ip y \in \C$,
 following statements hold.
 \begin{enumerate}
 \item
 There exists a root $w \in \C$,
 that is,
 $z=w^2$
 if and only if
 \begin{equation*}
  x+y \geq 0 \quad \text{and} \quad x-y \geq 0. 
 \end{equation*}
 \item
 There exists a para-complex number $w$ such that $z= e^w$ if and only if
 \begin{equation*}
  x+y>0 \quad \text{and} \quad x-y>0.
 \end{equation*}
 Here the exponential is defined as an infinite series
 \begin{equation*}
  e^w=\sum_{k=0}^\infty \frac{w^k} {k!}=e^{\Re w}\left( \cosh(\Im w)+ \ip \sinh(\Im w) \right).
 \end{equation*}
 \item
 There exists an inverse number $w$,
 that is,
 $z \cdot w=1$
 if and only if
 \begin{equation*}
 x^2-y^2\neq0.
 \end{equation*}
 \end{enumerate}
\end{Proposition}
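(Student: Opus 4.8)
The plan is to reduce all three statements to an elementary fact: $\C$ is isomorphic, as a real algebra, to $\R^2$ with componentwise operations. First I would introduce the idempotents $e_{\pm}:=\tfrac12(1\pm\ip)$, which satisfy $e_+^2=e_+$, $e_-^2=e_-$, $e_+e_-=0$ and $e_++e_-=1$. Then every $z=x+\ip y$ rewrites as $z=(x+y)e_++(x-y)e_-$, and the map $\Phi\colon\C\to\R^2$, $\Phi(x+\ip y)=(x+y,\,x-y)$, is an $\R$-algebra isomorphism: additivity is immediate, and multiplicativity follows from the relations among $e_\pm$, since $(ae_++be_-)(ce_++de_-)=ac\,e_++bd\,e_-$. (For orientation one can also note that para-complex conjugation corresponds under $\Phi$ to interchanging the two coordinates, though this is not strictly needed.) Once $\Phi$ is set up, each part is a statement about $\R^2$ with componentwise structure.

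For part (1): $z=w^2$ is solvable if and only if $\Phi(z)=\Phi(w)^2=(p^2,q^2)$ for some $(p,q)\in\R^2$, i.e.\ if and only if both coordinates $x+y$ and $x-y$ of $\Phi(z)$ are nonnegative, because the image of $t\mapsto t^2$ on $\R$ is $[0,\infty)$. Note that $\Phi$ automatically delivers the ``only if'' direction as well. For part (3): $z$ has a multiplicative inverse if and only if $\Phi(z)=(x+y,x-y)$ is invertible in $\R^2$, i.e.\ if and only if both coordinates are nonzero, i.e.\ if and only if $(x+y)(x-y)=x^2-y^2\neq0$.

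For part (2) I would first record the closed formula for the exponential. Since $\C$ is commutative, $e^w=e^{\Re w}e^{\ip\,\Im w}$, and splitting $e^{\ip v}=\sum_k(\ip v)^k/k!$ into even and odd parts and using ${\ip}^2=1$ gives $e^{\ip v}=\cosh v+\ip\sinh v$; this also shows the series converges for every $w\in\C$ (alternatively one sees convergence directly by applying $\Phi$ termwise). Applying $\Phi$ and using $\cosh v\pm\sinh v=e^{\pm v}$ yields $\Phi(e^w)=(e^{\Re w+\Im w},\,e^{\Re w-\Im w})=(e^{p},e^{q})$, where $(p,q)=\Phi(w)$; conversely every $(p,q)$ is $\Phi(w)$ for some $w$. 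Hence $z=e^w$ is solvable if and only if each coordinate of $\Phi(z)$ lies in the image of the real exponential, i.e.\ $x+y>0$ and $x-y>0$.

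I do not anticipate a genuine obstacle here; the only points requiring a little care are the convergence and closed-form computation of the exponential series in part (2), and making sure the necessity (``only if'') directions are not overlooked, which the isomorphism $\Phi$ takes care of uniformly. If one wished to avoid $\Phi$ entirely, the same conclusions follow by writing $w=u+\ip v$ and solving $u^2+v^2=x$, $2uv=y$ by hand, but the idempotent decomposition makes the argument transparent and treats all three parts in one stroke.
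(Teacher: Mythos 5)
Your proof is correct, and it takes a genuinely different route from the paper's. The paper argues coordinatewise in the basis $\{1,\ip\}$: for (1) and (2) it expands $w^2$ and $e^w$ in real and imaginary parts and compares with $z$ (solving the resulting real systems by hand), and for (3) it writes the linear system $xu+yv=1$, $yu+xv=0$ for an inverse $w=u+\ip v$, eliminates to get the necessity of $x^2-y^2\neq0$, and for sufficiency reduces by multiplying with $\epsilon\in\{\pm1,\pm\ip\}$ to the sector $x+y>0$, $x-y>0$ and then invokes part (2) to write $z=e^{\widetilde w}$ and $w=e^{-\widetilde w}$. Your idempotent decomposition $z=(x+y)e_++(x-y)e_-$ with $e_\pm=\tfrac12(1\pm\ip)$ instead exhibits $\C\cong\R\times\R$ as algebras once and for all, after which all three parts collapse to the images of $t\mapsto t^2$, $t\mapsto e^t$, and $t\mapsto 1/t$ on each factor; this is more uniform, makes the ``only if'' directions automatic, dispenses with the paper's detour through the exponential in part (3), and in passing justifies the convergence and closed form of the exponential series that the paper simply asserts. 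The price is a small amount of setup (verifying that $\Phi$ is an algebra isomorphism), and the paper's direct computation is arguably closer in spirit to how the rest of the paper manipulates para-complex coordinates, but both arguments are complete and the conclusions agree.
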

\begin{proof}
 By representing $w^2$ and $e^w$ using the real part and the imaginary part of $w$,
 and comparing them with $z$,
 we can obtain the statements $(1)$ and $(2)$.
 If $z$ has a inverse number $w=u +\ip v$,
 we have a system
 \begin{equation*}
  xu+yv=1 \quad \text{and} \quad yu+xv=0.
 \end{equation*}
 Then removing $u$ or $v$, we obtain $x^2-y^2\neq0$.
 Conversely,
 if $x^2-y^2\neq0$ holds,
 multiplying $\epsilon \in \{\pm 1, \pm \ip\}$ appropriately results in case of $x+y>0$ and $x-y>0$.
 When $x+y>0$ and $x-y>0$,
 by $(2)$,
 $z$ can be represented into $e^{\widetilde w}$ for some para-complex number $\widetilde w$.
 The proof is completed by taking $w=e^{-\widetilde w}$.
\end{proof}

Let $M$ be a connected manifold
and $(\widetilde M, h)$ a Lorentzian manifold.
An immersion $f: M \to \widetilde M$ is said to be {\it timelike}
when the induced metric is Lorentzian.
If $M$ is $2$-dimensional,
a timelike immersion is said to be a {\it timelike surface}.
When $M$ is a orientable connected $2$-manifold,
the induced Lorentzian metric,
which we call the first fundamental form,
of a timelike surface $f: M \to \Nil$ determines a Lorentzian conformal structure on $M$
(for details, see \cite{W1996}).
Thus we can rewrite locally the first fundamental form $I$ as
\begin{equation*}
 I= e^u dz d\bar z = e^u (dx^2-dy^2)
\end{equation*}
using para-complex coordinates $z=x+\ip y$.
We call $e^u$ the {\it conformal factor}.
In this paper,
we use the notations $\partial$ and $\overline \partial$ as the partial differentiations with respect to the coordinates $z$ and $\bar z$,
defined formally by
\begin{equation*}
\partial = \frac{\partial}{\partial z} = \frac12 \left( \frac{\partial}{\partial x} + \ip \frac{\partial}{\partial y} \right)
\quad
\text{and}
\quad
\overline \partial = \frac{\partial}{\partial \bar z} = \frac12 \left( \frac{\partial}{\partial x} - \ip \frac{\partial}{\partial y} \right).
\end{equation*}
Furthermore,
for a timelike surface $f$,
we denote tangent vectors $df( \partial )$ and $df( \overline \partial )$
by $f_z$ and $f_{\bar z}$, respectively.


\subsection{Timelike surface theory in the $3$-dimensional Minkowski space}
The Minkowski $3$-space is a real $3$-space $\mathbb{R}^3$ equipped with the standard Lorentzian metric$:$
\[
 \langle (x_1, x_2, x_3) , (\tilde x_1, \tilde x_2, \tilde x_3) \rangle := -x_1 \tilde x_1 + x_2 \tilde x_2 + x_3 \tilde x_3.
\]
We will use the notation $\Min_{(-,+,+)}$ as the Minkowski $3$-space with the above metric in this paper.
Moreover we denote the Lorentz group by $O(2,1)$.
This group is defined as follows$:$
\[
 O(2,1)
 =
 \left\{
  X \in M_3\mathbb{R}
 \middle|
  X^\top \begin{pmatrix}-1&0&0\\0&1&0\\0&0&1\end{pmatrix} X= \begin{pmatrix}-1&0&0\\0&1&0\\0&0&1\end{pmatrix}
 \right\}.
\]
The following transformation for $X \in O(2,1)$ is called a Lorentz transformation.
\[
 \Min_{(-,+,+)} \ni (x_1, x_2, x_3) \mapsto (x_1, x_2, x_3) X \in \Min_{(-,+,+)}.
\]
The isometry group of $\Min_{(-,+,+)}$ is the Poincar\'{e} group which consists of the compositions of Lorentz transformations and parallel translations.
In particular,
orientation preserving isometries are made from translations and the special Lorentz group $SO(2,1)$$:$
\[
 SO(2,1)
 =
 \left\{
  X \in O(2,1)
 \middle|
  \det X=1
 \right\}.
\]

Let $\D \subset \C$ be a simply connected domain of $\C$
and $\Phi: \D \to \Min_{(-,+,+)}$ be a conformal immersion with the conformal factor $e^{\widetilde{u}}$.
The unit normal vector field $\widetilde{N}$ is defined by
\[
 \widetilde{N}= -\ip \frac{\Phi_z \times \Phi_{\bar z}} {|\Phi_z \times \Phi_{\bar z}|},
\]
and the second fundamental form $II$ is given by
\[
 II
 =
 \langle \Phi_{zz}, \widetilde{N} \rangle dz^2 + 2 \langle \Phi_{z\bar{z}}, \widetilde{N} \rangle dzd\bar{z} + \langle \Phi_{\bar{z}\bar{z}}, \widetilde{N} \rangle d\bar{z}^2.
\]
The mean curvature $\widetilde{H}$ and the Hopf differential $\widetilde{Q}dz^2$ of $\Phi$ are the trace of $II$ with respect to the induced metric on $\D$ and the $(2,0)$-part of $II$.
As is well known,
the Gauss-Codazzi equations for timelike surfaces are given as
\begin{equation}\label{eq:gausscodazziminkowski}
   \frac12 \widetilde{u}_{z \bar z} + \frac14 {\widetilde{H}}^2 e^{\widetilde{u}} - \widetilde{Q} \overline{\widetilde{Q}}e^{-\widetilde{u}}=0,
   \quad
   \widetilde{Q}_{\bar z}=\frac12 \widetilde{H}_z e^{\widetilde{u}}.
\end{equation}
\begin{Theorem}\label{thm:fundamentaltheoremminkowski}
 Let triplet $(\widetilde{u}, \widetilde{H}, \widetilde{Q})$ be a solution of the equation \eqref{eq:gausscodazziminkowski}.
 Then there uniquely exists a conformal immersion $\Phi: \D \to \Min_{(-,+,+)}$ which has the conformal factor $e^{\widetilde{u}}$, the mean curvature $\widetilde{H}$, and the Hopf differential $\widetilde{Q}dz^2$ up to orientation preserving isometries.
\end{Theorem}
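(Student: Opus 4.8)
The plan is to prove this as a Bonnet-type theorem by the moving-frame (Frobenius) method, exactly parallel to the Euclidean case but adapted to the Lorentzian signature and para-complex coordinates. To see why \eqref{eq:gausscodazziminkowski} is the correct integrability condition and to read off the right structure equations, I would first run the necessity direction: given a conformal immersion $\Phi$ with the stated data, form the frame $\mathcal{F}=(\Phi_z,\Phi_{\bar z},\widetilde N)$ as a map from $\D$ to $3\times 3$ matrices over $\C$. Conformality gives the Gram relations $\langle\Phi_z,\Phi_z\rangle=\langle\Phi_{\bar z},\Phi_{\bar z}\rangle=0$, $\langle\Phi_z,\Phi_{\bar z}\rangle=\tfrac12 e^{\widetilde u}$, $\langle\Phi_z,\widetilde N\rangle=\langle\Phi_{\bar z},\widetilde N\rangle=0$, $\langle\widetilde N,\widetilde N\rangle=1$, and differentiating these together with the definitions of $II$, $\widetilde H$, $\widetilde Q$ yields
\begin{align*}
 \Phi_{zz}&=\widetilde u_z\,\Phi_z+\widetilde Q\,\widetilde N, &
 \Phi_{z\bar z}&=\tfrac14\widetilde H e^{\widetilde u}\,\widetilde N,\\
 \widetilde N_z&=-\tfrac12\widetilde H\,\Phi_z-2\widetilde Q e^{-\widetilde u}\,\Phi_{\bar z},
\end{align*}
together with the para-complex conjugate equations. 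Equivalently $\mathcal{F}_z=\mathcal{F}\,\mathcal{U}$ and $\mathcal{F}_{\bar z}=\mathcal{F}\,\mathcal{V}$ for explicit matrices $\mathcal{U},\mathcal{V}$ built from $(\widetilde u,\widetilde H,\widetilde Q)$, and unwinding the compatibility condition $\mathcal{U}_{\bar z}-\mathcal{V}_z=[\,\mathcal{U},\mathcal{V}\,]$ reproduces precisely the two scalar equations of \eqref{eq:gausscodazziminkowski}.

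For existence, I would start from a solution $(\widetilde u,\widetilde H,\widetilde Q)$ of \eqref{eq:gausscodazziminkowski}, form the same $\mathcal{U},\mathcal{V}$, and observe that by the previous paragraph the para-complex valued $1$-form $\mathcal{U}\,dz+\mathcal{V}\,d\bar z$ satisfies the Maurer-Cartan-type integrability condition. Since $\D$ is simply connected, the linear system $\mathcal{F}_z=\mathcal{F}\mathcal{U}$, $\mathcal{F}_{\bar z}=\mathcal{F}\mathcal{V}$ admits a solution $\mathcal{F}$ with any prescribed value $\mathcal{F}(z_0)$; by elementary linear algebra one can choose $\mathcal{F}(z_0)$ so that its columns realize the desired Gram matrix $G_0=\bigl(\begin{smallmatrix}0&\frac12 e^{\widetilde u}&0\\ \frac12 e^{\widetilde u}&0&0\\ 0&0&1\end{smallmatrix}\bigr)$ and so that its second column is the para-complex conjugate of the first and its third column is real. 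Two facts then pin $\mathcal{F}$ down on all of $\D$: first, $G:=\mathcal{F}^\top\,\mathrm{diag}(-1,1,1)\,\mathcal{F}$ satisfies $G_z=(G_0)_z$ and $G_{\bar z}=(G_0)_{\bar z}$ by the structure equations, hence $G\equiv G_0$ (in particular $\det\mathcal{F}\neq0$ everywhere); second, if $J$ is the permutation matrix swapping the first two columns, then $\overline{\mathcal{F}}\,J$ solves the same linear system with the same initial value because $\overline{\mathcal{U}}=J\mathcal{V}J$, so $\overline{\mathcal{F}}J=\mathcal{F}$ by uniqueness in Frobenius.

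Next I would recover $\Phi$ by integrating $\Phi_z=(\text{first column of }\mathcal{F})$ and $\Phi_{\bar z}=(\text{second column of }\mathcal{F})$: the structure equations give $\partial_{\bar z}(\text{first column})=\partial_z(\text{second column})=\tfrac14\widetilde H e^{\widetilde u}\widetilde N$, so this $1$-form is closed and, $\D$ being simply connected, $\Phi$ exists and is unique up to an additive constant; the identity $\overline{\mathcal{F}}J=\mathcal{F}$ forces $\Phi_{\bar z}=\overline{\Phi_z}$, so $\Phi$ is real-valued. Then $G\equiv G_0$ says exactly that $\Phi$ is a conformal immersion with conformal factor $e^{\widetilde u}$ and that $\widetilde N$ is a unit normal field; an orientation/sign check identifies $\widetilde N$ with $-\ip\,\Phi_z\times\Phi_{\bar z}/|\Phi_z\times\Phi_{\bar z}|$, and reading off the structure equations gives $\langle\Phi_{zz},\widetilde N\rangle=\widetilde Q$ and $\langle\Phi_{z\bar z},\widetilde N\rangle=\tfrac14\widetilde H e^{\widetilde u}$, i.e.\ the prescribed Hopf differential and mean curvature.

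For uniqueness up to orientation-preserving isometry, if $\Phi,\Phi'$ are two such immersions with frames $\mathcal{F},\mathcal{F}'$, then $\mathcal{F}'\mathcal{F}^{-1}$ has vanishing $z$- and $\bar z$-derivatives, hence is a constant matrix $A$; since both frames have the same Gram matrix $G_0$ the matrix $A$ preserves $\mathrm{diag}(-1,1,1)$, and matching orientations (via the above normal formula) forces $A\in SO(2,1)$. Then $\Phi'-A\Phi$ has vanishing derivatives, so equals a constant vector $b$, and $\Phi'$ is obtained from $\Phi$ by the orientation-preserving isometry $x\mapsto Ax+b$. The only genuinely delicate points in all of this are bookkeeping ones: arranging the conjugation symmetry so that $\Phi$ comes out real (i.e.\ the correct choice of $J$ and the initial frame), and checking that the nine components of $\mathcal{U}_{\bar z}-\mathcal{V}_z=[\mathcal{U},\mathcal{V}]$ collapse to exactly the Gauss and Codazzi equations of \eqref{eq:gausscodazziminkowski} rather than to something merely equivalent modulo the Gram relations — that verification is, I expect, the main though routine obstacle.
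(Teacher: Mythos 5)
The paper offers no proof of this theorem at all: it is invoked as the classical fundamental theorem for timelike surfaces (Bonnet-type theorem), so there is nothing internal to compare your argument against. Your moving-frame/Frobenius proof is the standard one and is essentially correct, including the two points that genuinely need care in this setting: the conjugation symmetry $\overline{\mathcal{U}}=J\mathcal{V}J$ that forces $\Phi$ to be real (this identity does hold for the matrices you describe, since $c=\langle\Phi_{z\bar z},\widetilde N\rangle$ and $e^{-\widetilde u}$ are real), and the reduction of the zero-curvature condition to exactly \eqref{eq:gausscodazziminkowski}. Two small repairs. First, your normalization is off: with the paper's equations \eqref{eq:gausscodazziminkowski} the consistent convention is $\langle\Phi_{z\bar z},\widetilde N\rangle=\tfrac12\widetilde H e^{\widetilde u}$, not $\tfrac14\widetilde H e^{\widetilde u}$ — only then does the $\widetilde N$-component of $\Phi_{zz\bar z}=\Phi_{z\bar zz}$ give $\widetilde Q_{\bar z}=\tfrac12\widetilde H_z e^{\widetilde u}$ and the $\Phi_z$-component give $\tfrac12\widetilde u_{z\bar z}+\tfrac14\widetilde H^2e^{\widetilde u}-\widetilde Q\overline{\widetilde Q}e^{-\widetilde u}=0$. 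Second, since $\C$ has zero divisors (Proposition \ref{properties}), ``$\det\mathcal F\neq0$'' does not by itself make $\mathcal F$ invertible; you need $\det\mathcal F$ to be a unit. This is rescued by your own identity $G\equiv G_0$, which gives $(\det\mathcal F)^2=-\det G_0=\tfrac14e^{2\widetilde u}$, a positive real, so $\det\mathcal F$ is indeed invertible in $\C$; this should be said explicitly, both where you conclude that $\mathcal F$ is a frame and where you form $\mathcal F'\mathcal F^{-1}$ in the uniqueness step. (Alternatively one can run the whole argument with the real frame $(\Phi_x,\Phi_y,\widetilde N)$ and avoid para-complex linear algebra entirely.) With these adjustments the proof is complete.
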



\subsection{$B$-scrolls}
Let us introduce surfaces in $\Min_{(-,+,+)}$, called {\it B-scrolls},
first introduced by \mbox{L. Graves} \cite{G1979}.
$B$-scrolls are timelike ruled surfaces, ruled by the binormal vector fields of null Frenet frames along base null curves.
Precisely, defined as the following definition$:$
\begin{Definition}
 Let $\gamma=(\gamma_1, \gamma_2, \gamma_3)$ be a null curve in $\Min_{(-,+,+)}$ defined on an open interval.
 Then $\gamma$ with an appropriate parameter $s$ has a frame $(A, B, C)$ which satisfies
 \begin{align*}
  A&= \gamma^{-1} \frac{d\gamma}{ds},\quad 
  \langle A, B \rangle = \langle C, C \rangle =1,\notag\\
  \langle A, A \rangle &= \langle B, B \rangle = \langle A, C \rangle = \langle B, C \rangle =0,\label{nullframe}\\
  (A', B&', C')
  =
  (A, B, C)\begin{pmatrix} 0&0&-k_2\\0&0&-k_1\\k_1&k_2&0\end{pmatrix}\notag.
 \end{align*}
 The frame $(A, B, C)$ is called a {\it null Frenet frame} of $\gamma$,
 and $k_1, k_2$ the {\it first curvature} and the {\it second curvature} of $\gamma$ with respect to $(A, B, C)$.
 The ruled surface defined as the following formula is called a $B$-scroll of $\gamma$$:$
 \begin{equation*}
  \Phi(s, t)=\gamma(s) + t B(s).
 \end{equation*}
 The null curve $\gamma$ is called a {\it base curve} of a $B$-scroll $\Phi$.
\end{Definition}
\begin{Remark}\label{rem:B-scrolls}
$(1)$ \indent
The structure of $B$-scroll is preserved under the isometries
because a null frame is preserved under the isometries of $\Min_{(-,+,+)}$.
Precisely,
for a null curve $\gamma$ with a null Frenet frame $(A, B, C)$ and curvatures $k_1, k_2$ and an isometry $\Psi: \bm{x} \mapsto \bm{x} F_0 + \bm{b}$ of $\Min_{(-,+,+)}$,
the transformed curve $\Psi \circ \gamma$ is null and has a null Frenet frame $F_0 (A, B, C)$ and curvatures $k_1, k_2$.\\
$(2)$ \indent
Since a null Frenet frame of a null curve depends on the choice of a screen bundle,
$B$-scrolls are not unique for a base null curve.
For details about screen bundles, see \cite{DJ2007} or subsection \ref{sec:nullframe}.
The mean curvature of a $B$-scroll is given by the second curvature $k_2$ of the base curve $\gamma$ with respect to a fixed screen bundle.\\
$(3)$ \indent
It is known that the Hopf differential $\widetilde Qdz^2$ of $B$-scrolls satisfies $Q \overline{\widetilde{Q}} =0$.
Moreover, timelike surfaces with $Q \overline{\widetilde{Q}}=0$ and $Q \neq 0$ are only $B$-scrolls \cite{C2012, FI2003, SGP2019}.
\end{Remark}


\subsection{$3$-dimensional Heisenberg group $\Nil$}
The $3$-dimensional Heisenberg group $\Nil$ is a linear Lie group
consisting of $3\times 3$ upper-triangular matrices
which have the diagonal components $1$,
\begin{equation*}
 \Nil = 
 \left\{
   \begin{pmatrix}
     1 & x_1 & x_3 + \frac12 x_1 x_2\\
     0 & 1 & x_2\\
     0 & 0 & 1
   \end{pmatrix}
 \middle|
   x_1, x_2, x_3 \in \R
 \right\}.
\end{equation*}
It can be considered as $(\R^3, \cdot)$ where $\cdot$ denotes the group structure defined by
\begin{equation*}
 (x_1, x_2, x_3) \cdot (\tilde {x}_1, \tilde {x}_2, \tilde {x}_3)
 =
 \left( x_1+ \tilde x_1, x_2 + \tilde x_2, x_3 + \tilde x_3 + \frac12 (x_1 \tilde x_2 - x_2 \tilde x_1) \right).
\end{equation*}
Let $\mathfrak{nil}_3$ denote the Lie algebra for $\Nil$.
Then $\mathfrak{nil}_3$ is $\R^3$ endowed with the Lie bracket $[\ ,\  ]$ which satisfies
\begin{equation*}
 [e_1, e_2] = e_3, \quad [e_2, e_3] = [e_3, e_1] = 0,
\end{equation*}
for the standard basis $\{e_1, e_2, e_3 \}$ of $\R^3$.
The exponential map $\exp: \mathfrak{nil}_3 \to \Nil$ is a diffeomorphism expressed by
\begin{equation*}
 \exp (x_1e_1 + x_2e_2 + x_3e_3) = (x_1, x_2, x_3).
\end{equation*}
As is well known,
$3$-dimensional Heisenberg group has the left invariant Riemannian metric
\begin{equation*}
 g_R= d{x_1} \otimes d{x_1} + d{x_2} \otimes d{x_2} + \omega \otimes \omega
 \quad \text{where} \quad
 \omega= dx_3 + \frac12 (x_2 dx_1 -x_1 dx_2).
\end{equation*}
On the other hand, three types of left invariant Lorentzian metrics on $\Nil$ exist in terms of isometrical classifications \cite{R1992}.
In particular,
non-flat left invariant Lorentzian metrics are given as
\begin{equation*}
 g_{\pm} = \mp d{x_1} \otimes d{x_1} + d{x_2} \otimes d{x_2} \pm \omega \otimes \omega.
\end{equation*}
Then an orthonormal basis $\{ E_1, E_2, E_3 \}$
of each tangent space of $\Nil$
with respect to $g_R$, $g_{+}$ or $g_{-}$
can be obtained
with the left invariant vector fields which correspond to $e_1$, $e_2$, and $e_3$$:$
\begin{equation*}
 E_1 = \frac{\partial}{\partial x_1} - \frac{x_2}2 \frac{\partial}{\partial x_3},
 \quad
 E_2 = \frac{\partial}{\partial x_2} + \frac{x_1}2 \frac{\partial}{\partial x_3},
 \quad \text{and} \quad
 E_3 = \frac{\partial}{\partial x_3}.
\end{equation*}
\begin{Remark}
 The $1$-form $\omega$ is a contact form on $\Nil$
 and then $(\Nil, \omega)$ becomes a contact manifold.
 For the contact manifold $(\Nil, \omega)$,
 the Reeb vector field is given by $E_3$.
\end{Remark}
The volume element with respect to the metric $g_R$, $g_{+}$ or $g_{-}$
is given by $\pm dx_1 \wedge dx_2 \wedge dx_3$.
We then orientate $\Nil$ for the volume form to be $dx_1 \wedge dx_2 \wedge dx_3$.
Moreover, we define the vector product $\times$ as
\begin{equation*}
 g(X \times Y, Z) = dx_1 \wedge dx_2 \wedge dx_3 (X, Y, Z),
 \quad
 \text{for}
 \quad
 X, Y, Z \in \Gamma(T\Nil)
\end{equation*}
where $g=g_R$, $g_{+}$ or $g_{-}$.

In this paper,
we assign the left invariant Lorentzian metric
\begin{equation*}
 g_{+} = - d{x_1} \otimes d{x_1} + d{x_2} \otimes d{x_2} + \omega \otimes \omega
\end{equation*}
on $\Nil$,
and denote $g_+$ as simply $g$.
Note that the vector field $E_1$ supplies a timelike vector
and the vector fields $E_2$ and $E_3$ supply spacelike vectors at each point.

Let $\nabla$ denote the Levi-Civita connection for the left invariant Lorentzian metric $g$,
and then the connection $\nabla$ is defined as
\begin{equation*}\begin{matrix}
 \nabla _{E_1} E_1 = 0,                   & \nabla _{E_1} E_2 = \frac12 E_3, & \nabla _{E_1} E_3 = -\frac12 E_2,\vspace{2pt}\\
 \nabla _{E_2} E_1 = -\frac12 E_3, & \nabla _{E_2} E_2 = 0,                 & \nabla _{E_2} E_3 = -\frac12 E_1,\vspace{2pt}\\
  \nabla _{E_3} E_1 = -\frac12 E_2, &\nabla _{E_3} E_2 = -\frac12 E_1,   & \nabla _{E_3} E_3 = 0. 
\end{matrix}\end{equation*}


\subsection{Integrability conditions of timelike minimal surfaces}
Let $f: \D \to \Nil$ be a conformal immersion
from a simply connected domain $\D \subset \C$ into the $3$-dimensional Heisenberg group $\Nil$,
$N$ be the unit normal vector field,
$z$ be a conformal coordinate on $\D$,
and $e^u$ be the conformal factor of $f$.
Then the {\it support function} $h$ of $f$ is defined by
\[ h=-e^{u/2} g(N, E_3). \]
When the support function $h$ vanishes
the vector field $E_3$ is tangent to the surface.
Therefore timelike surfaces with $h$ vanishing identically are part of the inverse image
of plane curves by the natural projection $\pi: \Nil \to \R^2$ that extracts the first and second components
\begin{equation*}
 \pi(x_1, x_2, x_3)=(x_1, x_2).
\end{equation*}
These surfaces are called {\it Hopf cylinders}.
We call a timelike minimal surface {\it non-vertical} if the support function $h$ does not vanish anywhere.

In \cite{AR2005} Abresch and Rosenberg introduce a quadratic differential,
called the {\it Abresch-Rosenberg differential},
into surfaces in Thurston's model manifolds.
It is known as an analogy of the Hopf differential of surfaces in space forms.
Even in the case of timelike surfaces in $\Nil$,
the Abresch-Rosenberg differential can be defined in a similar way as in the positive definite case.
We will denote the components of $f_z$ by $\phi^j$ $(j=1, 2, 3)$,
that is,
\[ f_z = \sum_{j=1}^3 \phi^j E_j.\]
\begin{Definition}\label{defARdiff}
 The Hopf differential for $f$ is given by the $(2,0)$-part of the second fundamental form,
 that is,
 \begin{equation*}
  \widetilde Q = g(\nabla_{\partial} f_z, N),
 \end{equation*}
 and define a para-complex valued function $Q$ by
 \begin{equation*}
  Q= \frac{2H-\ip}4 \widetilde Q - \frac{(\phi^3)^2}4,
 \end{equation*}
 where the function $H$ denotes the mean curvature of $f$.
 The quadratic differential $Qdz^2$ is well-defined,
 and then it is called the Abresch-Rosenberg differential for a timelike surface $f$.
\end{Definition}
The Integrability conditions of non-vertical timelike minimal surfaces are denoted
by using the support function and the Abresch-Rosenberg differential$:$
\begin{equation}\label{integrablecondition}
 \frac12 w_{z \bar z} + e^w -Q \overline Q e^{-w}=0
 \quad \text{and} \quad
 Q_{\bar z}=0.
\end{equation}
Here,
the real valued function $w$ is defined by $|h|=4e^{w/2}$.
For a solution $(w, Q)$ of \eqref{integrablecondition},
there exists a timelike minimal surface
which has the support function $h$ with $|h|=4 e^{w/2}$ and the Abresch-Rosenberg differential $Qdz^2$.
It is known that there exists a relation between timelike minimal surfaces in $\Nil$
and timelike constant mean curvature $1/2$ surfaces in $\Min_{(-,+,+)}$.
Indeed, the integrability conditions \eqref{integrablecondition} of a timelike minimal surface in $\Nil$ coincide with
the Gauss-Codazzi equations \eqref{eq:gausscodazziminkowski} for a timelike constant mean curvature $1/2$ surface in $\Min_{(-,+,+)}$
when the product of the Abresch-Rosenberg differential by $4$
and the square of the support function
denote the Hopf differential and the conformal factor of a surface in $\Min_{(-,+,+)}$$:$
\[
   4Q=\widetilde Q, \quad h^2=e^{\widetilde{u}}.
\]
Therefore a timelike minimal surface in $\Nil$
with the Abresch-Rosenberg differential $Qdz^2$
and the support function $h$ induces a timelike surface in $\Min_{(-,+,+)}$
with the constant mean curvature $1/2$,
the first fundamental form $h^2 dz d\bar z$,
and the Hopf differential $4Qdz^2$.
These surfaces have the following relations$:$
If we take a timelike minimal surface $f$ in $\Nil$ with the derivative $f_z=\phi^1E_1 + \phi^2E_2 + \phi^3E_3$,
then the induced surface $\Phi$ in $\Min_{(-,+,+)}$ has the derivative $\Phi_z=(\phi^1, \phi^2, \ip \phi^3)$.
Moreover,
the normal Gauss map of $f$ coincides with
the Gauss map of $\Phi$.
Here the normal Gauss map is a composition of the left-translated unit normal vector field, stereographic projection of the de-Sitter sphere in $\mathfrak{nil}_3$, the operation to switch the component of the real part and imaginary part of para-complex numbers, and the inverse map of the stereographic projection of the de-Sitter sphere in $\Min_{(-,+,+)}$.
I would like to want you to see \cite{KK2022} for details of the integrability conditions of timelike surfaces in $\Nil$ and the above relations.
Then these relations derive the following theorem.
\begin{Theorem}\label{thmnonuniqueness}
 Let $f_1$ and $f_2$ be timelike minimal surfaces in $\Nil$,
 $h_j$ be the support functions,
 and $Q_jdz^2$ be the Abresch-Rosenberg differentials of $f_j$ for $j=1, 2$.
 If $h_1= \pm h_2$ and $Q_1= Q_2$ hold,
 then the derivatives ${f_1}_z=\sum\phi_1^kE_k$ and ${f_2}_z= \sum \phi_2^kE_k$ satisfy
 \begin{equation}\label{nonuniqueness}
  ( \phi_2^1, \phi_2^2, \ip \phi_2^3)
   =
   (\phi_1^1, \phi_1^2, \ip \phi_1^3) F_0
 \end{equation}
 for some matrix $F_0 \in SO(2,1)$.
 Conversely,
 if $f_1$ and $f_2$ satisfy \eqref{nonuniqueness},
 then $h_1=\pm h_2$ and $Q_1= Q_2$ hold.
\end{Theorem}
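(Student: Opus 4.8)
The plan is to reduce the statement to the Minkowski $3$-space via the correspondence, recalled above, between non-vertical timelike minimal surfaces in $\Nil$ and timelike surfaces of constant mean curvature $1/2$ in $\Min_{(-,+,+)}$, and then to appeal to the fundamental theorem of timelike surfaces, Theorem \ref{thm:fundamentaltheoremminkowski}. Write $\Phi_j\colon\D\to\Min_{(-,+,+)}$ for the timelike surface induced by $f_j$, so that $\Phi_{j,z}=(\phi_j^1,\phi_j^2,\ip\phi_j^3)$, the conformal factor of $\Phi_j$ is $e^{\widetilde{u}_j}=h_j^2$, the mean curvature is $\widetilde{H}_j\equiv 1/2$, and the Hopf differential is $\widetilde{Q}_j\,dz^2=4Q_j\,dz^2$. (We may assume $f_1$ and $f_2$ are non-vertical, since these are the surfaces for which the integrability conditions \eqref{integrablecondition} and the above dictionary are available.)

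For the direct implication, suppose $h_1=\pm h_2$ and $Q_1=Q_2$. Then $e^{\widetilde{u}_1}=h_1^2=h_2^2=e^{\widetilde{u}_2}$, so $\widetilde{u}_1=\widetilde{u}_2$; also $\widetilde{Q}_1=4Q_1=4Q_2=\widetilde{Q}_2$; and $\widetilde{H}_1\equiv 1/2\equiv\widetilde{H}_2$. Hence the triplets $(\widetilde{u}_j,\widetilde{H}_j,\widetilde{Q}_j)$ coincide, so they are the same solution of the Gauss--Codazzi system \eqref{eq:gausscodazziminkowski}. By Theorem \ref{thm:fundamentaltheoremminkowski}, $\Phi_1$ and $\Phi_2$ differ by an orientation preserving isometry of $\Min_{(-,+,+)}$; since such isometries are compositions of translations and elements of $SO(2,1)$, we have $\Phi_2=\Phi_1F_0+\bm{b}$ for some $F_0\in SO(2,1)$ and $\bm{b}\in\R^3$. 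Differentiating in $z$ yields $\Phi_{2,z}=\Phi_{1,z}F_0$, that is $(\phi_2^1,\phi_2^2,\ip\phi_2^3)=(\phi_1^1,\phi_1^2,\ip\phi_1^3)F_0$, which is \eqref{nonuniqueness}.

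For the converse, assume \eqref{nonuniqueness}, i.e. $\Phi_{2,z}=\Phi_{1,z}F_0$ with $F_0\in SO(2,1)$. Since each $\Phi_j$ is $\R^3$-valued we have $\Phi_{j,\bar z}=\overline{\Phi_{j,z}}$ entrywise, and since $F_0$ is real, conjugating gives $\Phi_{2,\bar z}=\Phi_{1,\bar z}F_0$ as well; therefore $d\Phi_2=(d\Phi_1)F_0$ on the simply connected domain $\D$, whence $\Phi_2=\Phi_1F_0+\bm{b}$ for a constant $\bm{b}$. As $F_0\in SO(2,1)$, the map $\bm{x}\mapsto\bm{x}F_0+\bm{b}$ is an orientation preserving isometry of $\Min_{(-,+,+)}$, and such an isometry preserves the conformal factor and the Hopf differential: using $(uF_0)\times(vF_0)=(u\times v)F_0$ for $F_0\in SO(2,1)$ one finds $\widetilde{N}_2=\widetilde{N}_1F_0$, hence $\langle\Phi_{2,zz},\widetilde{N}_2\rangle=\langle\Phi_{1,zz},\widetilde{N}_1\rangle$ and $e^{\widetilde{u}_2}=e^{\widetilde{u}_1}$. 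Translating back through the dictionary, $h_2^2=h_1^2$ and $4Q_2=4Q_1$, so $Q_1=Q_2$; moreover, non-verticality makes $h_1/h_2$ a continuous $\{\pm1\}$-valued function on the connected set $\D$, hence constant, so $h_1=\pm h_2$.

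I do not anticipate a substantive obstacle: once the passage to $\Min_{(-,+,+)}$ is set up, the theorem is essentially a corollary of the fundamental theorem of timelike surfaces there. The two places that need a little care are the para-complex conjugation step that promotes the identity $\Phi_{2,z}=\Phi_{1,z}F_0$ to $d\Phi_2=(d\Phi_1)F_0$ in the converse, and the verification that the pointwise ambiguity in $h_1=\pm h_2$ is resolved to a global sign, for which non-verticality of the surfaces is used.
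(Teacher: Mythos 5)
Your proposal is correct and follows essentially the same route as the paper: pass to the induced constant mean curvature $1/2$ surfaces $\Phi_j$ in $\Min_{(-,+,+)}$, apply Theorem \ref{thm:fundamentaltheoremminkowski} in both directions, and translate back through the dictionary $4Q=\widetilde Q$, $h^2=e^{\widetilde u}$. The extra details you supply (promoting $\Phi_{2,z}=\Phi_{1,z}F_0$ to $d\Phi_2=(d\Phi_1)F_0$ by para-complex conjugation, and fixing the global sign in $h_1=\pm h_2$ by connectedness) are points the paper leaves implicit, and they are handled correctly.
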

\begin{proof}
 Let $f_1$ and $f_2$ be timelike minimal surfaces in $\Nil$
 with the same or multiplied by $-1$ support function $h$ and the same Abresch-Rosenberg differential $Qdz^2$.
 Then the surfaces $\Phi_1$ and $\Phi_2$ induced from $f_1$ and $f_2$
 have the same metric $h^2dzd\bar z$ and the same Hopf differential $4Qdz^2$.
 Therefore Theorem \ref{thm:fundamentaltheoremminkowski} shows that
 there exists an orientation preserving isometry $\Psi$ of $\Min_{(-,+,+)}$ such that $\Phi_2= \Psi \circ \Phi_1$.
 This implies that \eqref{nonuniqueness} holds for some $F_0 \in SO(2,1)$.

 Conversely we assume that timelike minimal surfaces $f_1$ with $h_1, Q_1$ and $f_2$ with $h_2, Q_2$ satisfy \eqref{nonuniqueness} for some $F_0 \in SO(2,1)$.
 Then the induced surfaces $\Phi_1$ and $\Phi_2$ in $\Min$ are same up to orientation preserving isometries of $\Min_{(-,+,+)}$.
 Therefore $\Phi_1$ and $\Phi_2$ have the same first fundamental form and the same Hopf differential by Theorem \ref{thm:fundamentaltheoremminkowski}.
 The relation between timelike minimal surfaces in $\Nil$ and timelike constant mean curvature $1/2$ surfaces in $\Min_{(-,+,+)}$
 shows $ h_2 = \pm h_1$ and $Q_2= Q_1$.
\end{proof}
\begin{Remark}
 A Weierstrass type representation of timelike minimal surfaces in $\Nil$ is studied in \cite{KK2022}.
 The Weierstrass data of this representation is determined by the Abresch-Rosenberg differential and the support function.
 We can construct timelike minimal surfaces by integrating the Weierstrass data,
 and applying the integral to the loop group decomposition theorem and the Sym-Bobenko type representation.
 In general,
 timelike minimal surfaces corresponding to different initial conditions of the integrals of a Weierstrass data
 have the same or multiplied by $-1$ support function and the same Abresch-Rosenberg differential
 but they are not isometric (see \cite[Remark $5.5$]{KK2022}).
\end{Remark}

When the Abresch-Rosenberg differential satisfies $Q \overline Q=0$,
the first equation of the conditions \eqref{integrablecondition} becomes a hyperbolic type Liouville equation.
The exact solutions of Liouville equations are well-studied in \cite{BMV1994, CK1989, C1997, L1853, P1993}.
%


\section{Characterization of timelike minimal surfaces with $Q \overline Q=0$}\label{sec:characterization}
In this section,
we will characterize the timelike minimal surfaces with $Q \overline Q=0$.
We will call these surfaces {\it minimal null scrolls}
since they are considered as a generalization of ruled surfaces over null curves with null director curves in the Minkowski space,
called null scrolls.
For a null curve $\gamma$ in $\Nil$,
we will call the null vector field
$\gamma^{-1} \frac{d\gamma}{ds} =A=\sum_{i=1}^3 A^ie_i$
the {\it velocity} of $\gamma$.
Here the notation $\gamma^{-1}$ denotes the left translation from $T_{\gamma(\cdot)} \Nil$ to $\mathfrak{nil}_3$.


\subsection{Minimal surfaces in $\Nil$ which induce $B$-scrolls in $\Min_{(-,+,+)}$}\label{subsec:Bscrollnullscroll}
First, we will derive the timelike minimal surfaces which induces $B$-scrolls of constant mean curvature $1/2$ in this subsection.
From now on set $k_2= 1/2$.
Moreover for simplicity, we assume that curves in $\Min_{(-,+,+)}$ pass the origin at time $0$, that is, a curve $\gamma(s)$ satisfies $\gamma(0)=(0, 0, 0)$.

For any $B$-scroll $\Phi(s, t) = \gamma(s) + tB(s)$, a conformal coordinate $z= l x+\bar l y$ can be given by
\begin{equation*}
 s=x,
 \quad
 t=\frac{1}{x/8 + 1/y}.
\end{equation*}
Then a direct computation gives us the derivative $\Phi_z=(\phi_1, \phi_2, \ip\phi_3)$ as
\begin{equation*}
 \phi_1 = l (A^1(s) +t{B^1}'(s) - \frac{t^2}8 B^1(s)) +\bar l \frac{t^2}{y^2} B^1(s),
\end{equation*}
\begin{equation*}
 \phi_2 = l (A^2(s) +t{B^2}'(s) - \frac{t^2}8 B^2(s)) +\bar l \frac{t^2}{y^2} B^2(s),
\end{equation*}
\begin{equation*}
 \phi_3 = l (A^3(s) +t{B^3}'(s) - \frac{t^2}8 B^3(s)) -\bar l \frac{t^2}{y^2} B^3(s).
\end{equation*}
Here $A=(A^1, A^2, A^3)$ and $B=(B^1, B^2, B^3)$.
\begin{Lemma}\label{lemclosedform}
 \begin{equation}\label{closedform}
 2\Re\left( (\phi_1, \phi_2, \phi_3- \frac{\phi_1}2 \int_0^z \Re(\phi_2 dz) + \frac{\phi_2}2 \int_0^z \Re(\phi_1 dz)) dz \right)
 \end{equation}
 is a closed form.
\end{Lemma}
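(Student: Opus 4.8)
The plan is to verify directly that the $1$-form in \eqref{closedform} is closed, i.e. that its exterior derivative vanishes. Write $\alpha = 2\Re\big( (\phi_1,\phi_2,\phi_3 - \tfrac{\phi_1}{2}\int_0^z \Re(\phi_2\,dz) + \tfrac{\phi_2}{2}\int_0^z \Re(\phi_1\,dz))\,dz\big)$, a triple of real $1$-forms on $\D$. Since $\D \subset \C$ is two-dimensional, closedness of a $1$-form $2\Re(\psi\,dz)$ with $\psi$ para-complex valued is equivalent to the scalar condition $\overline\partial\psi = \partial\overline\psi$, i.e. $\Im(\overline\partial\psi)=0$ when $\psi$ is suitably expressed — more concretely, $d(2\Re(\psi\,dz)) = 2\Re(\overline\partial\psi\, d\bar z\wedge dz) = (\text{something})\,dx\wedge dy$, so the form is closed iff $\overline\partial \psi - \partial\overline\psi = 0$. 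So the task reduces to three scalar identities, one for each component.

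First I would handle the first two components $\phi_1$ and $\phi_2$: closedness of $2\Re(\phi_j\,dz)$ amounts to $\overline\partial\phi_j = \partial\overline\phi_j$. Here I would use that the surface $\Phi = \gamma\cdot\exp(t\widetilde B)$ (equivalently the $B$-scroll data written in the conformal coordinate $z$) is an immersion into $\Nil \cong \R^3$ as a set, so that its first two Euclidean coordinate functions $x_1(z,\bar z)$, $x_2(z,\bar z)$ are genuine real-valued functions on $\D$; then $\phi_1 = \partial x_1$ and $\phi_2 = \partial x_2$ by the explicit formulas given just before the lemma (the $\phi_j$ are precisely the $z$-derivatives of the coordinate functions since $E_1,E_2$ act on $x_1,x_2$ as $\partial/\partial x_1,\partial/\partial x_2$). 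Consequently $2\Re(\phi_j\,dz) = dx_j$ for $j=1,2$, which is manifestly exact, hence closed. This also identifies $\int_0^z \Re(\phi_j\,dz)$ with $x_j - x_j(0) = x_j$ (using the normalization $\gamma(0)=0$), a fact I will need for the third component.

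The third, and main, component is $\phi_3 - \tfrac{\phi_1}{2}\int_0^z\Re(\phi_2\,dz) + \tfrac{\phi_2}{2}\int_0^z\Re(\phi_1\,dz)$. Using the previous step this equals $\phi_3 - \tfrac{1}{2}(\phi_1 x_2 - \phi_2 x_1)$. Now recall that in $\Nil$ the contact form is $\omega = dx_3 + \tfrac12(x_2\,dx_1 - x_1\,dx_2)$, and the $E_3$-component of the derivative is read off via $\omega$. Since $\phi_3 = \ip^{-1}$ times the $E_3$-component — more precisely, from $\Phi_z = (\phi_1,\phi_2,\ip\phi_3)$ and the definition of the $E_i$ one has that the pullback of $\omega$ satisfies $\Phi^*\omega = 2\Re\big((\phi_3^{\Nil})\,dz\big)$ where $\phi_3^{\Nil}$ is the genuine $E_3$-component, and $\phi_3^{\Nil} = \partial x_3 + \tfrac12(x_2\partial x_1 - x_1\partial x_2) = \partial x_3 + \tfrac12(x_2\phi_1 - x_1\phi_2)$. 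A short bookkeeping with the para-complex factor $\ip$ then shows that the bracketed expression $\phi_3 - \tfrac12(\phi_1 x_2 - \phi_2 x_1)$ equals $\partial x_3$ up to an overall constant/para-complex factor, so that $2\Re$ of it times $dz$ is (a constant multiple of) $dx_3$, again exact. Alternatively, and perhaps more robustly, I would just compute $\overline\partial$ of the bracketed quantity and $\partial$ of its conjugate and check they agree, using $\overline\partial\phi_j = \partial\overline\phi_j$ from the first step together with $\partial(\int_0^z\Re(\phi_j\,dz)) = \phi_j$ and $\overline\partial(\int_0^z\Re(\phi_j\,dz)) = \overline\phi_j$; the cross terms cancel by the product rule, leaving exactly the closedness of $2\Re(\phi_3\,dz)$, which in turn holds because $x_3$ is a single-valued coordinate function on $\D$.

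The step I expect to be the main obstacle is the third component: one must keep careful track of the para-complex unit $\ip$ when passing between the $E_3$-component of $f_z$ in $\Nil$ and the third slot $\ip\phi_3$ of $\Phi_z$ in $\Min_{(-,+,+)}$, and verify that the ``correction terms'' $-\tfrac{\phi_1}{2}\int\Re(\phi_2\,dz) + \tfrac{\phi_2}{2}\int\Re(\phi_1\,dz)$ are exactly what is needed to turn $\Phi^*(\text{third coordinate differential})$ into the pullback of the contact form $\omega$ (equivalently, into $dx_3$). Once the correction is recognized as the primitive of $\tfrac12(x_2\,dx_1 - x_1\,dx_2)$, closedness is immediate. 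The first two components are essentially bookkeeping.
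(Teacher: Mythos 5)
Your handling of the first two components is correct and matches what is needed: $\phi_1,\phi_2$ are the $z$-derivatives of the real, single-valued first two coordinates of the $B$-scroll $\Phi$ in $\Min_{(-,+,+)}$, so $2\Re(\phi_j\,dz)$ is exact and $\int_0^z\Re(\phi_j\,dz)=\gamma_j+tB^j$. The gap is in the third component, which is where the entire content of the lemma sits. First, your appeal to the $\Nil$-coordinate $x_3$ is circular: the map into $\Nil$ is exactly what this lemma is needed to construct, so no such single-valued function exists yet. Second, even reading $x_3$ as the third Minkowski coordinate $\Phi^3$, the relation is $\ip\phi_3=\partial\Phi^3$, hence $2\Re(\phi_3\,dz)=\Phi^3_y\,dx+\Phi^3_x\,dy$ is the \emph{conjugate} differential of $\Phi^3$, not $d\Phi^3$; its exterior derivative equals $(\Phi^3_{xx}-\Phi^3_{yy})\,dx\wedge dy=4\,\partial\overline\partial\Phi^3\,dx\wedge dy$, which is not zero for a general conformal immersion. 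Third, the cross terms in your ``more robust'' computation do not cancel completely: writing $\psi$ for the third entry and carrying out the product rule (using $\partial F_j=\phi_j$, $\overline\partial F_j=\overline{\phi_j}$ and $\partial\overline{\phi_j}=\overline\partial\phi_j$ for $j=1,2$) leaves
\begin{equation*}
\partial\overline\psi-\overline\partial\psi
=\bigl(\partial\overline{\phi_3}-\overline\partial\phi_3\bigr)+\phi_1\overline{\phi_2}-\phi_2\overline{\phi_1}
=2\,\partial\overline{\phi_3}+\phi_1\overline{\phi_2}-\phi_2\overline{\phi_1},
\end{equation*}
and the commutator term $\phi_1\overline{\phi_2}-\phi_2\overline{\phi_1}$ is, up to a factor, the third component of $\Phi_z\times\Phi_{\bar z}$, i.e.\ essentially $e^{\widetilde u}\widetilde N^3$, which does not vanish for a non-vertical surface.

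What is actually true, and what your argument must establish, is that these two individually nonzero quantities cancel: the identity $2\partial\overline{\phi_3}+\phi_1\overline{\phi_2}-\phi_2\overline{\phi_1}=0$ follows from the structure equation $\Phi_{z\bar z}\parallel\widetilde N$ with $\langle\Phi_{z\bar z},\widetilde N\rangle$ determined by the mean curvature, combined with $\widetilde N=-\ip\,\Phi_z\times\Phi_{\bar z}/|\Phi_z\times\Phi_{\bar z}|$ and the fact that the $B$-scroll with $k_2=1/2$ has constant mean curvature $\widetilde H=1/2$. This is the one nontrivial verification in the proof; it cannot be replaced by single-valuedness of a coordinate function, and your proposal as written would conclude closedness of a form that is in fact not closed component-by-component in the way you claim.
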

\begin{proof}
 Exterior derivative of \eqref{closedform} can be computed as
 \begin{equation*}
  (0, 0, 2 \partial \overline{\phi_3} + \phi_1 \overline{\phi_2} - \phi_2 \overline{\phi_1}) dz \wedge d\bar z.
 \end{equation*}
 We can check that this $2$-form vanishes by using the structure equations of the surface.
\end{proof}
By Lemma \ref{lemclosedform} and the Stokes' theorem,
the $\Nil$-valued map
\begin{equation*}
f(z,\bar z)
=(f_1, f_2, f_3)
=\int_0^z \Re\left( (\phi_1, \phi_2, \phi_3- \frac{\phi_1}2 \int_0^z \Re(\phi_2 dz) + \frac{\phi_2}2 \int_0^z \Re(\phi_1 dz)) dz \right)
\end{equation*}
is well-defined.
Moreover we can easily see that 
\begin{equation*}
 f_1= \int_0^z \Re(\phi_1 dz) =\gamma_1(s) + t B^1(s)
 \quad \text{and} \quad
 f_2= \int_0^z \Re(\phi_2 dz) =\gamma_2(s) + t B^2(s).
\end{equation*}
Let us represent $f_3=\int_0^z \Re\left((\phi_3- \frac{\phi_1}2 f_2 + \frac{\phi_2}2 f_1) dz\right)$ in terms of the coordinate $(s, t)$.
By the well-definedness of $f_3$,
we can choose an integral path from $0$ to $z= ls + \bar l /(-s/8 + 1/t)$ as
\begin{equation*}
 \Gamma_s: z(\tilde s)= l \tilde s, \quad 0 \leq \tilde s \leq s,
\end{equation*}
\begin{equation*}
 \Gamma_t: z(\tilde t)= ls + \bar l \frac{1}{-s/8 + 1/{\tilde t}}, \quad 0 \leq \tilde t \leq t.
\end{equation*}
A straightforward computation shows $f_3$ can be represented as
\begin{equation*}
 f_3(z, \bar z)
 =
 \gamma_3(s) +\int_0^s \left(- \frac{\gamma_2}2 A^1 + \frac{\gamma_1}2 A^2\right) ds
 +
 t\left(-B^3(s) -\frac{\gamma_2(s)}2 B^1(s) + \frac{\gamma_1(s)}2 B^2(s)\right).
\end{equation*}
Let us consider the maps
$\widetilde \gamma=(\gamma_1, \gamma_2, \gamma_3 + \int_0^s \left(- \frac{\gamma_2}2 A^1 + \frac{\gamma_1}2 A^2\right) ds)$
and $f=(f_1, f_2, f_3)$ as a curve in $\Nil$ and a map into $\Nil$, respectively.
Then the curve $\widetilde \gamma$ becomes a null curve since the velocity is given by
${\widetilde \gamma}^{-1} d\widetilde \gamma/ds = A^1e_1 + A^2e_2 + A^3e_3$,
and $f$ can be represented as
\begin{equation}\label{eq:B-scrolltypeminimal}
 f(z, \bar z)= \widetilde \gamma(s) \cdot \exp(t(B^1e_1 +B^2e_2- B^3e_3)).
\end{equation}


\subsection{Null scrolls and their minimality conditions}
Since the timelike surfaces which induce $B$-scrolls are given in the form of \eqref{eq:B-scrolltypeminimal}, the surfaces defined hereafter, named null scrolls, are worth considering.
In this subsection, we will derive their minimality conditions through some basic calculations.
For an arbitrary $\mathfrak{nil}_3$-valued vector field $X(s)=\sum_{j=1}^3 X^j(s) e_j$ defined on an open interval,
we denote the derivative (not covariant derivative) of $X$ by $X'$,
that is,
$X'(s)= \sum_{j=1}^3 {X^j}'(s) e_j$.
Here the \mbox{notation $'$} denotes the derivation with respect to the parameter $s$.

\begin{Definition}\label{defnullscroll}
 A timelike surface $f$ into $\Nil$ is said to be a {\it null scroll}
 if
 there exist a null curve $\gamma=\gamma(s)$ in $\Nil$
 and a curve $\widetilde B=\widetilde B(s)$ which takes values in the light cone in $\mathfrak{nil}_3$
 such that $f$ can be represented as
 \begin{equation}\label{scroll}
  f(s, t)= \gamma(s) \cdot \exp( t\widetilde B(s) ).
 \end{equation}
 Here,
 $\exp: \mathfrak{nil}_3 \to \Nil$ is the exponential map of Lie group $\Nil$.
 Moreover,
we call the curve $\gamma$ a {\it base curve}
and $\widetilde B$ a {\it ruling} of a null scroll $f$.
\end{Definition}
\begin{Remark}\label{remnullscroll}
 The maps of the equation \eqref{scroll} can also be defined for arbitrary Lie groups.
 For example,
 in semi-Euclidean spaces,
 the exponential map is the identity map and the group structure is given by the usual sum.
 Then the maps defined as \eqref{scroll} are ruled surfaces.
 Moreover,
 in Lie groups endowed with bi-invariant metrics,
 the exponential maps define geodesics,
 and then the maps of the form \eqref{scroll} become ruled surfaces.
 Therefore maps of \eqref{scroll} can be considered as a natural generalization of ruled surfaces.
\end{Remark}
Let $f(s,t) = \gamma(s) \cdot \exp( t\widetilde B(s) )$ be a null scroll
and $A$ be the velocity of $\gamma$.
Moreover expand $\widetilde B= \sum_{j=1}^3 B^je_j$.
Since the derivations of $f$ with respect to $s$ and $t$ are computed as
\begin{align*}
 f_s
 =&
 (A^1+t{B^1}') E_1 + (A^2 + t{B^2}') E_2\\
  &+ \left( A^3 +t({B^3}' + A^1B^2-A^2B^1) + \frac{t^2}2 (B^2{B^1}'- B^1{B^2}') \right) E_3,\\
 f_t
 =&
 B^1E_1 + B^2E_2 + B^3E_3,
\end{align*}
the first fundamental form $I$ of $f$ is given by
\begin{equation*}
 I=
 g_{11}ds^2 +2 g_{12}dsdt
\end{equation*}
where $g_{ij}$ are computed as
\begin{align*}
 g_{11}=&g(f_s, f_s)\\
 =&
 t \left( 2g(A, {\widetilde B}') + 2(A^1B^2-A^2B^1)A^3 \right)\\
  &+
 t^2 \left(g({\widetilde B}', {\widetilde B}') + 2( A^1B^2-A^2B^1){B^3}' + (A^1B^2-A^2B^1)^2 + (B^2{B^1}' - B^1{B^2}')A^3 \right)\\
  &+
 t^3 (B^2{B^1}' - B^1{B^2}') ({B^3}' + A^1B^2 - A^2B^1)\\
  &+
 t^4 \frac14 (B^2{B^1}' - B^1{B^2}')^2,\\
 g_{12}=&g(f_s, f_t)\\
 =&
 g(A, \widetilde B)
  +
 t (A^1B^2 - A^2B^1)B^3 
  +
 t^2 \frac12 (B^2 {B^1}' - B^1 {B^2}') B^3,\\
 g_{22}=&g(f_t, f_t)=0.
\end{align*}
Obviously,
the degeneracy of the first fundamental form coincides with the vanishing $g_{12}$.
Denoting the second fundamental form $II$ as $II= h_{11}ds^2 + 2h_{12}dsdt + h_{22}dt^2$,
the mean curvature $H$ of $f$ can be given by
\begin{equation*}
 H= -\frac{g_{11}h_{22} -2g_{12}h_{12} } { 2{g_{12}}^2 }.
\end{equation*}
In particular,
we have the equivalence
\begin{equation}\label{minimality1}
 H=0 \Longleftrightarrow g_{11} g ( \nabla_{\partial_t} f_t, f_s \times f_t) -2 g_{12} g ( \nabla_{\partial_s} f_t, f_s \times f_t )=0.
\end{equation}
Straightforward computations show
\begin{equation*}
 \nabla_{\partial_t} f_t
 =
 -B^2B^3 E_1 - B^1 B^3 E_2,
\end{equation*}
\begin{align*}
 f_s \times f_t
 =&
 \left(
   \left(
     A^3 + t({B^3}' + A^1B^2 - A^2 B^1) + \frac{t^2}2 (B^2 {B^1}' - B^1 {B^2}')
   \right)
   B^2
   - (A^2 + t {B^2}') B^3
 \right)
 E_1\\
 &+
 \left(
   \left(
     A^3 + t({B^3}' + A^1B^2 - A^2 B^1) + \frac{t^2}2 (B^2 {B^1}' - B^1 {B^2}')
   \right)
   B^1
   - (A^1 + t {B^1}') B^3
 \right)
 E_2\\
 &+
 \left(
   (A^1 + t{B^1}') B^2 - (A^2 + t {B^2}') B^1
 \right)
 E_3,
\end{align*}
and then we obtain
\begin{equation*}
 g(\nabla_{\partial_t} f_t, f_s \times f_t)
 =
 -g_{12} (B^3)^2.
\end{equation*}
Thus,
since the non-degeneracy of $f$ means that $g_{12}$ vanishes nowhere,
the minimality condition \eqref{minimality1} can be represented as
\begin{equation}\label{minimality2}
 H=0 \Longleftrightarrow g_{11}(B^3)^2 + 2g(\nabla_{\partial_s} f_t, f_s \times f_t) = 0.
\end{equation}
The covariant derivative of $f_t$ with respect to $\partial_s$ is computed as
\begin{align*}
\nabla_{\partial_s} f_t
 =&
 \left(
   {B^1}' - \frac12
   \left(
     A^3 + t({B^3}' + A^1B^2 - A^2 B^1) + \frac{t^2}2 (B^2 {B^1}' - B^1 {B^2}')
   \right)
   B^2
   -\frac12 (A^2 + t {B^2}') B^3
 \right)
 E_1\\
 &+
 \left(
   {B^2}' - \frac12
   \left(
     A^3 + t({B^3}' + A^1B^2 - A^2 B^1) + \frac{t^2}2 (B^2 {B^1}' - B^1 {B^2}')
   \right)
   B^1
   -\frac12 (A^1 + t {B^1}') B^3
 \right)
 E_2\\
 &+
 \left(
   {B^3}' + \frac12 (A^1 + t{B^1}') B^2 - \frac12 (A^2 + t {B^2}') B^1
 \right)
 E_3.
 \end{align*}
Therefore we have
\begin{align}
 g ( \nabla_{\partial_s} f_t, f_s \times f_t )
 =&
 g(A, \widetilde B \times {\widetilde B}') + \frac12 g(A, \widetilde B) g(A, B) \notag\\
 &+
 t
 \begin{pmatrix}
    - (A^1B^2-A^2B^1)  \left( A^3(B^3)^2 + (B^2{B^1}' - B^1 {B^2}') \right) \vspace{5pt}\\
    +\frac12 g(A, \widetilde B) (-B^1 {B^1}' + B^2 {B^2}' -B^3 {B^3}') \vspace{5pt}
 \end{pmatrix} \notag\\
 &+
 t^2
 \begin{pmatrix}
   -\frac12 (B^2 {B^1}' - B^1 {B^2}') \left( A^3(B^3)^2 + (B^2{B^1}' - B^1 {B^2}') \right) \vspace{5pt}\\
   -(A^1B^2-A^2B^1){B^3}' (B^3)^2 \vspace{5pt}\\
    -\frac12 (A^1B^2-A^2B^1)^2 (B^3)^2 \vspace{5pt}
 \end{pmatrix} \label{h_12}\\
 &+
 t^3
 \begin{pmatrix}
   -\frac12 (A^1B^2-A^2B^1) (B^2 {B^1}' - B^1 {B^2}') (B^3)^2 \vspace{5pt}\\
   -\frac12 (B^2 {B^1}' - B^1 {B^2}'){B^3}' (B^3)^2 \vspace{5pt}
 \end{pmatrix} \notag\\
 &+
 t^4
 \left(-\frac18\right) (B^2 {B^1}' - B^1 {B^2}') (B^3)^2.\notag
\end{align}
Thus the minimality of null scrolls \eqref{minimality2} can be converted into the equations of coefficients for $t^k$, $k=0,1,2,3,4$
in $g_{11}(B^3)^2 + 2g(\nabla_{\partial_s} f_t, f_s \times f_t) = 0$.
\begin{Lemma}\label{lemmincondition}
 For any curve $\widetilde B= \sum_{j=1}^{3} B^je_j$ which takes values in the light cone in $\mathfrak{nil}_3$,
 there uniquely exists a function $\beta:I \to \R$ such that $\widetilde B \times {\widetilde B}' = -\beta \widetilde B$.
 Then it follows that
 \begin{equation*}
 g({\widetilde B}', {\widetilde B}')= \beta^2.
 \end{equation*}
\end{Lemma}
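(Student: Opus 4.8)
The plan is to work entirely at the level of the Lie algebra $\mathfrak{nil}_3 \cong (\R^3, g)$, where $g$ has the orthonormal basis $e_1$ (timelike), $e_2,e_3$ (spacelike) and the vector product $\times$ is the one induced by $g$ and the volume form $dx_1\wedge dx_2\wedge dx_3$. From $g(e_1\times e_2,e_3)=1$ one reads off $e_1\times e_2 = e_3$, $e_2\times e_3 = -e_1$, $e_3\times e_1 = e_2$, and hence the two Lorentzian cross-product identities that will do all the work: the Grassmann identity $X\times(Y\times Z) = -g(X,Z)Y + g(X,Y)Z$ and the orthogonality $g(X\times Y,X)=g(X\times Y,Y)=0$.

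First I would record the elementary consequences of $\widetilde B$ being light-like and nowhere zero. From $-(B^1)^2+(B^2)^2+(B^3)^2=0$ the component $B^1$ never vanishes, and differentiating $g(\widetilde B,\widetilde B)=0$ gives $g(\widetilde B,\widetilde B')=0$. Therefore $\widetilde B\times\widetilde B'$ is $g$-orthogonal to both $\widetilde B$ and $\widetilde B'$, and both $\widetilde B'$ and $\widetilde B\times\widetilde B'$ lie in the degenerate hyperplane $\widetilde B^\perp$, whose null line is exactly $\R\widetilde B$. If $\widetilde B'$ is a multiple of $\widetilde B$ at a point, then $\widetilde B\times\widetilde B'=0$ there; otherwise $\widetilde B'$ is spacelike (a vector of $\widetilde B^\perp$ off the null line), and the only vectors of $\widetilde B^\perp$ that are $g$-orthogonal to $\widetilde B'$ are the multiples of $\widetilde B$. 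In either case $\widetilde B\times\widetilde B'\in\R\widetilde B$, so there is a function $\beta$ with $\widetilde B\times\widetilde B'=-\beta\widetilde B$, unique because $\widetilde B$ never vanishes; concretely $\beta=(B^2{B^3}'-B^3{B^2}')/B^1$, which has the same regularity as $\widetilde B$. Alternatively this last step can be done componentwise, verifying the second and third components of $\widetilde B\times\widetilde B'=-\beta\widetilde B$ by substituting the derivative of the null relation $B^1{B^1}'=B^2{B^2}'+B^3{B^3}'$.

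For $g(\widetilde B',\widetilde B')=\beta^2$, the clean route is to feed $\widetilde B\times\widetilde B'$ back into a cross product with $\widetilde B'$ and apply the Grassmann identity: on one hand $\widetilde B'\times(\widetilde B\times\widetilde B') = -g(\widetilde B',\widetilde B')\widetilde B + g(\widetilde B',\widetilde B)\widetilde B' = -g(\widetilde B',\widetilde B')\widetilde B$, while on the other hand, using $\widetilde B\times\widetilde B'=-\beta\widetilde B$ twice, $\widetilde B'\times(\widetilde B\times\widetilde B') = \widetilde B'\times(-\beta\widetilde B) = \beta(\widetilde B\times\widetilde B') = -\beta^2\widetilde B$. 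Since $\widetilde B\neq 0$, comparison gives $g(\widetilde B',\widetilde B')=\beta^2$.

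The only point requiring care is the Lorentzian sign bookkeeping — fixing the structure constants of $\times$ on $\mathfrak{nil}_3$ from the stated metric and volume form, and the resulting signs in the Grassmann identity — together with the degenerate linear algebra of the null vector $\widetilde B$ (that $\widetilde B^\perp$ is a degenerate plane with null line $\R\widetilde B$). Neither is deep; it is worth noting that pairing $\widetilde B\times\widetilde B'$ with itself merely yields $0=0$ because $\widetilde B$ is null, which is exactly why one must cross it again with $\widetilde B'$ to extract the coefficient $\beta^2$.
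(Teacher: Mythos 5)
Your proof is correct, and the second half takes a genuinely different (and arguably cleaner) route than the paper. For the existence and uniqueness of $\beta$, both arguments rest on the same degenerate linear algebra: the paper observes that $\widetilde B \times \widetilde B'$ is null and $g$-orthogonal to the null vector $\widetilde B$, hence proportional to it, while you locate both $\widetilde B'$ and $\widetilde B \times \widetilde B'$ in the degenerate plane $\widetilde B^{\perp}$ whose radical is $\R\widetilde B$ — essentially the same observation, with your case split on whether $\widetilde B'$ is proportional to $\widetilde B$ making it slightly more explicit. The real divergence is in proving $g(\widetilde B',\widetilde B')=\beta^2$: the paper differentiates the relation $\widetilde B\times\widetilde B'=-\beta\widetilde B$ to get $\widetilde B\times\widetilde B''=-\beta'\widetilde B-\beta\widetilde B'$, then computes $\beta^2 g(\widetilde B',\widetilde B')=g(\widetilde B\times\widetilde B'',\widetilde B\times\widetilde B'')=g(\widetilde B,\widetilde B'')^2=\bigl(g(\widetilde B',\widetilde B')\bigr)^2$ via a Lagrange-type identity, which strictly speaking only yields the conclusion after separately handling the case $g(\widetilde B',\widetilde B')=0$ (where one must note $\beta=0$ as well). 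Your argument instead evaluates $\widetilde B'\times(\widetilde B\times\widetilde B')$ in two ways using the Lorentzian Grassmann identity, obtaining $-g(\widetilde B',\widetilde B')\widetilde B=-\beta^2\widetilde B$ directly; this is pointwise, requires no second derivative of $\widetilde B$, and sidesteps the cancellation issue entirely. Your sign conventions for $\times$ and the Grassmann identity check out against the paper's definition $g(X\times Y,Z)=dx_1\wedge dx_2\wedge dx_3(X,Y,Z)$. The one implicit hypothesis you use — that $\widetilde B$ is nowhere zero, needed both for $B^1\neq 0$ and for the uniqueness of $\beta$ — is also tacit in the paper's statement and proof, so this is not a defect of your argument.
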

\begin{proof}
 A direct computation shows
 $\widetilde B \times {\widetilde B}'$ takes values in the light cone in $\mathfrak{nil}_3$
 and has the product $0$ with $\widetilde B$ with respect to $g$ at each point.
 This implies the existence and uniqueness of a function $\beta: I \to \R$ such that $\widetilde B \times {\widetilde B}' = -\beta \widetilde B$.
 Thus differentiation derives
 \begin{equation*}
  \widetilde B \times {\widetilde B}'' = -\beta' \widetilde B - \beta {\widetilde B}'.
 \end{equation*}
 Therefore we have
 \begin{align*}
  \beta^2 g({\widetilde B}', {\widetilde B}')
  =&
  g(-\beta' \widetilde B - \widetilde B \times {\widetilde B}'', -\beta' \widetilde B - \widetilde B \times {\widetilde B}'')\\
  =&
  g(\widetilde B \times {\widetilde B}'', \widetilde B \times {\widetilde B}'')\\
  =&
  g(\widetilde B, {\widetilde B}'')^2\\
  =&
  \left( g({\widetilde B}', {\widetilde B}') \right)^2.
 \end{align*}
 This implies $g({\widetilde B}', {\widetilde B}') = \beta^2$.
\end{proof}
\begin{Theorem}\label{thm minimal condition}
 Let $\gamma$ be a null curve in $\Nil$,
 $\widetilde B= \sum_{i=1}^3 B^i e_i$ be a curve taking values in the light cone in $\mathfrak{nil}_3$
 and $\gamma (s) \cdot \exp( t \widetilde B(s) )$ be a null scroll.
 Then following statements are equivalent.
 \begin{enumerate}
  \item
  The null scroll $\gamma(s) \cdot \exp( t \widetilde B(s) )$ has the mean curvature $0$.
  \item
   $g (A, \widetilde B)=0$ or $g(A, B) =2\beta$ holds.
 \end{enumerate}
 Here,
 the vector field $B$ is given by
 $B=B^1e_1 + B^2e_2-B^3e_3$
 and
 the function $\beta$ is defined for $\widetilde B$ in Lemma $\ref{lemmincondition}$.
\end{Theorem}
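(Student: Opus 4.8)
The plan is to translate the minimality criterion \eqref{minimality2}, namely
$g_{11}(B^3)^2 + 2\,g(\nabla_{\partial_s} f_t, f_s \times f_t) = 0$, into a polynomial identity in $t$ and then show that the vanishing of all five coefficients ($t^0$ through $t^4$) is equivalent to statement (2). First I would substitute the explicit expression for $g_{11}$ (the quartic in $t$ displayed above) and the expression \eqref{h_12} for $g(\nabla_{\partial_s} f_t, f_s \times f_t)$, collect powers of $t$, and simplify using the fact that $\widetilde B$ is null, i.e. $g(\widetilde B,\widetilde B) = -(B^1)^2 + (B^2)^2 + (B^3)^2 = 0$, together with its derivative $-B^1{B^1}' + B^2{B^2}' + B^3{B^3}' = 0$. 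The key auxiliary input is Lemma~\ref{lemmincondition}: writing $\widetilde B \times {\widetilde B}' = -\beta \widetilde B$ gives $g({\widetilde B}',{\widetilde B}') = \beta^2$, and I would also record the scalar identity $g(A,\widetilde B\times{\widetilde B}') = -\beta\, g(A,\widetilde B)$, which lets me rewrite the $t^0$-coefficient of \eqref{h_12} as $-\beta\, g(A,\widetilde B) + \tfrac12 g(A,\widetilde B) g(A,B) = \tfrac12 g(A,\widetilde B)\bigl(g(A,B) - 2\beta\bigr)$.

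The central observation I expect to fall out of the computation is that \emph{every} coefficient of $t^k$ in $g_{11}(B^3)^2 + 2\,g(\nabla_{\partial_s} f_t, f_s \times f_t)$ factors through the product $g(A,\widetilde B)\bigl(g(A,B) - 2\beta\bigr)$, or more precisely is a combination of $g(A,\widetilde B)$-type and $\bigl(g(A,B)-2\beta\bigr)$-type factors in such a way that the whole polynomial vanishes identically in $t$ if and only if at each point $s$ one of the two scalars $g(A,\widetilde B)$ or $g(A,B)-2\beta$ is zero. Concretely: the $t^0$ term is $\tfrac12 g(A,\widetilde B)(g(A,B)-2\beta)$ up to a constant; for the higher terms I would use the null condition on $\widetilde B$ to rewrite the various quantities $A^1B^2 - A^2B^1$, $B^2{B^1}' - B^1{B^2}'$, $A^3(B^3)^2 + (B^2{B^1}'-B^1{B^2}')$ appearing in $g_{11}$ and in \eqref{h_12} in terms of $g(A,\widetilde B)$, $g(A,B)$, $\beta$ and $B^3$. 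A useful relation here is that, since $\widetilde B \times {\widetilde B}' = -\beta\widetilde B$, the third component gives $B^1{B^2}' - B^2{B^1}' = -\beta B^3$, i.e. $B^2{B^1}' - B^1{B^2}' = \beta B^3$; substituting this collapses much of the apparent quartic structure of $g_{11}$.

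The forward direction ($H=0 \Rightarrow$ (2)) is then immediate from the $t^0$ coefficient alone once one knows the polynomial vanishes identically — either $g(A,\widetilde B) = 0$ at a point or $g(A,B) = 2\beta$ there, and a continuity/connectedness argument on the (open, dense, or possibly all of the) domain upgrades this pointwise dichotomy to a global one on each connected component. For the converse I would simply check that assuming $g(A,\widetilde B) \equiv 0$ makes $g_{11} \equiv 0$ (inspect the quartic: each coefficient contains a factor $g(A,\widetilde B)$ after using $B^2{B^1}'-B^1{B^2}' = \beta B^3$ and the null relations) and simultaneously kills $g(\nabla_{\partial_s} f_t, f_s\times f_t)$ by the analogous factorization; and assuming $g(A,B) = 2\beta$ makes the same combination vanish term by term.

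The main obstacle I anticipate is purely the bookkeeping in the $t^2$ and $t^3$ coefficients: these mix $g_{11}$-contributions of the form $\bigl(g({\widetilde B}',{\widetilde B}') + 2(A^1B^2-A^2B^1){B^3}' + (A^1B^2-A^2B^1)^2 + (B^2{B^1}'-B^1{B^2}')A^3\bigr)(B^3)^2$ with the bracketed $t^2$-term of \eqref{h_12}, and it is not visually obvious that the sum factors cleanly. The resolution should be to systematically eliminate ${B^3}'$ using the differentiated null relation ${B^3}' = \bigl(B^1{B^1}' - B^2{B^2}'\bigr)/B^3$ (valid where $B^3 \ne 0$; the locus $B^3 = 0$ is handled separately or by continuity, noting $B^3 = 0$ together with $\widetilde B$ null forces $(B^1)^2 = (B^2)^2$), to use $g({\widetilde B}',{\widetilde B}') = \beta^2$, and to express everything in the three ``invariant'' scalars $g(A,\widetilde B)$, $g(A,B)$, $\beta$. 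Once in those variables the identity should reduce to the claim that each $t^k$-coefficient lies in the ideal generated by $g(A,\widetilde B)$ and $g(A,B) - 2\beta$, which is exactly the equivalence with (2).
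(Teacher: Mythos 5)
Your strategy is essentially the paper's: expand $g_{11}(B^3)^2 + 2\,g(\nabla_{\partial_s}f_t, f_s\times f_t)$ as a polynomial in $t$, rewrite the $t^0$-coefficient via Lemma \ref{lemmincondition} as $g(A,\widetilde B)\bigl(g(A,B)-2\beta\bigr)$, and use $B^2{B^1}'-B^1{B^2}'=\beta B^3$ together with $g({\widetilde B}',{\widetilde B}')=\beta^2$ to control the higher coefficients. The paper finds that the $t^2$, $t^3$, $t^4$ coefficients vanish identically and that only the $t^1$ coefficient actually uses the hypothesis; it is checked separately in the two cases $g(A,\widetilde B)=0$ and $g(A,B)=2\beta$, the latter split into $\beta=0$ and $\beta\neq0$ using $A\times\widetilde B=\tfrac{g(A,{\widetilde B}')}{\beta}\widetilde B-\tfrac{g(A,\widetilde B)}{\beta}{\widetilde B}'$. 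One point in your write-up needs correcting: your closing formalization, that each coefficient ``lies in the ideal generated by $g(A,\widetilde B)$ and $g(A,B)-2\beta$,'' is the wrong condition. An element $P\,g(A,\widetilde B)+Q\,\bigl(g(A,B)-2\beta\bigr)$ of that ideal is guaranteed to vanish only where \emph{both} generators vanish, whereas statement (2) is an ``or''; what you need (and what your earlier, informal description correctly asks for) is that each coefficient vanish on the \emph{union} of the two zero loci, i.e.\ that it be a multiple of the product $g(A,\widetilde B)\bigl(g(A,B)-2\beta\bigr)$ or vanish identically. With that fix, and with the $t^1$-coefficient verified case by case as in the paper, your argument goes through.
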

\begin{proof}
The coefficient of $t^0$ for $g_{11}(B^3)^2 + 2g(\nabla_{\partial_s} f_t, f_s \times f_t) $ is given by
\begin{equation}\label{eqt^0part}
 2g (A, \widetilde B \times {\widetilde B}') + g(A, \widetilde B) g(A, B).
\end{equation}
By Lemma \ref{lemmincondition},
\eqref{eqt^0part} is rewritten into
\begin{equation*}
 g(A, \widetilde B) (g(A,B)-2\beta),
\end{equation*}
and then the minimality condition \eqref{minimality2} derives
\begin{equation*}
  g(A, \widetilde B) = 0 \quad \text{or} \quad g(A,B)=2\beta.
 \end{equation*}
 It is easy by \eqref{h_12} to see that the coefficient functions of $t^3$ and $t^4$ satisfy the minimality condition \eqref{minimality2}.
 Let us prove that the coefficient function of $t^2$ always holds the minimality condition \eqref{minimality2}.
 The coefficient of $t^2$ in $g_{11} (B^3)^2 + 2 g (\nabla_{\partial_s} f_t, f_s \times f_t)$ is computed as
 \begin{equation}\label{coefficient of t^2}
  g({\widetilde B}', {\widetilde B}') (B^3)^2 - (B^2 {B^1}' - B^1 {B^2}')^2.
 \end{equation}
 Since $B^2 {B^1}' - B^1 {B^2}' = \beta B^3$ holds,
 by Lemma \ref{lemmincondition} we can see the coefficient \eqref{coefficient of t^2} of $t^2$ always vanishes.
 Next,
 we show that the equation of coefficient of $t$ vanishes when $g (A, \widetilde B)=0$ or $g(A, B) =2\beta$ holds.
 When $g(A, \widetilde B) =0$ holds,
 that is,
 the velocity and the ruling are linearly dependent at each point,
 we have
 \begin{equation*}
  A^1B^2-A^2B^1=0 \quad\text{and}\quad g(A, {\widetilde B}')=0.
 \end{equation*}
 Then it can be seen that the coefficient of $t$ in $g_{11}(B^3)^2 + 2g(\nabla_{\partial_s} f_t, f_s \times f_t)$,
 \begin{equation}\label{tpart}
  g(A, \widetilde B) (-B^1{B^1}' + B^2 {B^2}' - B^3 {B^3}')
   -2(A^1B^2-A^2B^1) (B^2 {B^1}'-B^1 {B^2}')
   +2 g (A, {\widetilde B}') (B^3)^2
 \end{equation}
 vanishes.
 Finally, assume that $g(A, B)=2\beta$ holds.
 If $\beta =0$,
 $B$ and $B'$ are linearly dependent at each point.
 Then it can be seen easily that the coefficient \eqref{tpart} of $t$ vanishes.
 Considering the case of $\beta \neq 0$,
 since $A \times \widetilde B$ can be represented as
 \begin{equation*}
  A \times \widetilde B = \frac{g(A, {\widetilde B}')} {\beta} \widetilde B - \frac{g( A, \widetilde B )} {\beta} {\widetilde B}',
 \end{equation*}
 we have
 \begin{equation*}
  A^1B^2-A^2B^1 = \frac{g(A, {\widetilde B}')} {\beta} B^3 - \frac{g( A, \widetilde B )} {\beta} {B^3}'.
 \end{equation*}
 Thus a simple computation shows \eqref{tpart} vanishes.
 \end{proof}


\subsection{Null frames for null curves in $\Nil$}\label{sec:nullframe}
We give a frame along a null curve in $\Nil$,
which we call a {\it null frame}.
By identifying the Lie algebra $\mathfrak{nil}_3$ with the Minkowski $3$-space $\Min_{(-,+,+)}$ as a vector space,
the curve theory of $\Min_{(-,+,+)}$ can be transported into $\Nil$.
\begin{Lemma}\label{Lorentzplane}
 Let $\langle\ , \ \rangle$ denote the Lorentzian metric in $\Min_{(-,+,+)}$.
 Moreover, fix a null vector $v \in \Min_{(-,+,+)}$.
 For each $2$-dimensional Lorentzian vector subspace $W \subset \Min_{(-,+,+)}$ which includes $v$,
 there exists a unique null vector $w \in W$ which satisfies $\langle v, w \rangle = 1$.
\end{Lemma}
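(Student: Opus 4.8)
The plan is to work concretely with coordinates adapted to the null vector $v$. First I would choose a (pseudo-)orthonormal basis of $\Min_{(-,+,+)}$ of the form $\{v, \hat v, e\}$, where $e$ is a spacelike unit vector orthogonal to $v$, and $\hat v$ is the unique null vector with $\langle v, \hat v\rangle = 1$ and $\langle \hat v, e\rangle = 0$ (this $\hat v$ exists by the usual light-cone structure: starting from $v$ and any transversal null direction, rescale). Since $W$ is a $2$-dimensional Lorentzian subspace containing the null vector $v$, its orthogonal complement $W^\perp$ is a $1$-dimensional spacelike line; pick a unit spacelike $e_0 \in W^\perp$, so that $W = \operatorname{span}\{v, e_0'\}$ for some $e_0'$, but more usefully $W = (W^\perp)^\perp$ consists of exactly the vectors orthogonal to $e_0$.

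Next, I would parametrize the candidate $w$. Any $w \in W$ can be written $w = a v + b u$ where $u \in W$ is some fixed vector with $\langle v, u\rangle \neq 0$ (such $u$ exists because $W$ is Lorentzian, hence not totally degenerate on $v$: if every vector of $W$ were orthogonal to $v$, then $v \in W^\perp \cap W$ would be a null direction in the radical, contradicting that $W$ is nondegenerate). Normalize $u$ so that $\langle v, u\rangle = 1$. Then $\langle v, w\rangle = a\langle v,v\rangle + b\langle v,u\rangle = b$, so the condition $\langle v, w\rangle = 1$ forces $b = 1$, i.e. $w = a v + u$. It remains to solve $\langle w, w\rangle = 0$: expanding, $\langle w,w\rangle = a^2\langle v,v\rangle + 2a\langle v,u\rangle + \langle u,u\rangle = 2a + \langle u,u\rangle$, using $\langle v,v\rangle = 0$ and $\langle v,u\rangle = 1$. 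This is linear in $a$, so there is exactly one solution $a = -\tfrac12\langle u,u\rangle$, giving existence and uniqueness of $w$ simultaneously.

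The only genuine point to check carefully — the place I expect the mild obstacle — is the claim that a Lorentzian $2$-plane $W$ containing a null vector $v$ does admit a vector $u \in W$ with $\langle v, u\rangle \neq 0$; equivalently, that $v$ is not in the radical of $W$. This is exactly where the Lorentzian (nondegenerate) hypothesis on $W$ is used: the restriction of $\langle\,,\,\rangle$ to $W$ is a nondegenerate form of signature $(1,1)$, so its radical is trivial, and since $v \neq 0$ there is some $u \in W$ pairing nontrivially with it. Once this is in hand, the computation above is a one-line linear solve, and I would present it essentially as written, perhaps phrasing the conclusion as: the affine line $\{v' + u : \langle v,v'\rangle = 0\} \cap W$ meets the null cone in precisely one point because restricting the quadratic form $\langle\cdot,\cdot\rangle$ to that line gives an affine-linear (degree $\le 1$) function of the parameter with nonzero linear term.
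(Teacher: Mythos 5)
Your proposal is correct and follows essentially the same route as the paper: write $w$ in a basis of $W$ containing $v$, observe that $\langle v,w\rangle=1$ fixes one coefficient and that $\langle v,v\rangle=0$ makes the null condition linear in the remaining coefficient, yielding a unique solution. Your explicit justification that $v$ is not in the radical of $W$ (so that some $u\in W$ pairs nontrivially with $v$) is a point the paper leaves implicit when it divides by $\langle v,x\rangle$, so that remark is a welcome addition rather than a deviation.
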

\begin{proof}
 Fix a Lorentzian subspace $W \subset \Min_{(-,+,+)}$ including $v$,
 and take a non-zero spacelike vector $x \in W$.
 Then $v$ and $x$ span $W$.
 Let $w= c_1 v + c_2 x \in W$ be a null vector field which satisfies $\langle v, w \rangle =1$,
 and then
 \begin{equation*}
  0= \langle w, w \rangle = 2c_1 c_2 \langle v, x \rangle + {c_2}^2 \langle x, x \rangle,
 \end{equation*}
 \begin{equation*}
  1= \langle v, w \rangle = c_2 \langle v, x \rangle
 \end{equation*}
 hold.
 Thus coefficients $c_1$ and $c_2$ must be
 \begin{equation*}
  c_1=-\frac{\langle x, x \rangle}{2 \langle v, x \rangle^2}
  \quad \text{and} \quad
  c_2=\frac{1}{\langle v, x \rangle}.
 \end{equation*}
 It can be checked easily that
 the null vector $w$ does not depend on the choice of a spacelike vector $x$ satisfying $W=span\{ v, x \}$.
\end{proof}
Let $\gamma$ be a null curve in $\Nil$ and
denote the bundle along $\gamma$ consisting of the vectors
which have product $0$ with $\frac{d \gamma}{ds}$ by $T^{\perp}\gamma$,
that is,
\begin{equation*}
 T^{\perp} _{\gamma(s)}\gamma = \left\{ v \in T_{\gamma(s)} \Nil \ \middle| \ g \left( v, \frac{d\gamma}{ds}(s) \right)=0 \right\}.
\end{equation*}
Since the subspace $T^{\perp} _{\gamma(s)}\gamma$
is a $2$-dimensional vector space including $T_{\gamma(s)}\gamma$,
the bundle $T^{\perp}\gamma$ can be decomposed into
\begin{equation*}
 T^{\perp}\gamma = T\gamma \oplus S(T^{\perp}\gamma)
\end{equation*}
for some spacelike line bundle $S(T^{\perp}\gamma)$,
called a {\it screen bundle} of $\gamma$ (see \cite{DJ2007}).
Thus each fixed screen bundle of $\gamma$ gives an orthogonal decomposition of $T\gamma$:
\begin{equation*}
 T_{\gamma} \Nil= \left( S(T^{\perp}\gamma) \right)^{\perp} \oplus_{\mbox{orthogonal}} S(T^{\perp}\gamma).
\end{equation*}
Here,
clearly,
the plane bundle $\left( S(T^{\perp}\gamma) \right)^{\perp}$ includes the tangent vector $\frac{d\gamma}{ds}$
and
gives Lorentzian plane at each point of $\gamma$.
Therefore Lemma \ref{Lorentzplane} shows the existence of the $\mathfrak{g}$-valued vector field $B$
satisfying the condition $g(A, B)=1$.
Setting the spacelike vector field $C$ as $C= A \times B$,
we obtain the following proposition.
\begin{Proposition}\label{propnullfrenetframe}
 Every null curve $\gamma(s)$ has a frame $(A, B, C)$ and real-valued functions $k_i \ (i= 0, 1, 2)$ satisfying the following conditions$:$
 \begin{align}
  A&= \gamma^{-1} \frac{d\gamma}{ds},\quad 
  g(A, B) = g(C, C) =1,\notag\\
  g(A, A) &= g(B, B) = g(A, C) = g(B, C) =0,\label{nullfrenetframe}\\
  (A', B'&, C')
  =
  (A, B, C)\begin{pmatrix} k_0&0&-k_2\\0&-k_0&-k_1\\k_1&k_2&0\end{pmatrix}\notag.
 \end{align}
\end{Proposition}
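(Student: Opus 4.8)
The plan is to construct the frame $(A,B,C)$ along $\gamma$ by transporting the null Frenet theory of $\Min_{(-,+,+)}$ and then to read off the structure equations from the Lie-algebra identification. First I would take $A=\gamma^{-1}d\gamma/ds$, which is null by hypothesis; this is the velocity in the sense of the running definition. Fixing a screen bundle $S(T^\perp\gamma)$ as in the paragraph preceding the statement, the orthogonal complement $(S(T^\perp\gamma))^\perp$ is a Lorentzian plane bundle containing $A$, so Lemma~\ref{Lorentzplane} (applied fibrewise with $v=A$, $W=(S(T^\perp\gamma))^\perp_{\gamma(s)}$) produces a unique null vector field $B$ along $\gamma$ with $g(A,B)=1$ and $B\in(S(T^\perp\gamma))^\perp$, hence $g(B,B)=0$. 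Setting $C=A\times B$, the defining property of $\times$ together with $g(A,A)=g(B,B)=0$, $g(A,B)=1$ gives $g(A,C)=g(B,C)=0$ and $g(C,C)=1$ (a short computation using that $\{A,B,C\}$ is then a basis adapted to the signature $(-,+,+)$). This establishes all the orthogonality relations in \eqref{nullfrenetframe}.

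Next I would derive the structure matrix. Since $(A,B,C)$ is a pointwise basis of $\mathfrak{nil}_3$ (identified with the fibre of $T\Nil$ via left translation), I can write $A'=a_1A+a_2B+a_3C$ and similarly for $B'$, $C'$, where $'$ denotes ordinary differentiation of the $\mathfrak{nil}_3$-valued maps as in the paragraph defining $X'$. Differentiating each of the six relations in \eqref{nullfrenetframe} and using them back gives linear constraints on the nine coefficients: differentiating $g(A,A)=0$ gives the $B$-coefficient of $A'$ is $0$; differentiating $g(B,B)=0$ gives the $A$-coefficient of $B'$ is $0$; differentiating $g(C,C)=1$ gives the $C$-coefficient of $C'$ is $0$; differentiating $g(A,B)=1$ forces the $A$-coefficient of $A'$ and the $B$-coefficient of $B'$ to be negatives of each other, call them $k_0$ and $-k_0$; differentiating $g(A,C)=0$ and $g(B,C)=0$ ties the remaining entries together in pairs, leaving exactly two free functions $k_1,k_2$. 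Collecting these yields precisely
\begin{equation*}
(A',B',C')=(A,B,C)\begin{pmatrix} k_0&0&-k_2\\0&-k_0&-k_1\\k_1&k_2&0\end{pmatrix},
\end{equation*}
with $k_0,k_1,k_2$ real-valued functions of $s$. Finally I would note that the whole construction is the one used for the $B$-scroll frame in $\Min_{(-,+,+)}$, transported verbatim through the vector-space identification $\mathfrak{nil}_3\cong\Min_{(-,+,+)}$, so no new analytic input is needed beyond Lemma~\ref{Lorentzplane}.

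The one genuinely careful point — and the step I expect to be the main obstacle in the write-up — is the appearance of the extra diagonal function $k_0$, which is absent from the $\Min_{(-,+,+)}$ null Frenet frame in the $B$-scroll definition. In the Minkowski case one additionally normalises the parameter $s$ (the ``appropriate parameter'') so that $A=\gamma^{-1}d\gamma/ds$ has no $A$-component in $A'$; here the statement only asserts existence of \emph{some} frame, so I would simply allow $k_0\neq 0$ rather than reparametrise, and the verification above shows the skew-type structure (with the $(1,1)$ and $(2,2)$ entries $\pm k_0$) is exactly what the six differentiated relations permit. I would remark that choosing $s$ suitably, or equivalently rescaling $B$, can kill $k_0$, but that this is not needed for the proposition as stated; the clean way to present it is to carry $k_0$ throughout and observe it is forced to sit only on the diagonal in the indicated antisymmetric-like pattern.
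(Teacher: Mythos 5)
Your proposal is correct and follows essentially the same route as the paper: construct $B$ via the screen-bundle decomposition and Lemma \ref{Lorentzplane}, set $C=A\times B$, and then obtain the structure matrix by differentiating the six constancy relations in \eqref{nullfrenetframe}, with $k_0=a_A=-b_B$, $k_1=c_A=-b_C$, $k_2=c_B=-a_C$. Your closing remark about retaining $k_0$ here and only normalising it away later (by reparametrisation) matches the paper's treatment, which defers that step to the subsequent Remark.
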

\begin{proof}
 For a null curve $\gamma$,
 take a frame $(A, B, C)$ as discussed above.
 Then it is sufficient to show the frame $(A, B, C)$ satisfies the last condition.
 Let us denote $A'$, $B'$, and $C'$ as
 \begin{equation*}
  A'= a_A A + b_A B + c_A C,
 \quad
  B'= a_B A + b_B B + c_B C,
 \end{equation*}
 and
 \begin{equation*}
  C'= a_C A + b_C B + c_C C.
 \end{equation*}
 Clearly,
 we have $b_A=0$, $a_B=0$, and $c_C=0$ since $g(A, A)$, $g(B, B)$, and $g(C, C)$ are constant.
 Moreover, since $g(A, B)$, $g(A, C)$, and $g(B, C)$ are constant,
 we have
 \begin{equation*}
  a_A = g(A', B) = \frac{d}{ds} g(A, B) -g(A, B') = -g(A, B') = -b_B,
 \end{equation*}
 \begin{equation*}
  c_A = g(A', C) = \frac{d}{ds} g(A, C) -g(A, C') = -g(A, C') = -b_C,
 \end{equation*}
 \begin{equation*}
  c_B = g(B', C) = \frac{d}{ds} g(B, C) -g(B, C') = -g(B, C') = -a_C.
 \end{equation*}
 Therefore we obtain the last condition of the proposition by putting $k_i$ as
 $k_0=a_A=-b_B$, $k_1=c_A=-b_C$, and $k_2=c_B=-a_C$.
\end{proof}
\begin{Remark}
 If necessary,
 by reparametrizing the curve,
 we can take $k_0=0$ from the beginning \cite[Lemma $2.1$ and Remark $2.3$]{BD1996}.
 In the case of $\Min$,
 such a parameter is called the {\it distinguished parameter}.
 We would like to note that the frame $(A, B, C)$ depends on the choice of a screen bundle.
 Thus another choice of a screen bundle gives a change in $k_i$. 
\end{Remark}
We call a frame $(A, B, C)$ given by Proposition \ref{propnullfrenetframe} with $k_0=0$
a {\it null frame} for a null curve $\gamma$
and
the functions $k_1$ and $k_2$ the {\it first curvature} and {\it second curvature} of $\gamma$ with respect to $(A, B, C)$.
\begin{Theorem}\label{curvefromcurvatures}
 For any real-valued functions $k_1$ and $k_2$,
 there exists a null curve $\gamma$ in $\Nil$ which has $k_i \ (i=1, 2)$ as the first and second curvature
 with respect to some null frame. 
\end{Theorem}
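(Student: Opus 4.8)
The plan is to run a ``fundamental theorem of curves'' argument, transporting the corresponding statement for null curves in $\Min_{(-,+,+)}$ to $\Nil$ through the identification $\mathfrak{nil}_3\cong\Min_{(-,+,+)}$ fixed in subsection~\ref{sec:nullframe}. Recall that $\{E_1,E_2,E_3\}$ is $g$-orthonormal, so $g$ evaluated on left-invariant vector fields is exactly the metric of $\Min_{(-,+,+)}$; hence every inner-product computation carried out in $\mathfrak{nil}_3$ transfers verbatim to the corresponding left-translated vector fields along any curve.

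\emph{Step 1: produce the frame data.} Given (smooth) functions $k_1,k_2$ on an interval $I$, I would first solve the linear system
\begin{equation*}
 (A',B',C')=(A,B,C)\begin{pmatrix}0&0&-k_2\\0&0&-k_1\\k_1&k_2&0\end{pmatrix}
\end{equation*}
for an $\mathfrak{nil}_3$-valued frame $(A,B,C)$ on $I$, with initial value $(A(0),B(0),C(0))=\bigl(\tfrac1{\sqrt2}(1,1,0),\,\tfrac1{\sqrt2}(-1,1,0),\,(0,0,1)\bigr)$, which satisfies the relations \eqref{nullfrenetframe} at $s=0$. Linearity yields a solution on all of $I$. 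The coefficient matrix is skew with respect to the Gram matrix of \eqref{nullfrenetframe} (a direct check), so the matrix of mutual $g$-products of $(A,B,C)$ solves a homogeneous linear ODE and therefore stays equal to its initial value; hence \eqref{nullfrenetframe} holds, with $k_0=0$, for all $s$. Equivalently, this step is precisely the classical fundamental theorem of null curves in $\Min_{(-,+,+)}$ and could simply be quoted.

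\emph{Step 2: integrate $A$ inside $\Nil$.} Next I would seek $\gamma=(\gamma_1,\gamma_2,\gamma_3):I\to\Nil$ with $\gamma^{-1}\frac{d\gamma}{ds}=A=\sum_jA^je_j$, that is, $\frac{d\gamma}{ds}=\sum_jA^jE_j(\gamma)$. Using $E_1=\partial_{x_1}-\tfrac{x_2}2\partial_{x_3}$, $E_2=\partial_{x_2}+\tfrac{x_1}2\partial_{x_3}$, $E_3=\partial_{x_3}$, this unravels into
\begin{equation*}
 \gamma_1'=A^1,\qquad \gamma_2'=A^2,\qquad \gamma_3'=A^3-\tfrac12\gamma_2A^1+\tfrac12\gamma_1A^2,
\end{equation*}
which is solved by successive quadratures ($\gamma_1=\int_0^sA^1$, then $\gamma_2=\int_0^sA^2$, then $\gamma_3$); in particular $\gamma$ is defined on all of $I$.

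\emph{Step 3: conclude.} By left-invariance of $g$, the left translations of $A,B,C$ along $\gamma$ have the same mutual $g$-products and the same structure equations as the $\mathfrak{nil}_3$-valued functions above, so $(A,B,C)$ meets all requirements of Proposition~\ref{propnullfrenetframe} with $k_0=0$ relative to $\gamma$ (the Gram matrix being invertible, $A,B,C$ are everywhere independent, hence a frame). In particular $g\bigl(\tfrac{d\gamma}{ds},\tfrac{d\gamma}{ds}\bigr)=g(A,A)=0$, so $\gamma$ is a null curve, and $(A,B,C)$ is a null frame whose first and second curvatures are the prescribed $k_1,k_2$. The only point that is not pure formalism is the global solvability in Step~2, which the explicit quadratures settle thanks to the solvable structure of $\Nil$; everything else is routine transport of structure along $\mathfrak{nil}_3\cong\Min_{(-,+,+)}$.
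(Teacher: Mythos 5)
Your proposal is correct and follows essentially the same route as the paper: solve the linear frame ODE with an admissible null initial frame, show the relations \eqref{nullfrenetframe} propagate because the Gram data satisfies a linear ODE with the desired value as a constant solution (the paper does this by checking $\frac{d}{ds}(A^iB^j+A^jB^i+C^iC^j)=0$ and passing to the orthonormal triple $V_1,V_2,V_3$), and then recover $\gamma$ by the same three successive quadratures. The only cosmetic differences are your choice of initial frame and your packaging of the conservation argument via skewness of the coefficient matrix with respect to the Gram form.
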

\begin{proof}
 For a null frame $(\sum_{i=1}^3 A^ie_i, \sum_{i=1}^3 B^ie_i, \sum_{i=1}^3 C^ie_i)$,
 let us contemplate the matrix
 \begin{equation*}
  \begin{pmatrix}
   A^1&B^1&C^1\\A^2&B^2&C^2\\A^3&B^3&C^3
  \end{pmatrix}
 \end{equation*}
 and designate it as $(A, B, C)$ herein.
 Moreover, define a matrix $F_0$ by
 \begin{equation*}
  F_0=
  \begin{pmatrix}
   -1/{\sqrt 2} & 1/{\sqrt 2} & 0\\ 1/{\sqrt 2} & 1/{\sqrt 2} & 0\\ 0&0&1
  \end{pmatrix}.
 \end{equation*}
 For any null frame $(A, B, C)$,
 there exists a unique $\text{O}(2,1)$-valued smooth map $X$ such that $(A, B, C)= XF_0$.
 Conversely,
 for any $\text{O}(2,1)$-valued smooth map $X$,
 the map $XF_0$ satisfies the conditions \eqref{nullfrenetframe}.
 Thus we should show the proposition with the initial condition
 \begin{equation}\label{initial}
  \gamma(0)=(0, 0, 0)
  \quad \text{and} \quad
  \left( A(0), B(0), C(0) \right) = F_0.
 \end{equation}
 First,
 take the solution of the following differential equations with \eqref{initial}
 \begin{equation}\label{system}
  \frac{d}{ds}\begin{pmatrix}A^1\\A^2\\A^3\end{pmatrix}
  =
  k_1\begin{pmatrix}C^1\\C^2\\C^3\end{pmatrix},
  \quad
  \frac{d}{ds}\begin{pmatrix}B^1\\B^2\\B^3\end{pmatrix}
  =
  k_2\begin{pmatrix}C^1\\C^2\\C^3\end{pmatrix},
  \quad
  \frac{d}{ds}\begin{pmatrix}C^1\\C^2\\C^3\end{pmatrix}
  =
  -k_2\begin{pmatrix}A^1\\A^2\\A^3\end{pmatrix}
  -k_1\begin{pmatrix}B^1\\B^2\\B^3\end{pmatrix}.
 \end{equation}
 It is only necessary to prove that the solution $(A, B, C)$ satisfies the conditions \eqref{nullfrenetframe}.
 By direct calculations using \eqref{system},
 we obtain
 \begin{equation*}
  \frac{d}{ds}
   \left(
    A^i B^j +A^j B^i + C^i C^j
   \right)
   =0.
 \end{equation*}
 Then,
 substituting $s=0$,
 we get
 \begin{equation*}
  A^iB^j + A^j B^i + C^i C^j
  =
  \left\{
   \begin{matrix}
    -1 &(i=j=1) \\ 1 &(i=j \neq 1) \\ 0 &(i \neq j)
   \end{matrix}.
  \right.
 \end{equation*}
 Define vector fields $V_1, V_2, V_3$ by
 \begin{equation*}
  V_1= \frac{1}{\sqrt 2} (B-A), \quad V_2=\frac{1}{\sqrt 2} (B+A), \quad V_3= C.
 \end{equation*}
 Consequently, $(V_1, V_2, V_3)$ constitutes an orthonormal basis of $\mathfrak{nil}_3$ at each point,
 signifying the fulfillment of conditions \eqref{nullfrenetframe} by $(A, B, C)$.
 Furthermore, the null curve $\gamma=(\gamma^1, \gamma^2, \gamma^3)$
 which has the null frame $(A, B, C)$
 is given by
 \begin{equation*}
  \gamma^1=\int_0^s A^1 ds,
  \quad
  \gamma^2=\int_0^s A^2 ds,
  \quad
  \gamma^3=\int_0^s (A^3-\frac12 \gamma^2 A^1 + \frac12 A^2 \gamma^1)ds.
 \end{equation*}
\end{proof}
\begin{Theorem}\label{curvaturetoscroll}
 For a null curve $\gamma$ in $\Nil$ with a null frame $(A, B, C)$,
 let $f: \D \to \Nil$ be a null scroll defined on an open subset $\D \subset \C$,
 over $\gamma$ with the ruling $\widetilde B= \sum_{i=1}^3 B^ie_i$ where $B=B^1e_1+B^2e_2-B^3e_3$,
 that is,
 \begin{equation*}
  f(s,t)=\gamma(s) \cdot \exp (t \widetilde B(s)).
 \end{equation*}
 If $k_2=1/2$, then $f$ is minimal.
\end{Theorem}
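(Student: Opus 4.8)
The plan is to reduce everything to Theorem~\ref{thm minimal condition} and verify its second alternative. Write $\iota$ for the linear involution $x^1e_1+x^2e_2+x^3e_3\mapsto x^1e_1+x^2e_2-x^3e_3$ of $\mathfrak{nil}_3$, so that $\widetilde B=\iota(B)$. First I would check that $f$ really is a null scroll in the sense of Definition~\ref{defnullscroll}: since $\iota$ is an isometry of $(\mathfrak{nil}_3,g)$ one has $g(\widetilde B,\widetilde B)=g(B,B)=0$ and $\widetilde B\neq 0$, so $\widetilde B$ is light-cone valued and Lemma~\ref{lemmincondition} furnishes the function $\beta$ with $\widetilde B\times\widetilde B'=-\beta\widetilde B$. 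Because $(A,B,C)$ is a null frame we have $g(A,B)=1$, so by Theorem~\ref{thm minimal condition} it suffices to prove that $\beta=\tfrac12$.

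To compute $\beta$ I would use the structure equations of Proposition~\ref{propnullfrenetframe} with $k_0=0$, which give $B'=k_2C$ and hence $\widetilde B'=\iota(B')=k_2\,\iota(C)$. The reflection $\iota$ has determinant $-1$, so $\iota(u)\times\iota(v)=-\iota(u\times v)$, whence
\[
 \widetilde B\times\widetilde B'=k_2\,\bigl(\iota(B)\times\iota(C)\bigr)=-k_2\,\iota(B\times C).
\]
Now $C=A\times B$ for a null frame (Subsection~\ref{sec:nullframe}), so $B\times C$ is $g$-orthogonal to $B$ and to $C$, while $g(B\times C,A)=dx_1\wedge dx_2\wedge dx_3(B,C,A)=dx_1\wedge dx_2\wedge dx_3(A,B,C)=g(A\times B,C)=g(C,C)=1$; expanding $B\times C$ in the basis $(A,B,C)$ and using $g(A,B)=1$ and $g(B,B)=g(B,C)=g(A,C)=0$ then gives $B\times C=B$. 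Consequently $\widetilde B\times\widetilde B'=-k_2\,\iota(B)=-k_2\widetilde B$, i.e.\ $\beta=k_2$.

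Plugging in $k_2=\tfrac12$ gives $\beta=\tfrac12$, so $g(A,B)=1=2\beta$, and Theorem~\ref{thm minimal condition} shows that $f$ is minimal. The step I expect to demand the most care is the sign bookkeeping in the second paragraph: one must pin down both that the vector product changes sign under the reflection $\iota$ and that $B\times C$ equals $+B$ rather than $-B$, since the whole argument hinges on the final equality coming out as $g(A,B)=2\beta$ and not $g(A,B)=-2\beta$. Computing $B\times C$ by pairing against the basis $(A,B,C)$, as above, is the safe way to settle this and avoids any appeal to a sign-sensitive triple-product identity in Lorentzian signature.
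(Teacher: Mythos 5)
Your proof is correct and takes essentially the same route as the paper: reduce to Theorem~\ref{thm minimal condition} by showing $\beta=k_2$, then conclude from $g(A,B)=1=2\beta$ when $k_2=1/2$. The only difference is that you carefully justify the identity $\widetilde B\times\widetilde B'=-k_2\widetilde B$ (via $B'=k_2C$, the sign rule for the reflection $\iota$, and $B\times C=B$), a step the paper asserts without detail; your sign bookkeeping checks out.
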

\begin{proof}
 Since the vector field $B$ satisfies the null frame condition \eqref{nullfrenetframe},
 we can get $\beta = k_2$ where $\beta$ is defined in Lemma \ref{lemmincondition}.
 Then assuming $k_2= 1/2$ derives $2\beta = 1= g(A, B)$.
 Thus Theorem \ref{thm minimal condition} implies $f$ is minimal.
\end{proof}
\noindent
By Theorem \ref{curvefromcurvatures} and Theorem \ref{curvaturetoscroll}
we obtain the following corollary immediately.
\begin{Corollary}\label{Corcurvature}
 For an arbitrary real valued function $k_1$,
 there exists a minimal null scroll such that
 the base curve has the first curvature $k_1$ with respect to some null frame.
\end{Corollary}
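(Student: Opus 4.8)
The plan is to obtain the corollary directly by combining Theorem \ref{curvefromcurvatures} with Theorem \ref{curvaturetoscroll}, the only choice being to take the second curvature to be the constant $1/2$.

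First, given the prescribed real valued function $k_1$, I would apply Theorem \ref{curvefromcurvatures} to the pair $(k_1, k_2)$ where $k_2 \equiv 1/2$. This returns a null curve $\gamma$ in $\Nil$ together with a null frame $(A, B, C)$ with respect to which $\gamma$ has first curvature $k_1$ and second curvature $1/2$. In particular, whatever surface we build on $\gamma$, its base curve will have first curvature exactly the prescribed $k_1$ relative to this frame, which is the statement we want about the base curve.

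Next, I would form the null scroll $f(s,t) = \gamma(s) \cdot \exp(t\widetilde B(s))$ whose ruling $\widetilde B = \sum_i B^i e_i$ is the one attached to the frame vector $B = B^1 e_1 + B^2 e_2 - B^3 e_3$, exactly as in the hypotheses of Theorem \ref{curvaturetoscroll}. Here one checks that $\widetilde B$ is again light-cone valued: $B$ is a nonzero null vector of the frame, and flipping the sign of the third component is an isometry of the Lorentzian inner product on $\mathfrak{nil}_3$, so $g(\widetilde B,\widetilde B) = g(B,B) = 0$ and $\widetilde B \neq 0$. After restricting to a subdomain $\D \subset \C$ on which $f$ is an immersed timelike surface, $f$ is then a null scroll in the sense of Definition \ref{defnullscroll}. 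Since $k_2 = 1/2$, Theorem \ref{curvaturetoscroll} applies verbatim and shows that $f$ is minimal. Hence $f$ is a minimal null scroll whose base curve $\gamma$ has first curvature $k_1$ with respect to the null frame $(A, B, C)$, which is the assertion of the corollary.

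I do not expect any genuine obstacle here, since all the substance is already carried by Theorems \ref{curvefromcurvatures} and \ref{curvaturetoscroll}; the only bookkeeping is to confirm that the null curve produced by Theorem \ref{curvefromcurvatures} is a legitimate input to Theorem \ref{curvaturetoscroll} — that its frame yields a light-cone-valued ruling and that $g(A,B) = 1 = 2k_2$ — both of which are already contained in the definitions and in the proof of Theorem \ref{curvaturetoscroll}. The mildest point worth stating explicitly is merely that the constant $1/2$ is an admissible choice of $k_2$ in Theorem \ref{curvefromcurvatures}, which it plainly is, as that theorem allows arbitrary real valued functions.
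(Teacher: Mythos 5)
Your argument is correct and is exactly the paper's own proof: the corollary is obtained immediately by feeding the pair $(k_1, 1/2)$ into Theorem \ref{curvefromcurvatures} and then applying Theorem \ref{curvaturetoscroll} to the resulting null curve and frame. The extra bookkeeping you supply (that $\widetilde B$ is light-cone valued and that $g(A,B)=1=2k_2$) is a sound and harmless elaboration of what the paper leaves implicit.
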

In general,
an another coordinate $(x, y)$ for a null scroll $\gamma(s) \cdot \exp(t \widetilde B(s))$ is null coordinate
if and only if
the condition
\begin{equation*}
  s_x =0 \quad \text{and} \quad g_{11} s_y + 2g_{12}t_y =0,
\end{equation*}
or
\begin{equation}\label{conformalcondition}
  s_y =0 \quad \text{and} \quad g_{11} s_x + 2g_{12}t_x =0
\end{equation}
is satisfied.
Let us compute the support function $h$ and the Abresch-Rosenberg differential $Qdz^2$
for the minimal null scroll defined in Theorem \ref{curvaturetoscroll}.
For simplicity,
we assume that $(x, y)$ is a null coordinate system which satisfies the condition \eqref{conformalcondition}.

Since the para-complex number $l=(1+\ip)/2$ has the properties $l^2=l$ and $l \bar l=0$,
a coordinate $z= lx + \bar l y$ for a null coordinate $(x, y)$ gives a conformal coordinate,
and then
the first fundamental form $I$
and the unit normal field $N$
of the null scrolls are represented as
\[I= e^udzd\bar z= e^u dxdy= 2g_{12}s_x t_y dxdy,\]
\[N=-\ip \frac{f_z \times f_{\bar z}} {|g(f_z \times f_{\bar z}, f_z \times f_{\bar z})|^{1/2}}
    =-(g_{12})^{-1} f_s \times f_t.\]
Thus we have the support function
\begin{align*}
 h=&g(-e^{u/2} N, E_3) \\
  =&g( \sqrt{2} |s_x t_y|^{1/2} |g_{12}|^{-1/2} f_s \times f_t, E_3 ).
\end{align*}
Since
$B\times B'=\frac12 B$ and $g(A, B)=1$ hold
for the minimal null scrolls in Theorem \ref{curvaturetoscroll},
$g(f_s \times f_t, E_3)$ and $g_{12}$ are computed as
\begin{align*}
 g(f_s \times f_t, E_3)
 =&
 (A^1+t{B^1}')B^2- (A^2+t{B^2}')B^1\\
 =&
 A^1B^2-A^2B^1+t \frac12 B^3,
\end{align*}
\begin{align*}
 g_{12}
 =&
 g(A, \widetilde B)+ t(A^1B^2-A^2B^1)B^3 + t^2 \frac12 (B^2 {B^1}' - B^1 {B^2}')B^3\\
 =&
 g(A, \widetilde B)- (A^1B^2-A^2B^1)^2 + (A^1B^2-A^2B^1+t \frac12 B^3)^2\\
 =&
 g(A, \widetilde B)-g(A, B) g(A, \widetilde B)+ (A^1B^2-A^2B^1+t \frac12 B^3)^2\\
 =&
 (A^1B^2-A^2B^1+t \frac12 B^3)^2.
\end{align*}
Hence we obtain the support function
\begin{align}
h
=&
\sqrt{2}|s_x t_y|^{1/2} |g_{12}|^{-1/2} g(f_s \times f_t, E_3)\notag\\
=&
\sqrt{2} \epsilon |s_x t_y|^{1/2}\label{supportfctofframednullscroll}
\end{align}
where $\epsilon = {\rm sgn} (g(f_s \times f_t, E_3)) \in \{ \pm1 \}$.

To obtain the Abresch-Rosenberg differential,
we compute $\phi^3=g(f_z, E_3)$ and $g(\nabla_{\partial} f_z, N)$.
Since the derivative $f_z$ of $f$ with respect to $z$ is
\begin{align*}
 f_z
 =&
 l(s_x f_s + t_x f_t) + \bar l (t_y f_t)\\
 =&
 \left( l(s_x (A^1+ t{B^1}') + t_x B^1 ) + \bar l (t_y B^1) \right) E_1
 +
 \left( l(s_x (A^2+ t{B^2}') + t_x B^2 ) + \bar l (t_y B^2) \right) E_2\\
 &+
 \left( l(s_x D^3 + t_x B^3 ) + \bar l (t_y B^3) \right) E_3,
\end{align*}
we have
\begin{equation*}
 \phi^3= l(s_x D^3 + t_x B^3 ) + \bar l (t_y B^3)
\end{equation*}
where $D^3= g(f_s, E_3) = A^3 + t({B^3}' + A^1B^2-A^2B^1) + t^2 \frac12 (B^2 {B^1}'- B^1 {B^2}')$.
Besides,
a straightforward computation shows
\[
 \nabla_{\partial} f_z
 =
 l \left(
  s_{xx} f_s + t_{xx} f_t
  + (s_x)^2 \nabla_{\partial_s} f_s + 2s_xt_x \nabla_{\partial_t} f_s + (t_x)^2 \nabla_{\partial_t} f_t
 \right)
 +
 \bar l \left(
  t_{yy} f_t + (t_y)^2 \nabla_{\partial_t} f_t
 \right),
\]
and then,
by using the conditions \eqref{minimality1}, \eqref{minimality2}, and \eqref{conformalcondition},
the Hopf differential
can be computed as
\[
g(\nabla_{\partial} f_z, N)
=
l \left(
 (s_x)^2 h_{11} - (t_x)^2 (B^3)^2
\right)
+
\bar l \left(
 (t_y)^2 (B^3)^2
\right).
\]
Hence the Abresch-Rosenberg differential of null scrolls defined in Theorem \ref{curvaturetoscroll}
can be written as
\begin{equation}\label{ARdiffofframedscroll}
 Q
 =
 \frac{l}4 \left(
  -(s_x)^2 h_{11} - (s_x)^2 (D^3)^2 - 2s_xt_x D^3 B^3
 \right).
\end{equation}
Using the null frame conditions \eqref{nullfrenetframe},
the Abresch-Rosenberg differential \eqref{ARdiffofframedscroll} is computed as
\begin{align}
 Q
 =&
 \frac{l}{4} (s_x)^2 (g_{12})^{-1} \left(k_1 g_{12} + \frac{t^2}{8} (1+ 2A^3B^3 - (A^1B^2-A^2B^1)^2)\right)\notag\\
 =&
 \frac{l}{4} (s_x)^2 k_1.
\end{align}
We would like to note that
the calculations of the Abresch-Rosenberg differential until \eqref{ARdiffofframedscroll}
do not use the null frame conditions \eqref {nullfrenetframe}.
This means that the Abresch-Rosenberg differential $Qdz^2$ of minimal null scrolls
(which are not necessarily defined in Theorem \ref{curvaturetoscroll})
satisfies $Q \overline Q=0$.
Moreover,
the solution $w$ of the integrability conditions \eqref{integrablecondition} satisfies 
\[e^w = \frac1{16} h^2 = \frac18 s_xt_y.\]
On the other hand,
the exact solution $w$ of the Liouville equation
\[\frac12 w_{z \bar z} + e^w =0\]
is given by
\[w= \log \left( - \frac{p_xq_y}{(p(x)+q(y))^2} \right), \quad p(x)+q(y) \neq 0, \ p_x q_y<0\]
where $p$ and $q$ are functions of $x$ and $y$, respectively.
Therefore the coordinate systems $(s, t)$ and $(x, y)$ satisfy the equation
\[\frac{s_x t_y}{8} = -\frac{p_x q_y}{(p+q)^2}.\]
Hence we obtain the explicit representation of null scroll's coordinate system $(s, t)$
in terms of null coordinate system $(x, y)$ by separating variables
\[s(x)=8p(x), \quad t= \frac1{p(x)+q(y)}.\]

Furthermore,
for minimal null scrolls defined in Theorem \ref{curvaturetoscroll},
another minimal null scroll with the same or multiplied by $-1$ support function and the same Abresch-Rosenberg differential
can be obtained by the following theorem.
\begin{Theorem}\label{uniquenullscroll}
Let $f_1$ be a minimal null scroll defined in Theorem \ref{curvaturetoscroll}
and
$F_0$ be an element of $SO(2,1)$.
Denote the support function and the Abresch-Rosenberg differential of $f_1$ by $h$ and $Qdz^2$
and define a timelike surface $f_2$ so that
\begin{equation}\label{eqnonuniqueness1} 
 (\phi_2^1, \phi_2^2, \ip \phi_2^3)
 =
 (\phi_1^1, \phi_1^2, \ip \phi_1^3) F_0
\end{equation}
where $\phi_k^j$ for $j=1,2,3$ and $k=1,2$ are given by ${f_k}_z= \sum_{j=1}^{3} \phi_k^jE_j$ for $k=1,2$.
Then $f_2$ has the support function $\pm h$ and the Abresch-Rosenberg differential $Qdz^2$,
and it is also a minimal null scroll.
\end{Theorem}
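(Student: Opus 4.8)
The plan is to transport the statement to the Minkowski $3$-space via the correspondence between timelike minimal surfaces in $\Nil$ and timelike constant mean curvature $1/2$ surfaces in $\Min_{(-,+,+)}$ recalled in Section \ref{sec:pre}. Under this correspondence the relation \eqref{eqnonuniqueness1} becomes composition of the induced surface with the Lorentz transformation $F_0$, and, by Subsection \ref{subsec:Bscrollnullscroll}, the minimal null scrolls of Theorem \ref{curvaturetoscroll} are exactly the timelike minimal surfaces in $\Nil$ induced by $B$-scrolls of constant mean curvature $1/2$ (up to a left translation of $\Nil$). Since Lorentz transformations preserve $B$-scrolls (Remark \ref{rem:B-scrolls}(1)), this will close the argument.

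First I would dispose of the statements about the support function and the Abresch-Rosenberg differential. Let $\Phi_1$ be the timelike constant mean curvature $1/2$ surface in $\Min_{(-,+,+)}$ induced by $f_1$; it has first fundamental form $h^2 dz d\bar z$ and Hopf differential $4Q dz^2$. Since $F_0 \in SO(2,1)$, the map $\Phi_2 := \Phi_1 F_0$ is again a conformal constant mean curvature $1/2$ immersion with the same first fundamental form and Hopf differential, so by the correspondence of \cite{KK2022} it is induced by a timelike minimal surface $f_2$ in $\Nil$ whose support function $h_2$ satisfies $h_2^2 = h^2$ and whose Abresch-Rosenberg differential is $Q dz^2$. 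As $h$ is nowhere zero by \eqref{supportfctofframednullscroll} and the domain is connected, $h_2 = \pm h$; and comparing $z$-derivatives shows that this $f_2$ is exactly the surface described in the statement. (This is also the converse half of Theorem \ref{thmnonuniqueness}.)

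The substance is to show that $f_2$ is a null scroll, for which I would first check that $\Phi_1$ is, up to a translation of $\Min_{(-,+,+)}$, a $B$-scroll. Write $f_1 = \gamma_1(s) \cdot \exp(t \widetilde B_1(s))$ with $\gamma_1$ a null curve in $\Nil$ carrying a null frame $(A_1, B_1, C_1)$ of curvatures $(k_1, 1/2)$, as in Theorem \ref{curvaturetoscroll}. Take a null curve $\widehat\gamma$ in $\Min_{(-,+,+)}$ whose null Frenet frame has curvatures $(k_1, 1/2)$ and corresponds to $(A_1, B_1, C_1)$ under the vector space identification $\mathfrak{nil}_3 \cong \Min_{(-,+,+)}$ used in Subsection \ref{sec:nullframe} (such $\widehat\gamma$ exists by the fundamental theorem for null curves in $\Min_{(-,+,+)}$). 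By the computation of Subsection \ref{subsec:Bscrollnullscroll}, the $B$-scroll of $\widehat\gamma$ induces a surface of the form \eqref{eq:B-scrolltypeminimal}, namely a null scroll with base curve of velocity $A_1$ and ruling $\widetilde B_1$. Since a null curve in $\Nil$ is determined by its velocity up to a left translation, this surface equals $f_1$ up to a left translation of $\Nil$, and a left translation changes neither the components $\phi^j$ nor the induced $\Min_{(-,+,+)}$ surface. Hence $\Phi_1$ is a $B$-scroll, and by Remark \ref{rem:B-scrolls}(1) so is $\Phi_2 = \Phi_1 F_0$. Applying Subsection \ref{subsec:Bscrollnullscroll} once more, $\Phi_2$ is induced by a minimal null scroll $\widehat f_2$ of the form \eqref{eq:B-scrolltypeminimal}; since $\widehat f_2$ and $f_2$ induce the same surface $\Phi_2$, they have the same $z$-derivative and therefore agree up to a left translation of $\Nil$. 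As left translations send null scrolls to null scrolls and preserve minimality, $f_2$ is a minimal null scroll.

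I expect the main obstacle to be this middle step: pinning down the identification of the null frame $(A_1, B_1, C_1)$ of $\gamma_1$ in $\Nil$ with the null Frenet frame of the base curve in $\Min_{(-,+,+)}$ -- in particular controlling the sign in the third coordinate coming from the factor $\ip$ on $\phi^3$ in the correspondence -- and verifying that running the construction of Subsection \ref{subsec:Bscrollnullscroll} in reverse really reproduces $f_1$ itself, not merely some timelike minimal surface with the same support function and Abresch-Rosenberg differential. Once that bookkeeping is done, the remaining steps are routine applications of Remark \ref{rem:B-scrolls} and of the material already developed in this section.
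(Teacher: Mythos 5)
Your proposal is correct and follows essentially the same route as the paper's proof: pass to the induced CMC $1/2$ surfaces in $\Min_{(-,+,+)}$, note that $\Phi_2={\Phi_1}F_0$ is a $B$-scroll because $\Phi_1$ is (Remark \ref{rem:B-scrolls}), pull back through the construction of Subsection \ref{subsec:Bscrollnullscroll}, and identify $f_2$ with the resulting null scroll up to a left translation. The only difference is that you spell out in more detail why $\Phi_1$ is a $B$-scroll (matching the null frame of $\gamma_1$ with a null Frenet frame in $\Min_{(-,+,+)}$), a step the paper simply asserts.
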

\begin{proof}
 Denote the surface in $\Min_{(-,+,+)}$ induced from $f_1$ by $\Phi_1$, and define another surface $\Phi_2$ as
 \begin{equation}\label{eqnonuniqueness2}
  {\Phi_2}_z = {\Phi_1}_z F_0.
 \end{equation}
 Then $\Phi_1$ and $\Phi_2$ differ by an isometry in $\Min_{(-,+,+)}$.
 Therefore $\Phi_2$ has constant mean curvature $1/2$, and then there exists a timelike minimal surface $\tilde f$ in $\Nil$ which has the support function $\pm h$ and the Abresch-Rosenberg differential $Qdz^2$ by Theorem \ref{thmnonuniqueness}.
 Since \eqref{eqnonuniqueness1} and \eqref{eqnonuniqueness2} imply that $f_2$ and $\tilde f$ differ by a left translation, $f_2$ has the support function $\pm h$ and the Abresch-Rosenberg differential $Qdz^2$.

 To complete the proof, we need only show that the timelike minimal surface $\tilde f$ becomes a null scroll.
 Since $\Phi_1$ is a $B$-scroll, $\Phi_2$ is also a $B$-scroll due to Remark \ref{rem:B-scrolls}.
 Thus the discussion in subsection \ref{subsec:Bscrollnullscroll} shows that $\tilde f$ is a null scroll in $\Nil$.
 Consiquently timelike surface $f_2$ is minimal null scroll and has the support function $\pm h$ and the Abresch-Rosenberg differential $Qdz^2$
\end{proof}
From the above discussion,
we can show the following theorem.
\begin{Theorem}\label{thmmain}
 If a null scroll $f$ is minimal,
 then the Abresch-Rosenberg differential $Qdz^2$ of $f$ satisfies $Q \overline Q=0$.
 Conversely,
 every timelike minimal surface with $Q \overline Q=0$ is of the form 
 \begin{equation*}
  \gamma(s) \cdot \exp (t \widetilde B(s))
 \end{equation*}
 where $\gamma$ is a null curve and $\widetilde B$ is a curve
 which takes values in the light cone in $\mathfrak{nil}_3$.
\end{Theorem}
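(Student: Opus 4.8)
The first assertion is already contained in the computation leading to \eqref{ARdiffofframedscroll}, so I would only recall it: for a minimal null scroll, choosing a null coordinate system $(x,y)$ satisfying \eqref{conformalcondition} (or its mirror $s_y=0$) and setting $z=lx+\bar l y$ with $l=(1+\ip)/2$, equation \eqref{ARdiffofframedscroll} exhibits $Q$ as $l$ times a real-valued function (respectively $\bar l$ times a real-valued function), so $Q\overline Q=0$ because $l\bar l=0$. Note that this uses only the minimality condition \eqref{minimality2}, not the null-frame normalization \eqref{nullfrenetframe}, hence applies to an arbitrary minimal null scroll.

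For the converse the plan is to reduce to the analogous and already known statement about $B$-scrolls in $\Min_{(-,+,+)}$ (Remark \ref{rem:B-scrolls}) and then transport the resulting structure back to $\Nil$ via the constructions of Section \ref{sec:characterization}. Let $f\colon\D\to\Nil$ be a timelike minimal surface with $Q\overline Q=0$. By the correspondence recalled after \eqref{integrablecondition}, $f$ induces a timelike surface $\Phi\colon\D\to\Min_{(-,+,+)}$ of constant mean curvature $1/2$ with first fundamental form $h^2\,dz\,d\bar z$ and Hopf differential $\widetilde Q\,dz^2=4Q\,dz^2$, so $\widetilde Q\,\overline{\widetilde Q}=16\,Q\overline Q=0$. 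It suffices to treat the two cases $Q\equiv0$ and ``$\widetilde Q$ nowhere zero'': in the first, $\Phi$ is an open piece of a de Sitter sphere, which is doubly ruled by null lines and hence a $B$-scroll, so the argument of the general case applies (alternatively one invokes directly the classification of $Q\equiv0$ timelike minimal surfaces in \cite{KK2022}); and in general one restricts to the open set where $\widetilde Q\neq0$ and patches.

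So assume $\widetilde Q\neq0$ on $\D$. By Remark \ref{rem:B-scrolls}(3), i.e.\ by \cite{C2012,FI2003,SGP2019}, the surface $\Phi$ is a $B$-scroll $\Phi(s,t)=\gamma(s)+tB(s)$, and its base null curve $\gamma$ carries a null Frenet frame $(A,B,C)$ with second curvature $k_2=1/2$, since by Remark \ref{rem:B-scrolls}(2) the mean curvature of a $B$-scroll equals $k_2$. Feeding this $B$-scroll into the construction of subsection \ref{subsec:Bscrollnullscroll} produces a timelike minimal surface in $\Nil$ of the shape \eqref{eq:B-scrolltypeminimal}, namely $\widetilde\gamma(s)\cdot\exp\bigl(t(B^1 e_1 + B^2 e_2 - B^3 e_3)\bigr)$, where $\widetilde\gamma$ is a null curve in $\Nil$ and $B^1 e_1 + B^2 e_2 - B^3 e_3$ lies in the light cone of $\mathfrak{nil}_3$ (it is null because $B$ is null in $\Min_{(-,+,+)}$); under the identification of subsection \ref{sec:nullframe} this is precisely a minimal null scroll $f_0$ of the form in Theorem \ref{curvaturetoscroll}, and it induces the same $B$-scroll $\Phi$. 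Hence $f$ and $f_0$ have the same first fundamental form and, through $4Q=\widetilde Q$, the same Abresch-Rosenberg differential, so their support functions agree up to sign and their Abresch-Rosenberg differentials coincide. Theorem \ref{thmnonuniqueness} then relates the derivatives of $f$ and $f_0$ by some $F_0\in SO(2,1)$ as in \eqref{nonuniqueness}, and Theorem \ref{uniquenullscroll}, applied with $f_1=f_0$, shows that $f$ itself is a minimal null scroll. (Any conformal reparametrizations entering along the way are harmless, since being a null scroll is a property of the surface and not of a chosen conformal coordinate.)

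The step I expect to require the most care is the patching across the locus $\{\widetilde Q=0\}$ together with the case $Q\equiv0$: since $Q$ is para-holomorphic with $Q\overline Q=0$, in suitable null coordinates it is $l$ (or $\bar l$) times a real-valued function of a single null variable, so $\{\widetilde Q=0\}$ is either all of $\D$ or a union of coordinate lines and bands, and one must verify that the $B$-scroll structure of $\Phi$---hence the null-scroll description of $f$---extends across it, using the regularity of $\Phi$ and the explicit relation $(s,t)=\bigl(8p(x),\,1/(p(x)+q(y))\bigr)$ between the null-scroll and conformal coordinates derived in the previous discussion. Everything else is bookkeeping on top of results already established in the paper.
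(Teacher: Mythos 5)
Your first half is exactly the paper's argument: the computation up to \eqref{ARdiffofframedscroll} exhibits $Q$ as $l$ times a real-valued function, so $Q\overline Q=0$ by $l\bar l=0$, and the paper itself remarks that this part of the computation does not use the null-frame conditions. For the converse you take a genuinely different route. The paper never invokes the classification of quasi-umbilic timelike surfaces in $\Min_{(-,+,+)}$; instead it works intrinsically. Writing the para-holomorphic $Q=lS(x)+\bar l T(y)$, the condition $Q\overline Q=ST=0$ forces $S\equiv 0$ or $T\equiv 0$ on the connected domain; say $T\equiv 0$. The explicit Liouville solution $w=\log\left(-p_xq_y/(p+q)^2\right)$ of \eqref{integrablecondition} fixes the coordinates $s=8p(x)$, $t=1/(p+q)$, and the comparison null scroll is then manufactured directly in $\Nil$ by prescribing the curvatures $k_1(s(x))=S(x)/(16p_x^2)$, $k_2=1/2$ and applying Theorem \ref{curvefromcurvatures} and Theorem \ref{curvaturetoscroll}; the final identification of $f$ with a null scroll via Theorems \ref{thmnonuniqueness} and \ref{uniquenullscroll} is the same as yours. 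What your route buys is economy (you reuse Remark \ref{rem:B-scrolls}(3) and subsection \ref{subsec:Bscrollnullscroll} instead of solving the frame ODE \eqref{system}); what the paper's route buys is that it is insensitive to the zeros of $Q$.

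That last point is where your proposal has a genuine gap. The cited classification applies only where $\widetilde Q\neq 0$, and you reduce to ``$Q\equiv 0$'' or ``$\widetilde Q$ nowhere zero'' plus a patching step that you acknowledge but do not carry out. Since $S$ is an arbitrary real-valued function of $x$, its zero set need not be a clean union of coordinate bands: it can be any closed subset of the $x$-line, and on its interior the induced surface is totally umbilic (a de Sitter piece, which is a $B$-scroll in infinitely many ways), while off it the $B$-scroll structure is essentially unique. Producing a single global base curve $\gamma$ and ruling $\widetilde B$ across such a set --- which is what the statement of Theorem \ref{thmmain} requires --- is not a routine continuity argument. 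The paper avoids the issue entirely because the curvature function $k_1=S/(16p_x^2)$ is perfectly well defined where $S$ vanishes, so one global null scroll is constructed in one stroke. To repair your argument you would either have to carry out this gluing carefully or, more simply, replace the appeal to the external $B$-scroll classification by the paper's prescribed-curvature construction, at which point the two proofs coincide. A smaller point worth recording: before applying Theorem \ref{thmnonuniqueness} you must reparametrize your lifted null scroll $f_0$ by the \emph{same} conformal coordinate $z$ as $f$, since that theorem compares support functions and Abresch--Rosenberg coefficients pointwise in a common coordinate; this is exactly what the explicit relation $s=8p(x)$, $t=1/(p+q)$ accomplishes in the paper.
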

\begin{proof}
 The first half of the claim is already proved.
 We prove the latter half.
 Let $f$ be a timelike minimal surface of the Abresch-Rosenberg differential $Qdz^2$ with $Q \overline Q=0$.
 We show that $f$ is a null scroll.
 Let us denote the support function of $f$ by $h$.
 It is only necessary to construct a minimal null scroll which has the support function $\pm h$ and the Abresch-Rosenberg differential $Qdz^2$
 because we have Theorem \ref{thmnonuniqueness} and Theorem \ref{uniquenullscroll}.

 For null scrolls $\gamma(s) \cdot \exp(t\widetilde B(s))$,
 $(A, B, A \times B)$ are not always frames.
 It is easy to check that
 $(A, B, C)$ for $\gamma(s) \cdot \exp(t\widetilde B(s))$ is not a frame
 if and only if
 $A$ and $B$ are linearly dependent.
 If $g(A, B)=0$ holds, minimal null scroll $\gamma(s) \cdot \exp(t\widetilde B(s))$ must be a vertical plane
 (see Proposition \ref{betaeq0} and Example \ref{exampleverticalplane}).
 The Abresch-Rosenberg differential of vertical plane is $0$. 
 From now on we assume that $f$ is not a vertical plane,
 and construct minimal null scrolls from the first curvature
 by Corollary \ref{Corcurvature}.
 The holomorphicity of $Q$ means that $Q$ can be separated into functions of $x$ and $y$,
 \begin{equation*}
  Q= l S(x) + \overline l T(y)
 \end{equation*}
 where $z= lx + \overline l y$, and $S$ and $T$ are real valued functions of $x$ and $y$, respectively.
 Then the condition $Q  \overline Q=0$ implies
 \begin{equation*}
  S=0 \quad \text{or} \quad T=0.
 \end{equation*}
 We prove the case of $T=0$.
 The solutions of the integrability conditions \eqref{integrablecondition} are given by
 \begin{equation*}
  w= \log \left( -\frac{p_x q_y} {(p(x)+q(y))^2} \right)
 \end{equation*}
 where $p$ and $q$ are functions of $x$ and $y$, respectively, such that $p(x)+q(y) \neq 0$ and $p_x q_y<0$.
 Let us define functions $k_1(s)$ and $k_2(s)$ by
 \begin{equation}\label{ARdiffcurvature}
  k_1(s(x))
  =
  \frac{S(x)}{16{p_x}^2},
  \quad
  k_2(s(x))
  =
  \frac12
 \end{equation}
 where the parameter $s$ is defined by $s(x)=8p(x)$. 
 Then from Theorem \ref{curvefromcurvatures}
 we can obtain a null frame $(A, B, C)$ and a null curve $\gamma(s)$
 which has the first and second curvature $k_1(s)$ and $k_2(s)$ with respect to $(A, B, C)$.
 Now, we consider the map
 \begin{equation}\label{nullscrollxy}
  \gamma(s(x)) \cdot \exp \left( \frac1{p(x) + q(y)} \widetilde B(s(x)) \right).
 \end{equation}
 Here,
 $\widetilde B= \sum_{i=1}^3 B^ie_i$ is the curve in $\mathfrak{nil}_3$
 determined from $B=B^1e_1 + B^2e_2 - B^3e_3$.
 Direct computations show that for \eqref{nullscrollxy},
 the Abresch-Rosenberg differential is $l S(x) dz^2$
 and the support function can be represented as $4 {\rm sgn}(g(f_s \times f_t, E_3)) e^{w/2}$.
 Since the null scroll \eqref{nullscrollxy}
 has the support function $4e^{w/2}$ or $-4e^{w/2}$ and shares the Abresch-Rosenberg differential with $f$,
 Theorem \ref{uniquenullscroll} shows timelike minimal surface $f$ has to be a null scroll.
\end{proof}
\begin{Example}\label{example_horizontalumbrella}
 Let us construct timelike minimal surfaces
 with vanishing Abresch-Rosenberg differential
 except for vertical planes.
 Solve the differential equation \eqref{system}
 under the condition $k_1=0$, $k_2=1/2$ and the initial conditions $A^i(0)=A_0^i$, $B^i(0)=B_0^i$, and $C^i(0)=C_0^i$.
 Then the null frame is given by
 \begin{equation*}
  A^i(s)=A_0^i,
  \quad
  B^i(s)=-\frac{s^2}8 A_0^i + \frac s2 C_0^i + B_0^i,
  \quad
  C^i(s)=-\frac s2 A_0^i +C_0^i,
 \end{equation*}
 and the base curve $\gamma$ is given by $\gamma(s)=(A_0^1 s, A_0^2 s, A_0^3 s)$.
 Hence the null scroll $\gamma(s) \cdot \exp(t\widetilde B(s))$
 obtained according to Theorem \ref{curvaturetoscroll} from $k_1=0$ and $k_2=1/2$ is written explicitly
 \begin{align*}
  (A_0^1s, \ A_0^2 s, \ A_0^3 s)
  \cdot
  (t(-\frac{s^2}8 A_0^1 + \frac s2 C_0^1 + B_0^1),
    \ t(-\frac{s^2}8 A_0^2 + \frac s2 C_0^2 + B_0^2),
    \ t(\frac{s^2}8 A_0^3 - \frac s2 C_0^3 - B_0^3))\\
  =
  \left(
    (s-\frac{s^2 t}8)A_0^1 +tB_0^1 + \frac{st}2 C_0^1,
    \ (s-\frac{s^2 t}8)A_0^2 +tB_0^2 + \frac{st}2 C_0^2,
    \ (s-\frac{s^2 t}8)A_0^3 -tB_0^3
  \right).
 \end{align*}
 Since, by $C \times A=A$ and $B \times C=B$, a simple computation shows
 \begin{equation*}
  \det
  \begin{pmatrix}
   A_0^1&B_0^1&C_0^1\\ A_0^2&B_0^2&C_0^2\\ A_0^3&-B_0^3&0
  \end{pmatrix}
  =0,
 \end{equation*}
 the vectors $(A_0^1, A_0^2, A_0^3)$, $(B_0^1, B_0^2, -B_0^3)$, and $(C_0^1, C_0^2, 0)$ are linearly dependent at each point.
 Therefore the minimal null scroll given from $k_1=0$ lies on a plane in $\mathbb{R}^3$ (Figure $1$).
 The surfaces given in Example \ref{example_horizontalumbrella} are called {\it horizontal umbrellas}.
 The condition $k_1=0$ means the vanishing of the Abresch-Rosenberg differential.
 \begin{figure}[t]\label{ex_horizontalumbrella}
 \centering
 \includegraphics[width=6cm]{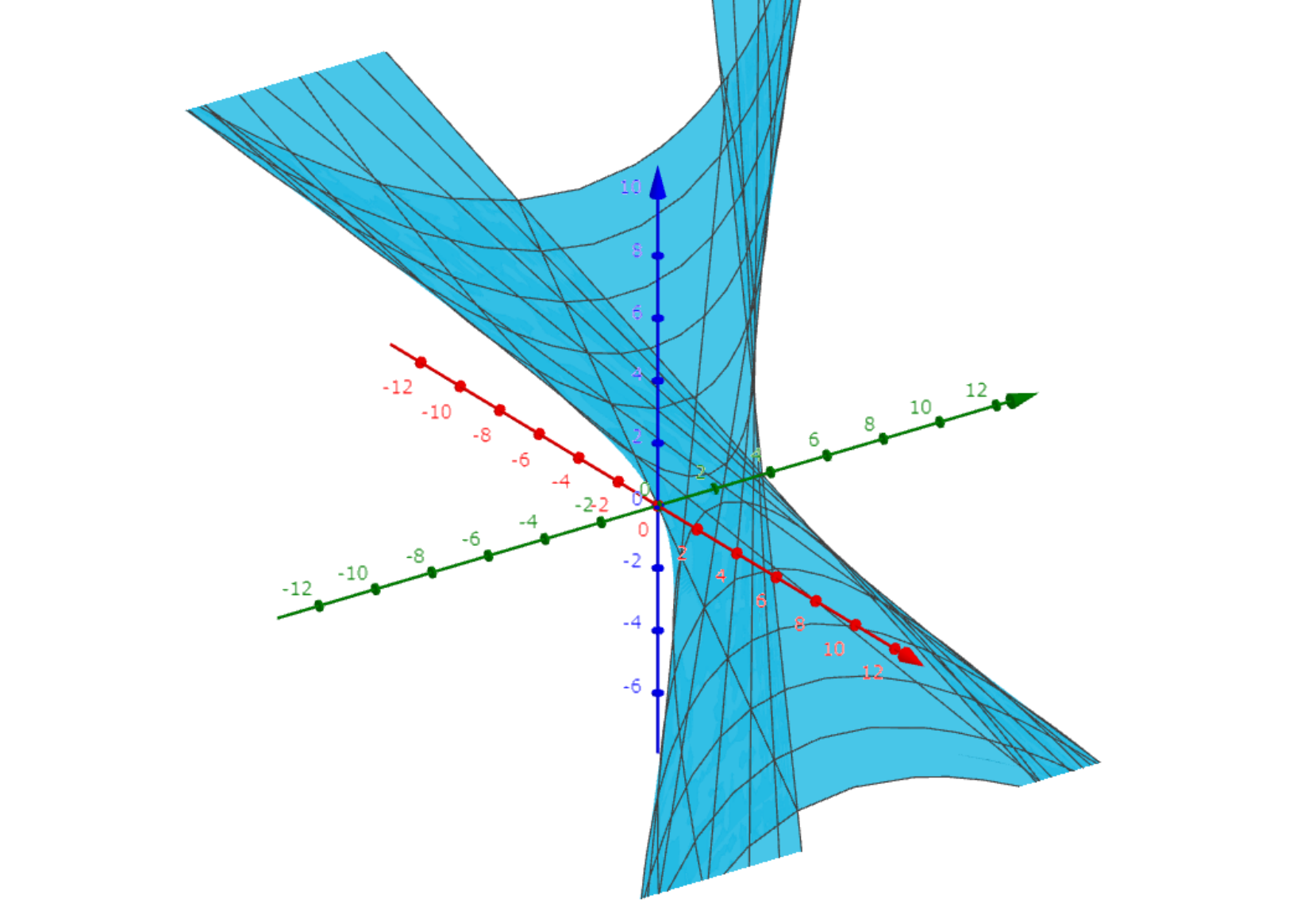}
 \caption{Horizontal umbrella: Timelike minimal surface with the Abresch-Rosenberg differential vanishing anywhere.
 They are included in planes.}
 \end{figure}
\end{Example}


\section{Construction of minimal null scrolls with prescribed rulings}\label{sec:construct}
In Section $\ref{sec:characterization}$,
we observed that timelike minimal surfaces with $Q \overline Q=0$ are obtained as minimal null scrolls,
in particular as null scrolls constructed from null curves with null frames and the second curvatures $k_2=1/2$.
However,
the construction from curvatures is troublesome because of the need to solve a system of differential equations.
In this section,
we will construct the minimal null scrolls with prescribed rulings
using only elementary computations.


\subsection{Construction from the condition $g(A, B)=2\beta$ and $\beta=0$}
It follows from Theorem \ref{thm minimal condition} that
we need to consider only two cases $g(A,\widetilde B)=0$ and $g(A, B)=2\beta$
for constructing minimal null scrolls.
First,
we consider the case of $g(A, B)= 2\beta$.
The following proposition gives the minimal null scrolls
satisfying the minimality condition $g(A, B)=2\beta$ and $\beta=0$
as a part of vertical planes.
\begin{Proposition}\label{betaeq0}
 Let $\gamma$ be an affine null line in $\Nil$
 and
 $\widetilde B= \sum_{i=1}^3 B^ie_i$ be a curve
 which takes values in the light cone in $\mathfrak{nil}_3$,
 satisfies $B^3 \neq 0$,
 and induces the null vector $B=B^1e_1+B^2e_2-B^3e_3$ linearly dependent to $\gamma^{-1} \frac{d\gamma}{ds}$.
 Then the null scroll $\gamma(s) \cdot \exp (t \widetilde B(s) )$ is minimal
 and satisfies the minimality condition $g(A, B)=2 \beta$ with $\beta = 0$.
 Conversely,
 if a minimal null scroll has a ruling $\widetilde B$ with $\beta=0$ and satisfies the minimality condition $g(A, B)=2\beta$,
 the base curve is an affine null line and the ruling $\widetilde B$ induces a null vector field $B$ linearly dependent on the velocity.
\end{Proposition}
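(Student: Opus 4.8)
The plan is to translate both conditions into linear algebra, using (a) the description of affine null lines through their velocity and (b) the elementary fact that in a three-dimensional Lorentzian vector space two linearly independent null vectors are never orthogonal. As a dictionary, I would first record that for $\gamma(s)=p\cdot\exp(a(s)v)$ with $p\in\Nil$ and $v\in\mathfrak{nil}_3$ fixed, the explicit group law together with $\exp(x_1e_1+x_2e_2+x_3e_3)=(x_1,x_2,x_3)$ shows that $\gamma$ traces an affine line in $\R^3$ and that $\gamma^{-1}\frac{d\gamma}{ds}=a'(s)\,v$; so the velocity $A$ is everywhere proportional to the fixed vector $v$, whence $A'\parallel A$ pointwise, and $\gamma$ is null exactly when $v$ is. Conversely, if a regular curve $\gamma$ has velocity $A$ with $A'=\nu A$ for some function $\nu$, then reparametrizing by $\tilde s$ with $d\tilde s/ds=\exp(\int\nu)$ turns the velocity into the constant vector $c=A/\exp(\int\nu)$, so $\gamma(\tilde s)=\gamma(\tilde s_0)\cdot\exp((\tilde s-\tilde s_0)c)$ is an affine line, null precisely when $A$ is null. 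I would also fix the linear isometric involution $X=\sum_iX^ie_i\mapsto\widehat X:=X^1e_1+X^2e_2-X^3e_3$ (reflection in the $e_1e_2$-plane); this hat map commutes with $s$-differentiation, preserves the light cone, and by Theorem \ref{thm minimal condition} the vector field attached to a ruling $\widetilde B$ is exactly $B=\widehat{\widetilde B}$.

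For the direct implication, since $\gamma$ is a null curve and $\widetilde B$ takes values in the light cone, $A$ and $B$ are nowhere-zero null vector fields, and the hypothesis that $B$ is a multiple of $A$ gives $B=\mu A$ with $\mu$ nowhere zero; hence $g(A,B)=\mu\,g(A,A)=0$. Applying the hat map gives $\widetilde B=\mu\widehat A$, and since $\gamma$ is an affine null line $A$ (hence $\widehat A$, hence $\widetilde B$ and $\widetilde B'$) is proportional to a fixed vector, so $\widetilde B\times\widetilde B'=0$ and therefore $\beta=0$ by the uniqueness in Lemma \ref{lemmincondition}. Substituting $B=\mu A$ and $A'=\nu A$ into the expression for $g_{12}$ collapses it to $g_{12}=g(A,\widetilde B)=\mu\,g(A,\widehat A)=-2\mu(A^3)^2$, which is nonzero because $B^3=-\mu A^3\neq0$; thus $\gamma(s)\cdot\exp(t\widetilde B(s))$ is a genuine timelike null scroll (its first fundamental form, with $g_{22}=0$, has determinant $-g_{12}^2<0$), and as $g(A,B)=0=2\beta$, Theorem \ref{thm minimal condition} shows it is minimal and the minimality condition is realized through $g(A,B)=2\beta$.

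For the converse, start from a minimal null scroll whose ruling has $\beta=0$ and which satisfies $g(A,B)=2\beta$, i.e.\ $g(A,B)=0$. Again $A$ and $B$ are nowhere-zero null vectors, so by fact (b) above $B=\mu A$ with $\mu$ nowhere zero; in particular $B$ is linearly dependent on the velocity. Applying the hat map, $\widetilde B=\mu\widehat A$, and $\beta=0$ forces $0=\widetilde B\times\widetilde B'=\mu^2(\widehat A\times\widehat A')$, so $\widehat A$ and $\widehat A'$ are linearly dependent; since the hat map is a linear isomorphism commuting with $s$-differentiation, $A$ and $A'$ are linearly dependent too, i.e.\ $A'=\nu A$ for some function $\nu$, and by the dictionary above the base curve is an affine null line. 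I expect the only real obstacle to be this dictionary, especially its converse half (reparametrizing to a constant velocity and checking that nullity is preserved); afterwards the argument reduces to the orthogonality-of-null-vectors fact, two uses of the hat map, and one explicit evaluation of $g_{12}$.
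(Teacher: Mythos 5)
Your proof is correct and follows essentially the same route as the paper: both directions reduce to the facts that two nonzero null vectors in a Lorentzian $3$-space are orthogonal iff linearly dependent, that $\beta=0$ iff $\widetilde B\times\widetilde B'=0$ iff $\widetilde B$ points in a fixed null direction, and that a null curve whose velocity stays proportional to a fixed vector is an affine null line. Your explicit dictionary for affine lines, the hat map, and the evaluation $g_{12}=-2\mu(A^3)^2$ just make explicit what the paper states more briefly (the non-degeneracy check appears there only in the remark following the proposition).
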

\begin{proof}
 First,
 we take the minimal null scroll given by
 an affine null line
 $\gamma(s) = K_1(s)(c^1, c^2, c^3)$
 and a ruling $\widetilde B=\sum_{i=1}^3 B^ie_i$
 which induces the null vector fields $B=B^1e_1+B^2e_2-B^3e_3$
 linearly dependent to the velocity $\gamma^{-1} \frac{d \gamma}{ds}$,
 that is,
 \begin{equation*}
  g(\gamma^{-1} \frac{d \gamma}{ds}, B) = \frac{dK_1}{ds}\left( -c^1B^1+ c^2B^2- c^3B^3 \right) =0.
 \end{equation*}
 Here $K_1$ is a function
 and $c^j \ (j=1,2,3)$ are constants that are not $0$ simultaneously
 and satisfy $-(c^1)^2 + (c^2)^2 + (c^3)^2 =0$.
 By the linear dependence,
 the vector field $B$ can be rewritten as
 $B=K_2 (c^1e_1 + c^2e_2 - c^3e_3)$
 for some smooth function $K_2$.
 Then a simple computation shows
 \begin{equation*}
  B \times B'=0,
 \end{equation*}
 that means $\beta=0$.
 Hence the null scroll  $\gamma(s) \cdot \exp (t \widetilde B(s) )$ fulfills the minimality condition
 $g(A, B)=2\beta$ with $\beta=0$.
 Conversely, we assume $\beta=0$.
 In this case,
 the vector field ${\widetilde B}'$ is null.
 Moreover the condition
 \begin{equation*}
 \widetilde B \times {\widetilde B}' = 0
 \end{equation*}
 means the vector field ${\widetilde B}'$ is linearly dependent to $\widetilde B$ at each point,
 and then
 it follows the differential equations
 \begin{equation*}
  {B^j}' = k B^j \quad j=1,2,3
 \end{equation*}
 for some function $k$.
 Hence the ruling $\widetilde B$ has to be of the form
 \begin{equation}\label{1B}
  \widetilde B(s) = e^{ \int k ds } \left( \sum c^je_j \right)
 \end{equation}
 where $c^j \ (j=1,2,3)$ are some constants.
 Thus,
 by the minimality condition $g(A, B)=2 \beta$ with $\beta=0$,
 we obtain the base curve $\gamma$ with the initial condition $\gamma(0)=(0,0,0)$
 explicitly
 \begin{align}\label{1A}
  \gamma^{-1}\frac{d\gamma}{ds}(s)&= h(s) e^{ \int k ds } \left( c^1e_1 + c^2e_2 - c^3e_3 \right),\\
  \gamma(s)&= \int_0^s\left( h(s) e^{ \int k d s } \right) ds (c^1, c^2, -c^3)\notag
 \end{align}
 for some function $h$.
 Therefore the base curve $\gamma$ draws an affine null line.
\end{proof}
\begin{Remark}
 The first and second fundamental forms of minimal null scrolls
 defined from \eqref{1B} and \eqref{1A}
 can be computed as
 \begin{equation*}
  I=-2g_{12} \left( tk(s)ds^2 +dsdt \right),
 \end{equation*}
 \begin{equation*}
  II=\frac{ g_{12} }{ |g_{12}| } e^{2\int k(s)ds} (c^3)^2 \left( \frac{( h(s)+tk(s) ) ( -h(s) + tk(s) )} {h(s)} ds^2 + 2tk(s) dsdt + dt^2 \right)
 \end{equation*}
 where $g_{12}=-2h(s)e^{2\int k(s)ds} (c^3)^2$.
 Hence the non-degeneracy of $I$ means vanishing $c^3$ nowhere on the domain,
 and the first fundamental form degenerates if and only if the second fundamental form vanishes.
 Furthermore if the first fundamental form is non-degenerate then the second fundamental form is also non-degenerate.
\end{Remark}
Proposition \ref{betaeq0} implies that
minimal null scrolls,
satisfying the minimality condition $g(A, B)=2\beta$ and $\beta=0$,
can be constructed
from an arbitrary null vector field $B$ which has $B^3 \neq 0$ and $\beta=0$,
by defining the velocity $A$ of the base curves as $A=hB$.
A direct computation shows the vector field $E_3$ is tangent to these surfaces,
and then they are part of vertical surfaces,
that is,
Hopf cylinders.
Therefore these minimal null scrolls contain the affine lines in $x_3$-axis direction.
Furthermore,
since the direction of the ruling \eqref{1B} is independent to the parameter $s$,
these minimal null scrolls are part of planes spanned by $(c^1, c^2, c^3)$ and $(0, 0, 1)$.
\begin{Example}[vertical plane]\label{exampleverticalplane}
 Let $\theta \in (0, \pi)$ be a constant.
 Take a null vector field $\widetilde B$ as $\widetilde B(s) = s  e_1 + s\cos\theta e_2 + s \sin\theta e_3$,
 and define a null vector field $A$ as $A= e_1 +\cos\theta e_2 -\sin\theta e_3$.
 Then the null scroll
 \begin{equation*}
  \gamma(s) \cdot \exp(t \widetilde B(s))
  =
  \left(
    (1+t)s,
    (1+t)s \cos\theta,
    (1+t)s \sin\theta-2s\sin\theta
  \right)
 \end{equation*}
 is a timelike minimal surface (Figure 2).
 \begin{figure}[t]
 \centering
 \includegraphics[width=6cm]{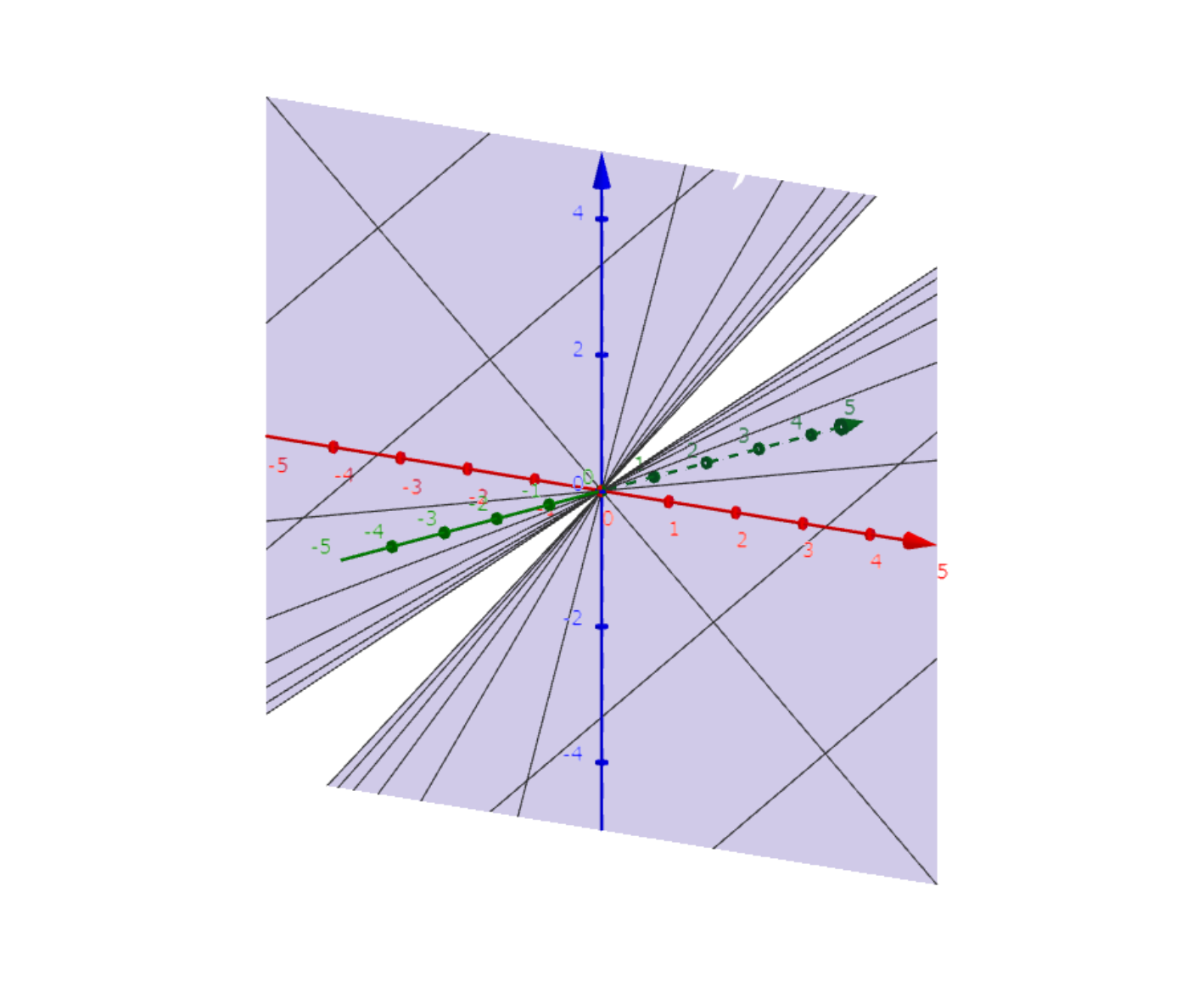}
 \caption{Vertical plane: Minimal null scrolls with the minimality condition $g(A, B)=2\beta$ and $\beta=0$.}
 \end{figure}
\end{Example}


\subsection{Construction from the condition $g(A, B ) = 2\beta$ and $\beta \neq0$}
From now on,
we will construct the minimal null scrolls which satisfy the minimality condition $g(A, B)=2\beta$
with $\beta$ vanishing nowhere.
When $\beta$ does not vanish anywhere,
if necessary replacing $\widetilde B$ with $\frac{1}{2\beta}\widetilde B$,
without loss of generality, we can regard $\beta$ as $1/2$.
For a curve which takes values in the light cone in $\mathfrak{nil}_3$
with $\beta=1/2$,
the base curve $\gamma$ can be constructed as follows.

Since for a null scroll $\gamma(s) \cdot \exp( t\widetilde B(s) )$,
the vector field $\gamma^{-1}\frac{d\gamma}{ds} \times B$ belongs to $B^\perp$ at each point,
it can be represented as
\begin{equation*}
 \gamma^{-1}\frac{d\gamma}{ds} \times B= a B' + b B
\end{equation*}
for some functions $a$ and $b$.
Therefore when the minimality condition $g(\gamma^{-1}\frac{d\gamma}{ds}, B)=2\beta$ with $\beta=1/2$ holds,
the equations
\begin{equation*}
 g(\gamma^{-1}\frac{d\gamma}{ds} \times B, \gamma^{-1}\frac{d\gamma}{ds})=0
 \quad \text{and} \quad
 g(\gamma^{-1}\frac{d\gamma}{ds} \times B, B')= g(\gamma^{-1}\frac{d\gamma}{ds}, B \times B')=1/2
\end{equation*}
derive $a=2$ and $b=-2g(\gamma^{-1}\frac{d\gamma}{ds}, B')$.
Thus it can be seen that for any curve $\widetilde B$ in the light cone in $\mathfrak{nil}_3$ except for the origin,
determining the direction for $\gamma^{-1}\frac{d\gamma}{ds} \times B$
is equivalent to
taking a function $g(\gamma^{-1}\frac{d\gamma}{ds}, B')$.
On the other hand,
by Lemma \ref{Lorentzplane},
taking a null vector $\gamma^{-1}\frac{d\gamma}{ds}(s)$ which satisfies
\begin{equation*}
 g(\gamma^{-1}\frac{d\gamma}{ds}(s), B(s))=1
\end{equation*}
coincides with taking the Lorentz plane including $B(s)$.
Since determining a Lorentz plane is equivalent to determining a spacelike normal direction,
we can prove the following proposition.

\begin{Proposition}\label{betaneq0}
 For any minimal null scroll $\gamma(s) \cdot \exp( t\widetilde B(s) )$ which satisfies the minimality condition $g(A, B) = 2\beta$ with $\beta=1/2$,
 it follows
 \begin{equation*}
  \gamma^{-1} \frac{d\gamma}{ds}
  =
  -4\left(
       2g(B'', B'')B + B''
     \right)
  -
  2b\left(
      B' + \frac{b}4 B
     \right)
 \end{equation*}
 where $b= -2g(\gamma^{-1} \frac{d \gamma}{ds}, B')$.
 Conversely,
 for any curve $\widetilde B$ which takes values in the light cone in $\mathfrak{nil}_3$
 with $\beta=1/2$ and any real valued function $b$,
 define a $\mathfrak{nil}_3$-valued vector field $A$ as
 \begin{equation*}
  A= -4\left(
       2g(B'', B'')B + B''
     \right)
  -
  2b\left(
      B' + \frac{b}4 B
     \right).
 \end{equation*}
 Then the curve $\gamma$ which has the velocity $A$ is null
 and
 it follows that $g(A, B)=1$.
 Thus the null scroll over $\gamma$ with the ruling $\widetilde B$ is minimal.
\end{Proposition}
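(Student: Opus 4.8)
The plan is to phrase everything in terms of the flipped vector $B=B^1e_1+B^2e_2-B^3e_3$ and to record first the algebraic identities that $B$ inherits from $\widetilde B$. The map $\iota(x_1,x_2,x_3)=(x_1,x_2,-x_3)$ is a $g$-isometry of $\mathfrak{nil}_3$ with $\det\iota=-1$, so it preserves the metric and reverses the vector product, $\iota(X\times Y)=-(\iota X)\times(\iota Y)$. Since $B=\iota(\widetilde B)$ and $\iota$ commutes with $d/ds$, Lemma \ref{lemmincondition} with $\beta=1/2$ gives at once $g(B,B)=0$, $g(B,B')=0$, $g(B',B')=\beta^2=\tfrac14$ and $B\times B'=\beta B=\tfrac12 B$. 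Differentiating these yields $g(B'',B)=-g(B',B')=-\tfrac14$, $g(B'',B')=0$ and $B\times B''=\tfrac12 B'$, hence $B''\times B=-\tfrac12 B'$ and $B'\times B=-\tfrac12 B$. Because $\beta\neq0$, the vectors $B,B'$ are linearly independent and span the (degenerate) plane $B^\perp$.

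For the forward direction set $A=\gamma^{-1}\tfrac{d\gamma}{ds}$, so that $g(A,A)=0$ ($\gamma$ null) and $g(A,B)=2\beta=1$ by hypothesis. Since $g(A\times B,B)=0$, write $A\times B=aB'+bB$. Pairing with $B'$ and using the cyclic identity $g(A\times B,B')=g(B\times B',A)=g(\tfrac12 B,A)=\tfrac12$ gives $a=2$; pairing with $A$ and using $0=g(A\times B,A)=2g(A,B')+b$ gives $b=-2g(A,B')$, the constant in the statement. Now put $A_0:=-4B''-2bB'$; the identities above give $A_0\times B=-4(B''\times B)-2b(B'\times B)=2B'+bB=A\times B$, so $A-A_0=\lambda B$ for a function $\lambda$. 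Since $g(A_0,B)=-4g(B'',B)=1$ and $g(A_0,A_0)=16\,g(B'',B'')+b^2$ (the remaining terms carrying $g(B'',B')$ vanish), the equation $0=g(A,A)=g(A_0,A_0)+2\lambda$ forces $\lambda=-8\,g(B'',B'')-\tfrac{b^2}{2}$, and substituting gives $A=-4B''-2bB'-\bigl(8\,g(B'',B'')+\tfrac{b^2}{2}\bigr)B$, which is exactly the claimed formula.

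For the converse, given $\widetilde B$ with $\beta=1/2$ and an arbitrary real function $b$, define $A$ by that formula. Two short computations with the collected identities give $g(A,B)=-4g(B'',B)=1$ and, after cancelling the terms carrying $g(B,B)$, $g(B'',B')$ or $g(B',B)$, $g(A,A)=16\,g(B'',B'')+b^2+2\bigl(-8\,g(B'',B'')-\tfrac{b^2}{2}\bigr)=0$. Then the curve $\gamma$ with $\gamma^{-1}\tfrac{d\gamma}{ds}=A$ has $g(\gamma',\gamma')=g(A,A)=0$ by left invariance of $g$, and $\gamma'\neq0$ since $g(A,B)=1$, so $\gamma$ is null; moreover $g(A,B)=1=2\beta$ is precisely the minimality condition of Theorem \ref{thm minimal condition}, so the null scroll $\gamma(s)\cdot\exp(t\widetilde B(s))$ is minimal.

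The one genuinely delicate point is the middle step of the forward direction: because $B$ is null, the operator $X\mapsto X\times B$ has the one-dimensional kernel $\R B$, so $A\times B$ determines $A$ only up to an additive multiple of $B$, and it is the nullity $g(A,A)=0$ that pins down this multiple. Everything else is bookkeeping with the flip $\iota$ and repeated differentiation of $g(B,B)=0$ and $B\times B'=\tfrac12 B$.
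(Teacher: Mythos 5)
Your proof is correct and follows essentially the same route as the paper's: both arguments first derive $A\times B=2B'+bB$ with $b=-2g(A,B')$ from the minimality condition and the nullity of $A$, and then recover $A$ using the identities $g(B'',B)=-\tfrac14$, $g(B'',B')=0$, $g(A,B)=1$, $g(A,A)=0$. The only cosmetic difference is in how the residual one-parameter ambiguity is resolved: the paper expands $A$ in the basis $\{B,\ bB'+2B''\}$ of $(2B'+bB)^{\perp}$ and solves for the two coefficients, while you invert the map $X\mapsto X\times B$ up to its kernel $\R B$ and fix the multiple of $B$ with $g(A,A)=0$ --- the computations are equivalent.
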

\begin{proof}
 Let $\gamma(s) \cdot \exp( t\widetilde B(s) )$ be a minimal null scroll
 which satisfies the minimality condition $g(A, B)=2\beta$ with $\beta=1/2$.
 Since
 \begin{align*}
  g(B, b B'+2B'') &= 2g(B, B'') = -\frac12 \neq0,\\
  g(2B'+bB, B) &= 0,\\
  g(2B'+bB, bB'+2B'') &= 2bg(B', B')+2bg(B, B'') = 0
 \end{align*}
 hold,
 we can see that the velocity $ \gamma^{-1}\frac{d\gamma}{ds} $ belongs to
 \begin{equation*}
 (\gamma^{-1}\frac{d\gamma}{ds} \times B)^{\perp}
 =
 (2B' + bB)^{\perp}
 =
 span\{B, bB' + 2B''\}
 \end{equation*}
 at each point.
 Denoting $\gamma^{-1}\frac{d\gamma}{ds}$ as $uB + v (bB' +2B'')$,
 simple computations of
 \begin{equation*}
 g \left( \gamma^{-1}\frac{d\gamma}{ds}, \gamma^{-1}\frac{d\gamma}{ds} \right) = 0
 \quad {\it and} \quad
 g \left( \gamma^{-1}\frac{d\gamma}{ds}, B \right) = 1
 \end{equation*}
 derive $u=-8g(B'', B'') - b^2/2$ and $v=-2$.
 Therefore we obtain
 \begin{equation*}
 \gamma^{-1} \frac{d\gamma}{ds}
  =
  -4\left(
       2g(B'', B'')B + B''
     \right)
  -2b \left( B'+\frac{b}4B \right).
 \end{equation*}
 Conversely,
 for an arbitrary curve $\widetilde B$
 which takes values in the light cone in $\mathfrak{nil}_3$ except for the origin
 and derives $\beta=1/2$,
 and an arbitrary function $b$,
 set a $\mathfrak{nil}_3$-valued vector field $A$ as
 \begin{equation*}
  A
  =
  -4\left(
       2g(B'', B'')B + B''
     \right)
  -2b \left( B'+\frac{b}4B \right).
 \end{equation*}
 By some straightforward computations,
 it can be seen that
 $A$ is the null vector field which belongs to $(2B' + bB)^{\perp} = span\{B, bB' + 2B''\}$
 and
 satisfies $g(A, B)=1$.
 Then by Theorem \ref{thm minimal condition},
 the null scroll over the null curve which has the velocity $A$ with the ruling $\widetilde B$ is minimal.
\end{proof}
\begin{Remark}\label{remreplaceb}
 The null vector field $A$ given in Proposition \ref{betaneq0} with $b=0$
 and $B$ define a null frame $(A, B, 2B')$,
 and then
 the curvatures are $k_1=4g(B'', B'')$ and $k_2=1/2$.
 Hence Example \ref{example_horizontalumbrella} shows
 if a curve $\widetilde B$ has $\beta=1/2$
 and $B''$ is null,
 $\widetilde B$ constructs a horizontal umbrella by Proposition \ref{betaneq0} with $b=0$ (see Example \ref{parabola ruling}).
 Moreover, although it is hard to compute the Abresch-Rosenberg differential in general,
 it can be obtained easily when $b=0$.
 Since we have \eqref{ARdiffcurvature} with null frame $(A, B, 2B')$,
 we can see $S=16{p_x}^2 k_1$. Therefore we have $Q=l64{p_x}^2 g(B'', B'')$. 
\end{Remark}
\begin{Example}[Circle ruling]
Set a ruling $\widetilde B=\frac12 e_1+ \frac12 \cos(s) e_2+\frac12 \sin(s) e_3$.
This describes a circle (left of Figure $3$).
Then we have $\beta=1/2$.
By Proposition \ref{betaneq0} with $b=0$,
the velocity of the base curve is given by
\begin{equation*}
 A=-e_1 + \cos(s) e_2 - \sin(s) e_3.
\end{equation*}
Therefore the base curve is computed as
\begin{equation*}
 \gamma(s)=\left( -s, \sin(s), -\frac12 s \sin(s) \right).
\end{equation*}
Hence the minimal null scroll $f(s, t)$ (right of Figure $3$) is given by
\begin{equation*}
 f(s, t)=
 \left( -s + \frac12 t, \sin(s) + \frac12 t \cos(s), -\frac12 s \sin(s) + \frac14 t \sin(s) - \frac14 t s \cos(s) \right).
\end{equation*}
\begin{figure}[t]
 \centering
 \includegraphics[width=5cm]{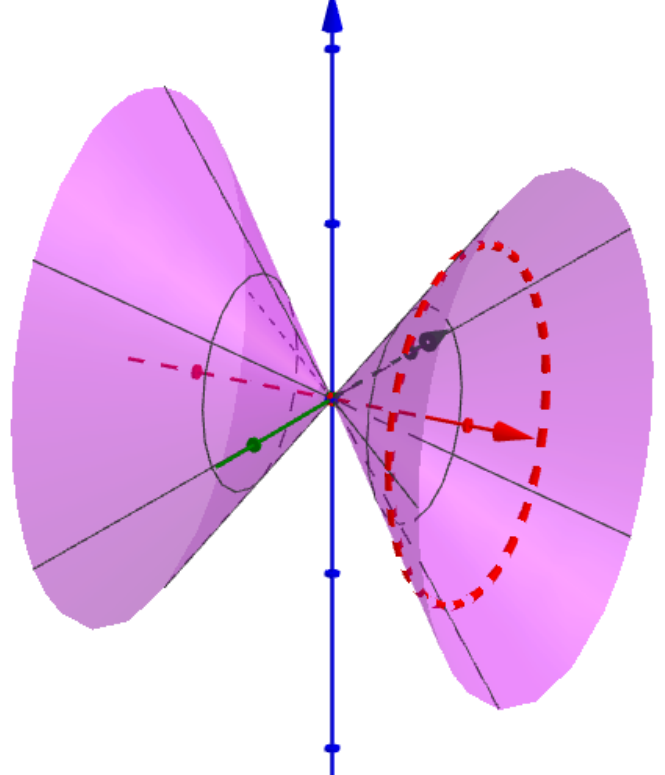}
 \includegraphics[width=5cm]{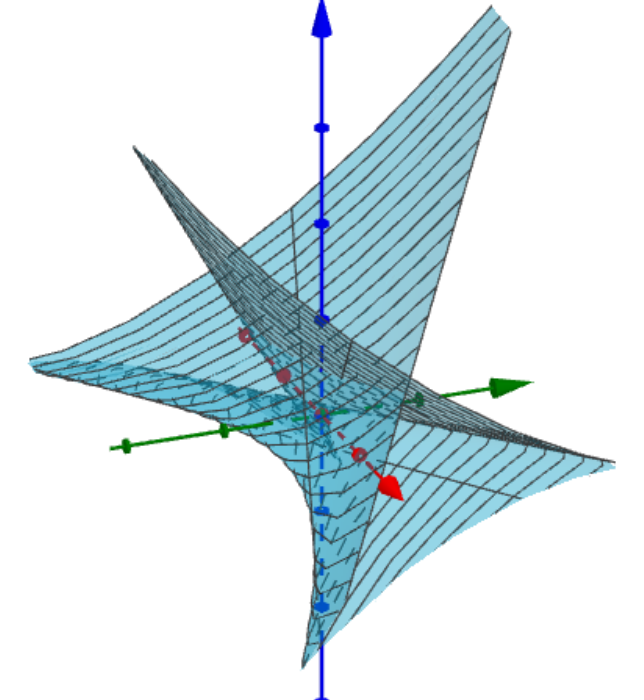}
 \caption{Example of minimal null scrolls constructed from ruling valued
 in a circle (right) and the image of its ruling (left, dash line).}
 \end{figure}
\end{Example}
\begin{Example}[Hyperbola ruling]
Set a ruling $\widetilde B=\frac12 \cosh(s) e_1+ \frac12 \sinh(s) e_2- \frac12 e_3$.
This describes a hyperbola (left of Figure $4$).
Then we have $\beta=\frac12$.
By Proposition \ref{betaneq0} with $b=0$,
the velocity of the base curve is given by
\begin{equation*}
 A=-\cosh(s) e_1 - \sinh(s) e_2 + e_3.
\end{equation*}
Therefore the base curve is computed as
\begin{equation*}
 \gamma(s)=\left( -\sinh(s), -\cosh(s) +1, \frac12 s +\frac12  \sinh(s) \right).
\end{equation*}
Hence the minimal null scroll $f(s, t)$ (right of Figure $4$) is given by
\begin{equation*}
 f(s, t)=
 \left( -\sinh(s) +\frac12 t \cosh(s), -\cosh(s) +1 + \frac12 t \sinh(s), \frac12 s +\frac12 \sinh(s) - \frac14 t -\frac14 t \cosh(s) \right).
\end{equation*}
\begin{figure}[h]
 \centering
 \includegraphics[width=5cm]{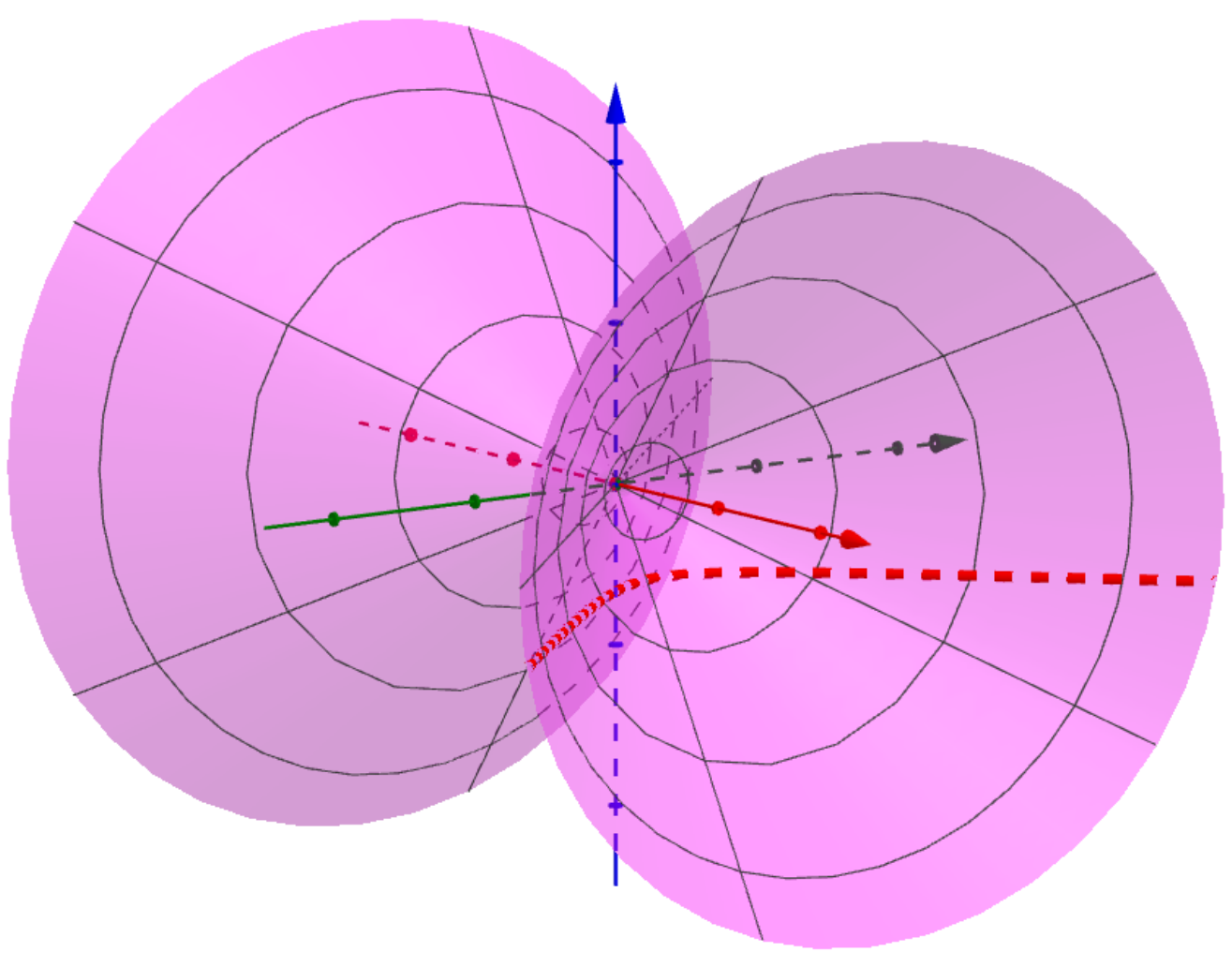}
 \includegraphics[width=5cm]{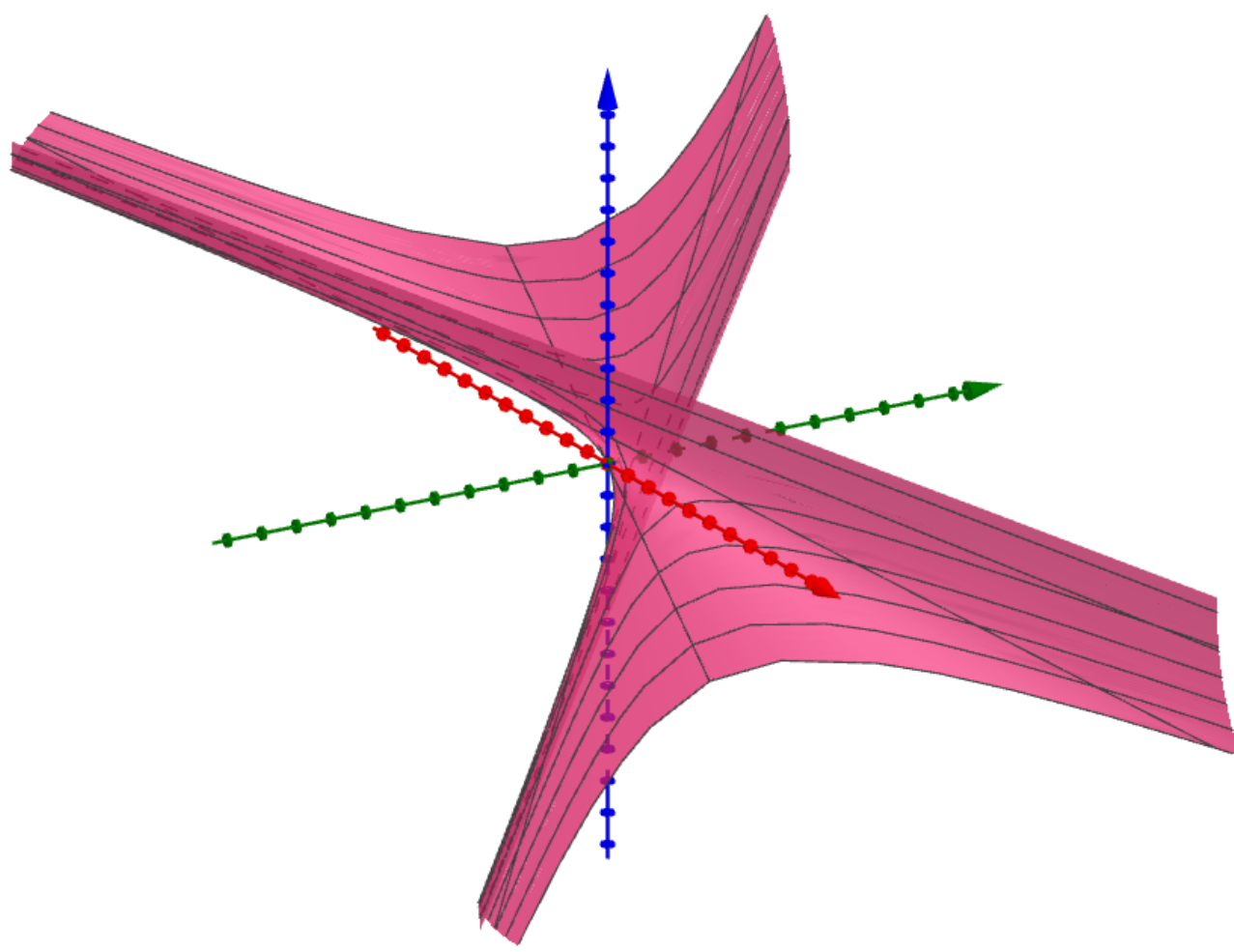}
 \caption{Example of minimal null scrolls constructed from ruling valued in a hyperbola (right)
 and the image of its ruling (left, dash line).}
 \end{figure}
\end{Example}
\begin{Example}[Parabola ruling]\label{parabola ruling}
Set a ruling $\widetilde B=\left( \frac18 s^2 + \frac12 \right) e_1+ \frac12 s e_2 +\left( \frac18 s^2 -\frac12 \right) e_3$.
Then we have $\beta=\frac12$.
This describes a parabola (left of Figure $5$).
By Proposition \ref{betaneq0} with a constant $b$,
the velocity of the base curve is given by
\begin{equation*}
 A=
 \left( -\frac{b^2}{16} s^2 -\frac{b}2 s -\frac{b^2}4 -1 \right) e_1
 -\left( \frac{b^2}4 s + b \right) e_2
 +\left( \frac{b^2}{16} s^2 + \frac{b}2 s - \frac{b^2}4 +1 \right) e_3.
\end{equation*}
Therefore the base curve is computed as
\begin{equation*}
 \gamma(s)=
 \left(
 -\frac{b^2}{48} s^3 - \frac{b}4 s^2 - \left( \frac{b^2}4  +1 \right)s,
 -\frac{b^2}8 s^2 -bs,
 -\frac{b^4}{3840} s^5 - \frac{b^3}{192} s^4 + \frac{b^4}{192} s^3 + \frac{b}4 s^2 + \left( 1-\frac{b^2}4 \right)s
 \right).
\end{equation*}
Hence the minimal null scroll $f(s, t)$ is given by
\begin{equation*}
 f(s, t)=
 \begin{pmatrix}
 -\frac{b^2}{48} s^3 - \frac{b}4 s^2 - \left( \frac{b^2}4  +1 \right)s +t \left( \frac18 s^2 + \frac12 \right),\\
 -\frac{b^2}8 s^2 -bs + \frac{st}2,\\
 -\frac{b^4}{3840} s^5 - \frac{b^3}{192} s^4 + \frac{b^4}{192} s^3 + \frac{b}4 s^2 + \left( 1-\frac{b^2}4 \right)s
 +t\left( \frac{b^2}{384} s^4 - \left( \frac18 + \frac{b^2}{32} \right)s^2 + \frac{b}4 s - \frac12 \right)
 \end{pmatrix}^\top.
\end{equation*}
If $b=0$, then the minimal null scroll $f$ lies on a plane in $\mathbb{R}^3$ (right of Figure $5$),
and this is also an example of horizontal umbrellas (see Example \ref{example_horizontalumbrella}).
\begin{figure}[h]
 \centering
 \includegraphics[width=5cm]{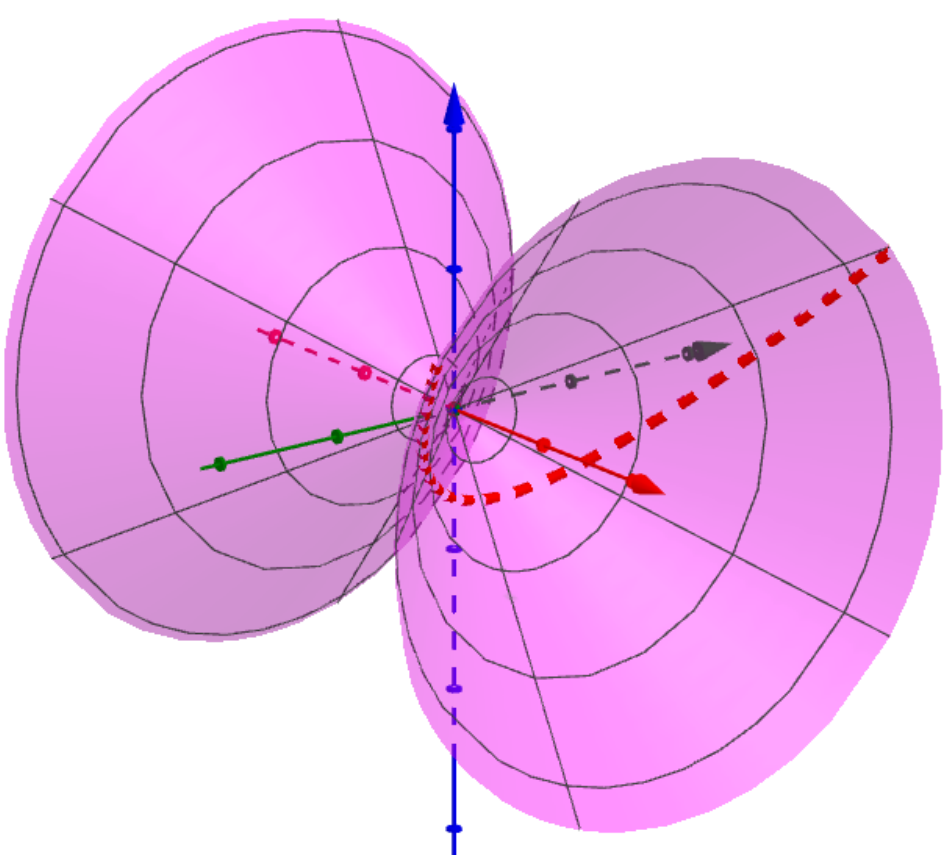}
 \includegraphics[width=5cm]{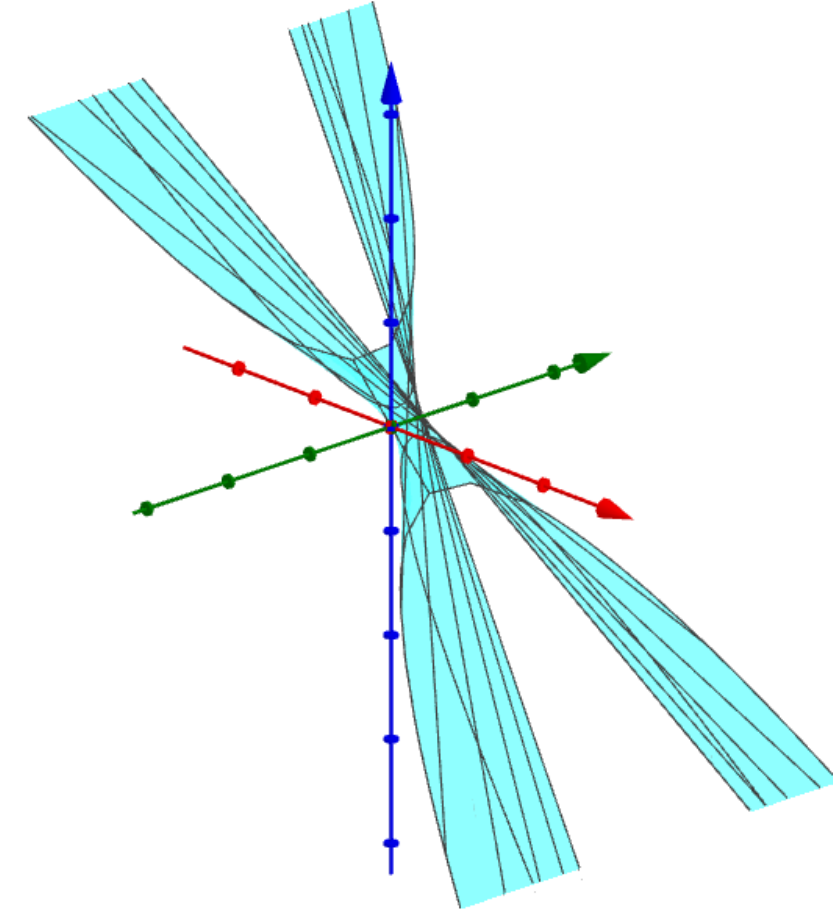}
 \caption{Example of minimal null scrolls constructed from ruling valued in a hyperbola (right)
 and the image of its ruling (left, dash line).}
 \end{figure}
\end{Example}


\subsection{Construction from the condition $g(A, \widetilde B) =0$}
Finally, we consider the case of $g(A, \widetilde B) =0$.
In this case,
obviously,
the rulings don't impose any conditions other than linear dependence on the velocity of the base curve.
Thus we have the proposition.
\begin{Proposition}\label{g(A, widetilde B) = 0}
 Let $\gamma(s) \cdot \exp( t \widetilde B(s) )$ be a minimal null scroll
 which satisfy the minimality condition $g(\gamma^{-1} \frac{d \gamma}{ds}, \widetilde B)=0$.
 Then it follows that $\beta \neq 0$ and $B^3 \neq 0$.
 Conversely,
 for a curve $\widetilde B$ in the light cone in $\mathfrak{nil}_3$
 and a real valued function $h$,
 define a null vector field $A=\sum_{i=1}^3 A^ie_i$ by $A=h\widetilde B$,
 and $\gamma$ as the curve which has the velocity $A$.
 If $\beta \neq 0$ and $B^3 \neq 0$ hold,
 then $\gamma(s) \cdot \exp( t \widetilde B(s) )$ is a minimal null scroll on $t\neq0$.
\end{Proposition}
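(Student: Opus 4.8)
\emph{Proof proposal.} The plan is to derive both implications from the explicit expression for $g_{12}$ computed in Section~\ref{sec:characterization} together with the minimality criterion of Theorem~\ref{thm minimal condition}; no genuinely new computation is needed, only the substitution coming from $g(A,\widetilde B)=0$.

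For the direct statement I would start from a minimal null scroll $f(s,t)=\gamma(s)\cdot\exp(t\widetilde B(s))$ satisfying $g(A,\widetilde B)=0$, where $A=\gamma^{-1}\frac{d\gamma}{ds}$. Both $A$ and $\widetilde B$ are $g$-null, and in the three-dimensional Lorentzian vector space $\mathfrak{nil}_3$ two $g$-orthogonal null vectors are necessarily parallel; hence $A=h\widetilde B$ for some function $h$, which in particular gives $A^1B^2-A^2B^1=0$. Feeding $g(A,\widetilde B)=0$ and $A^1B^2-A^2B^1=0$ into the formula for $g_{12}$ annihilates its $t^0$- and $t^1$-terms, and, using the identity $B^2{B^1}'-B^1{B^2}'=\beta B^3$ (the $e_3$-component of $\widetilde B\times\widetilde B'=-\beta\widetilde B$ from Lemma~\ref{lemmincondition}), one is left with
\[
 g_{12}=\frac{t^2}{2}\,\beta\,(B^3)^2 .
\]
Since a null scroll is by definition a non-degenerate timelike surface, $g_{12}$ vanishes nowhere on its domain; as $g_{12}$ vanishes identically along $t=0$, that domain lies in $\{t\neq0\}$, and because $\beta$ and $B^3$ depend only on $s$ this forces $\beta\neq0$ and $B^3\neq0$.

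For the converse, I would take $\widetilde B$ valued in the light cone of $\mathfrak{nil}_3$, a real function $h$, set $A=h\widetilde B$ (tacitly $h$, hence $A$, nonvanishing, so that $\gamma$ is an honest curve), and let $\gamma$ be the curve with velocity $A$. Then $g(A,A)=h^2\,g(\widetilde B,\widetilde B)=0$, so $\gamma$ is a null curve, and $g(A,\widetilde B)=h\,g(\widetilde B,\widetilde B)=0$, so once more $A^1B^2-A^2B^1=0$ and the same substitution yields $g_{12}=\frac{t^2}{2}\beta(B^3)^2$. Under the hypotheses $\beta\neq0$ and $B^3\neq0$ this is nonzero for every $t\neq0$; since also $g_{22}=0$, the map $\gamma(s)\cdot\exp(t\widetilde B(s))$ restricted to $\{t\neq0\}$ is a non-degenerate surface of Lorentzian signature, hence a genuine null scroll there. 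Finally $g(A,\widetilde B)=0$ is exactly one of the two equivalent minimality conditions of Theorem~\ref{thm minimal condition}, so this null scroll is minimal.

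The argument is essentially bookkeeping, and I do not foresee any analytic obstacle: once $g(A,\widetilde B)=0$ collapses $g_{12}$ to its $t^2$-term, everything reduces to Lemma~\ref{lemmincondition} and Theorem~\ref{thm minimal condition}. The only points that need care are recording that the words ``null curve'' and ``null scroll'' are not vacuous here (the nonvanishing of $h$ and $\widetilde B$, automatic wherever $g_{12}\neq0$) and being precise that the conclusion concerns the open set $\{t\neq0\}$ rather than a neighbourhood of $t=0$, where the induced form degenerates totally.
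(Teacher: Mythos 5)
Your proposal is correct and takes essentially the same route as the paper's proof: deduce $A=h\widetilde B$ from $g(A,\widetilde B)=0$, collapse $g_{12}$ to $\tfrac{t^2}{2}\beta(B^3)^2$ via $A^1B^2-A^2B^1=0$ and $B^2{B^1}'-B^1{B^2}'=\beta B^3$, read off $\beta\neq0$, $B^3\neq0$, $t\neq0$ from non-degeneracy, and invoke Theorem~\ref{thm minimal condition} for the converse. The only difference is that you make explicit the standard fact that two orthogonal null vectors in a three-dimensional Lorentzian space are parallel, which the paper uses without comment.
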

\begin{proof}
 When $g(A, \widetilde B)=0$ holds,
 $A=h\widetilde B$ holds for some function $h$.
 Then we obtain
 \begin{equation*}
 A^1B^2-A^2B^1 =0.
 \end{equation*}
 Thus the non-degeneracy of null scrolls means
 \begin{equation*}
  g_{12} = t^2 \frac12 \beta (B^3)^2
 \end{equation*}
 vanishes nowhere,
 that is,
 $\beta \neq 0$, $B^3 \neq 0$, and $t \neq0$.
 Conversely,
 let $\gamma(s) \cdot \exp( t \widetilde B(s) )$ be a map
 which satisfies the condition
 $\gamma^{-1} \frac{d \gamma}{ds} = h \widetilde B$ for some function $h$.
 Obviously, $g(A, \widetilde B)=0$ holds.
 Assumptions $\beta\neq0$ and $B^3\neq0$ imply the non-degeneracy of the first fundamental form on $t\neq0$,
 and then the map $\gamma(s) \cdot \exp( t \widetilde B(s) )$ is a minimal null scroll on $t \neq0$.
\end{proof}
If the minimality condition $g(A, \widetilde B)=0$ holds, that is, $A=h\widetilde B$,
the minimal null scroll $\gamma(s) \cdot \exp( t \widetilde B(s)) = \gamma(s) \cdot \exp(\tilde{t} A(s))$ is of the form
\[
 \gamma(s) \cdot \exp(t \widetilde B(s))
 =(\gamma^1(s)+\tilde t{\gamma^1}'(s), \gamma^2(s)+\tilde t{\gamma^2}'(s), \gamma^3(s)+\tilde t{\gamma^3}'(s) ).
\]
Hence all the minimal null scrolls which satisfy the minimality condition $g(A, \widetilde B)=0$ are given by the following Example.
\begin{Example}[Tangent surfaces]
 Let $\gamma(s)=({\gamma^1}(s), {\gamma^2}(s), {\gamma^3}(s))$ be a curve in (semi-)Euclidean space $\R^3$.
 A surface $f(s, t)$ in $\mathbb{R}^3$ is said to be a {\it tangent surface} on $\gamma$ if
 $f$ is a ruled surface over $\gamma$, and its ruling is the velocity of $\gamma$,
 that is,
 the surface of the form
 \begin{align}
  f(s, t)
  &=\gamma(s)+t\gamma'(s)\notag\\
  &=(\gamma^1(s)+t{\gamma^1}'(s), \gamma^2(s)+t{\gamma^2}'(s), \gamma^3(s)+t{\gamma^3}'(s) ).\label{tangentsurface}
 \end{align}
 Let us regard $\gamma$ as a curve in $\Nil$.
 When $\gamma$ is null with respect to the metric $g$ on $\Nil$,
 $\gamma^{-1} \frac{d\gamma}{ds} \times (\gamma^{-1} \frac{d\gamma}{ds})' \neq 0$,
 and $g( \gamma^{-1} \frac {d\gamma}{ds}, e_3 ) \neq 0$,
 we can see the map defined in \eqref{tangentsurface} is a null scroll in $\Nil$.
 Moreover, it is minimal.
 In fact \eqref{tangentsurface} can be represented into
 \begin{equation*}
  \gamma(s) \cdot \exp ( t ( \gamma^{-1} \frac{d\gamma}{ds}(s) ) ).
 \end{equation*}
  Since $\gamma$ is null in $\Nil$ and Proposition \ref{g(A, widetilde B) = 0},
  tangent surface $f$ in $\R^3$ is a minimal null scroll in $\Nil$. 
\end{Example}
Propositions \ref{betaeq0}, \ref{betaneq0} and \ref{g(A, widetilde B) = 0} can be summarised
to obtain the construction theorem of minimal null scrolls with prescribed null rulings.
\begin{Theorem}\label{thmmain2}
 Let $\widetilde B= \sum_{i=1}^3 B^ie_i$ be a curve which takes values in the light cone in $\mathfrak{nil}_3$ except for the origin,
 and satisfies $\widetilde B \times \widetilde B' = -\beta \widetilde B$ with $\beta=0$ or $1/2$.
 Define a vector field $A$ if $\beta=0$ and $B^3 \neq 0$ as
 \begin{equation*}
  A = \alpha B,
 \end{equation*}
 and if $\beta =1/2$ as
 \begin{equation*}
  A
  = -\frac12b^2 B - 2b B' -4 \left( 2g(B'',B'')B + B'' \right).
 \end{equation*}
 Or else, if $\beta=1/2$ and $B^3 \neq 0$, define $A$ as 
 \begin{equation*}
 A= \alpha \widetilde B.
 \end{equation*}
 Here, $B= B^1e_1 + B^2e_2 - B^3e_3$,
 $b$ is an arbitrary function
 and $\alpha$ is a nowhere vanishing function.
 Moreover, let $\gamma$ be the curve in $\Nil$ which has the velocity $A$.
 Then the map $\gamma(s) \cdot \exp(t \widetilde B(s))$ locally defines a minimal null scroll.
\end{Theorem}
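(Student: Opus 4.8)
The plan is to recognise that Theorem~\ref{thmmain2} is nothing more than a consolidation of Propositions~\ref{betaeq0}, \ref{betaneq0} and \ref{g(A, widetilde B) = 0}, organised according to the normalised value of $\beta$ and the prescription chosen for $A$. So I would argue case by case. In every case I would integrate the prescribed $A$ (say from $\gamma(0)=(0,0,0)$) to produce $\gamma$, check that $\gamma$ is a (regular) null curve so that $\gamma(s)\cdot\exp(t\widetilde B(s))$ really is a null scroll, verify that one of the two minimality conditions of Theorem~\ref{thm minimal condition} holds, and confirm that $g_{12}$ is non-zero on a suitable open set; the word ``locally'' in the statement is present precisely to absorb the restriction to $\{g_{12}\neq 0\}$.

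First I would treat $\beta=0$. Since $\widetilde B$ never vanishes and $\widetilde B\times\widetilde B'=-\beta\widetilde B=0$, the vectors $\widetilde B$ and $\widetilde B'$ are linearly dependent, so $\widetilde B=e^{\int k\,ds}\sum_i c^i e_i$ for constants $c^i$ with $-(c^1)^2+(c^2)^2+(c^3)^2=0$; then $B=e^{\int k\,ds}(c^1 e_1+c^2 e_2-c^3 e_3)$ and $A=\alpha B=\alpha e^{\int k\,ds}(c^1 e_1+c^2 e_2-c^3 e_3)$ has constant direction, so $\gamma$ is an affine null line and $B$ is linearly dependent on $\gamma^{-1}\frac{d\gamma}{ds}$. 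This is exactly the data of Proposition~\ref{betaeq0}, which gives minimality (equivalently $g(A,B)=0=2\beta$ in Theorem~\ref{thm minimal condition}); the Remark following that proposition exhibits $g_{12}=-2\alpha\,e^{2\int k\,ds}(c^3)^2$, which is non-zero precisely because $B^3\neq 0$ (i.e. $c^3\neq 0$) and $\alpha$ is nowhere vanishing, so the map is a genuine timelike surface. Next, for $\beta=1/2$ with $A=-\frac12 b^2 B-2bB'-4(2g(B'',B'')B+B'')$, I would simply note that expanding this recovers the vector field $-4(2g(B'',B'')B+B'')-2b(B'+\frac b4 B)$ appearing in the converse half of Proposition~\ref{betaneq0}, which then yields at once that $\gamma$ is null, that $g(A,B)=1=2\beta$, and that the scroll is minimal. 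Finally, for $\beta=1/2$ with $B^3\neq 0$ and $A=\alpha\widetilde B$, I would check that $g(A,A)=\alpha^2 g(\widetilde B,\widetilde B)=0$ and $g(A,\widetilde B)=\alpha g(\widetilde B,\widetilde B)=0$, so $\gamma$ is null and the condition $g(A,\widetilde B)=0$ of Theorem~\ref{thm minimal condition} holds; Proposition~\ref{g(A, widetilde B) = 0} then supplies $g_{12}=\frac12 t^2\beta(B^3)^2$, non-zero on $t\neq 0$, so the scroll is a timelike surface there. Assembling the three cases is the whole proof.

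I do not expect a genuine obstacle here, since all of the analytic content has already been established in the three propositions. The only thing that needs care is the bookkeeping of non-degeneracy: it is this that makes the constructed maps genuine timelike surfaces, hence bona fide null scrolls, and it is also what forces the statement to be local — in particular it forces the branch $A=\alpha\widetilde B$ to avoid $t=0$. A secondary point of care is matching the two $\beta=1/2$ formulas for $A$ with their counterparts in Propositions~\ref{betaneq0} and \ref{g(A, widetilde B) = 0} up to the sign and rescaling conventions used there.
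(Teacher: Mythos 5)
Your proposal is correct and is essentially the paper's own argument: the paper offers no separate proof of Theorem \ref{thmmain2}, stating only that it is obtained by summarising Propositions \ref{betaeq0}, \ref{betaneq0} and \ref{g(A, widetilde B) = 0}, which is exactly the case-by-case reduction you carry out. Your extra care about where $g_{12}\neq 0$ (hence where the map is a genuine timelike surface, which is what the word ``locally'' absorbs) is consistent with, and slightly more explicit than, the paper's treatment.
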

From Propositions \ref{betaeq0}, \ref{betaneq0} and \ref{g(A, widetilde B) = 0},
any minimal null scroll can be constructed locally by the Theorem \ref{thmmain2}.
We would like to note that
Theorem \ref{thmmain} implies
the class of minimal null scrolls with the minimality condition $g(A, \widetilde B)=0$
is included in the one with the minimality condition $g(A, B)=2\beta$ and $\beta \neq 0$.
However,
minimal null scrolls of the former class cannot be found in general through Theorem \ref{thmmain}
because no special features of the Abresch-Rosenberg differential and the support functions
are expected.


\bibliographystyle{plain}
\def\cprime{$'$}

\end{document}